\numberwithin{equation}{section}
\newtheorem{theorem}{Theorem}[section]
\newtheorem{corollary}[theorem]{Corollary}
\newtheorem{lemma}[theorem]{Lemma}
\newtheorem{proposition}[theorem]{Proposition}
\theoremstyle{definition}
\newtheorem{definition}[theorem]{Definition}
\newtheorem{example}[theorem]{Example} 
\theoremstyle{remark}
\newtheorem{claim}{Claim}
\DeclareMathOperator{\Homeo}{Homeo}
\DeclareMathOperator{\Ph}{Ph}
\DeclareMathOperator{\im}{im}
\DeclareMathOperator{\dom}{dom}
\DeclareMathOperator{\id}{id}
\DeclareMathOperator{\diam}{diam}
\DeclareMathOperator{\lcm}{lcm}
\DeclareMathOperator{\Eq}{Eq}
\newcommand{\acts}{\curvearrowright}
\begin{document}

\title{Chaos for foliated spaces and pseudogroups}
\author{Ram\'on Barral Lij\'o}
\address{E.T.S.~de Ingenieros Inform\'aticos, Universidad Polit\'ecnica de Madrid, 28660 Madrid, Espa\~na}
\email{ramon.barral@upm.es}
\thanks{The author is partially supported by grants PID2020-114474GB-I00 (AEI/FEDER, UE), ED431C 2019/10 (Xunta de Galicia, FEDER), and by the Research Promotion Program for Acquiring Grants in-Aid for Scientific Research (KAKENHI) by Ritsumeikan University.}
\thanks{MSC 2020: 37B02, 37B05.}
\thanks{Keywords: chaos, sensitivity, foliation, foliated space, Devaney chaos, equicontinuity, pseudogroup, periodic orbits, lamination}
\thanks{Declarations of interest: none}

\begin{abstract}
	We generalize  ``sensitivity to initial conditions'' to foliated spaces and pseudogroups, offering  a  definition of Devaney chaos in this setting. In contrast to the case of group actions, where sensitivity follows from the other two conditions of Devaney chaos, we show that this is true only for compact foliated spaces, exhibiting a counterexample in the non-compact case. Finally, we obtain an analogue of the Auslander--Yorke dichotomy for compact foliated spaces and compactly generated pseudogroups.
\end{abstract}

\maketitle

\section{Introduction}

There are several definitions of chaos for dynamical systems (Li--Yorke chaos, positive entropy, ...), but in this article we will consider only Devaney's, first introduced in~\cite{Devaney}.

\begin{definition}[Devaney chaos] \label{d:dev}
	A continuous map $f\colon X\to X$ on a metric space $(X,d)$ is \emph{chaotic} if,
	\begin{enumerate}[(i)]
		\item \label{i:devtt}
		for all non-empty open $U,V\subset X$, there is $n\geq 0$ such that
		\[
		f^n(U)\cap V\neq \emptyset
		\]
		($f$ is \emph{topologically transitive}),
		\item \label{i:devper}the set of periodic points is dense in $X$  ($f$ has \emph{density of periodic points}),	and	
		\item \label{i:devsens} there is $c>0$ such that, for every $x\in X$ and $r>0$, there are $y\in B(x,r)$ and $n\geq 0$ satisfying 
		\[
		d(f^n(x),f^n(y))\geq c
		\] ($f$ is \emph{sensitive to initial conditions}).
	\end{enumerate}
\end{definition}

This definition can be readily adapted for group actions $G\acts X$ by substituting $g\in G$ in place of $f^n$ ($n\in \mathbb{N}$\footnote{We will adhere to the convention where $0\in\mathbb{N}$.}) above.
Topological transitivity conveys the indecomposability of the dynamical system, whereas~(\ref{i:devper}), according to Devaney himself, provides ``an element of regularity''~\cite[p.~50]{Devaney}. Sensitivity to initial conditions, for its part, expresses what is commonly known as the ``butterfly effect''. This rough sketch may lead to the impression that~(\ref{i:devsens}) alone imbues this definition with its chaotic nature; suprisingly, it was proved later that this condition is, in fact, redundant.
\begin{theorem}[{\cite{BBCDS}}]\label{t:bbcds}
	If a continuous map $f\colon X\to X$ on a metric space $(X,d)$ satisfies~(\ref{i:devtt}) and~(\ref{i:devper}), then it also satisfies~(\ref{i:devsens}).
\end{theorem}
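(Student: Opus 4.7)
The plan is to extract a universal sensitivity constant from the dynamics itself, by exploiting the existence of two disjoint periodic orbits. Assuming $X$ has more than one periodic orbit (the single-orbit case is degenerate and customarily excluded), I fix two distinct periodic orbits $O_1, O_2$ and set $8\delta := d(O_1, O_2) > 0$. A one-line triangle inequality argument then shows that for every $x \in X$, at least one of $O_1, O_2$ lies entirely at distance $\geq 4\delta$ from $x$; in particular, each $x$ admits a periodic point $q = q(x)$ whose entire orbit is $4\delta$-far from $x$. My candidate sensitivity constant is $c := \delta$.

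Given $x \in X$ and $r > 0$, which I may shrink so that $r < \delta$, I next invoke~(\ref{i:devper}) to produce a periodic point $p \in B(x,r)$ of some period $N$, and then~(\ref{i:devtt}) to produce $y \in B(x, r)$ whose orbit enters the open neighborhood
\[
V := \bigcap_{i=0}^{N-1} f^{-i}\bigl(B(f^i(q), \delta)\bigr)
\]
of $q$ at some time $k$. The central trick is to pick $n$ as the smallest multiple of $N$ with $n \geq k$: then $f^n(p) = p$ is $r$-close to $x$, while $0 \leq n - k < N$ forces $f^n(y) \in B(f^{n-k}(q), \delta)$, hence at least $3\delta$ away from $x$. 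Two applications of the triangle inequality then yield $d(f^n(p), f^n(y)) > 2\delta$, and inserting $f^n(x)$ as an intermediate point gives the dichotomy $d(f^n(p), f^n(x)) > \delta$ or $d(f^n(x), f^n(y)) > \delta$; whichever side wins, the corresponding member of $\{p, y\}$ witnesses sensitivity at $x$ with constant $\delta$.

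The main conceptual obstacle is the very first step: the sensitivity constant must be \emph{uniform} in $x$, while both hypotheses of the theorem are local in nature. Anchoring $\delta$ on a single pair of periodic orbits (instead of selecting a distant periodic point separately for each $x$) is precisely what produces this uniformity. The only other delicate point is to resist committing prematurely to either $p$ or $y$ as the final witness: the end-of-proof case split is what selects which one of them works.
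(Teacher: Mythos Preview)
Your argument is correct and is precisely the original proof of Banks, Brooks, Cairns, Davis, and Stacey~\cite{BBCDS}; the paper itself does not give a proof of this theorem but merely quotes it as a known result, so there is nothing to compare against. The only caveat worth recording is the one you already flag: the hypothesis implicitly rules out the degenerate case of a single (hence finite, hence equal to $X$) periodic orbit, which is exactly the assumption that $X$ be infinite made in~\cite{BBCDS}.
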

This result was later generalized to topological group and semigroup actions~\cite{Kontorovich,SKBS}. The reader should bear in mind that these results hold even when the phase space is not compact.

If the local behaviour of $f$ around a point $x$ is not sensitive to initial conditions, then there is an assignment $\epsilon\mapsto\delta(\epsilon)$ such that 
\[
d(x,y)<\delta(\epsilon)\quad\Longrightarrow\quad d(f^nx,f^ny)<\epsilon \qquad\text{for every}\quad y\in X,\ n\in\mathbb{N},
\]
and we say that $x$ is a \emph{point of equicontinuity}. If the set of points of equicontinuity is dense in $X$, we call the dynamical system  \emph{almost equicontinuous}; if every point is of equicontinuity with the same modulus $\epsilon\mapsto \delta(\epsilon)$, then the system is \emph{equicontinuous}. This rough opposition between chaos and equicontinuity is rigorously formulated
 by the Auslander--Yorke dichotomy.
	
	\begin{theorem}[{Auslander--Yorke dichotomy~\cite[Cor.~2]{AuslanderYorke}}]\label{t:auslanderyorkemin}
		Let $X$ be a compact space and let $f\colon X\to X$ be a continuous map such that $(X,f)$ is minimal. Then $f$ is either equicontinuous or sensitive to initial conditions.
	\end{theorem}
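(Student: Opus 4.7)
The plan is to prove the contrapositive: assuming that $f$ is not sensitive to initial conditions, I will establish that $f$ is equicontinuous. The main objects to study are, for each $\epsilon>0$,
\[
	E_\epsilon = \bigl\{x \in X : \exists\, \delta > 0 \text{ such that } \forall y \in B(x,\delta),\ \forall n \geq 0,\ d(f^n x, f^n y) < \epsilon\bigr\},
\]
which are open sets whose intersection $\bigcap_{k\geq 1} E_{1/k}$ is precisely the set of equicontinuity points of $f$. The failure of sensitivity with constant $c=\epsilon$ directly supplies a point of $E_\epsilon$, so each $E_\epsilon$ is non-empty.

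The first substantive step is to prove the invariance $f^{-k}(E_\epsilon)\subseteq E_\epsilon$ for all $k\geq 0$. Given $y$ with $z=f^k(y)\in E_\epsilon$ of modulus $\delta_z$, continuity of $f^k$ at $y$ yields a neighborhood of $y$ whose $f^k$-image lies in $B(z,\delta_z)$; this controls $d(f^m y, f^m y')$ for all $m\geq k$, and the finitely many remaining iterates $m=0,\dots,k-1$ are handled by further shrinking using continuity of each $f^m$ at $y$.

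With the invariance in hand, minimality closes the argument: for any $x\in X$ the forward orbit is dense in $X$, hence meets the non-empty open set $E_\epsilon$ at some iterate $k$, so $x\in f^{-k}(E_\epsilon)\subseteq E_\epsilon$. Thus $E_\epsilon=X$ for every $\epsilon>0$, which makes $f$ pointwise equicontinuous. On the compact space $X$, a standard open-cover argument upgrades this to uniform equicontinuity: cover $X$ by finitely many balls $B(x_i,\delta_{x_i}/2)$ where $\delta_{x_i}$ witnesses $x_i\in E_{\epsilon/2}$, and set $\delta=\min_i \delta_{x_i}/2$.

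I expect the invariance step to be the main technical point: everything else reduces to openness of $E_\epsilon$, density of forward orbits under minimality, and the finite subcover. The invariance relies crucially on the iteration structure of a single continuous map, which is precisely what must be replaced by a more delicate word-by-word argument over generators when one tries to adapt the dichotomy to the foliated and pseudogroup settings pursued later in the paper.
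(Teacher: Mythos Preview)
The paper does not give its own proof of this classical result; it is cited from Auslander--Yorke and serves only as motivation for the pseudogroup analogue, Theorem~\ref{t:auslanderyorkepseudo}. Your argument is the standard one and is close in spirit to the paper's pseudogroup proof: there one picks, for each scale $1/n$, a ``good'' point $x_n$ coming from the failure of sensitivity, spreads its neighbourhood forward under the dynamics to obtain an open set $V_n$, and observes that minimality forces $V_n$ to be everything. Your backward-invariance step $f^{-k}(E_\epsilon)\subseteq E_\epsilon$ is the single-map avatar of that spreading, and your final compactness upgrade matches the role of Lemma~\ref{l:uniformity}.

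One small slip: the sets $E_\epsilon$ as you define them are \emph{not} open in general. If $x\in E_\epsilon$ with witness $\delta$ and $z\in B(x,\delta/2)$, then for $w\in B(z,\delta/2)\subset B(x,\delta)$ the triangle inequality only gives $d(f^nz,f^nw)\le d(f^nz,f^nx)+d(f^nx,f^nw)<2\epsilon$, so you land in $E_{2\epsilon}$, not $E_\epsilon$. The fix is immediate and does not disturb the structure of your proof: invoke the failure of sensitivity at scale $\epsilon/2$ to obtain a point of $E_{\epsilon/2}$, so that the argument above produces a non-empty \emph{open} subset of $E_\epsilon$; this open set is all that the minimality step requires. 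Alternatively, replace $E_\epsilon$ throughout by the genuinely open set $\{x:\exists\,\delta>0,\ \diam f^n(B(x,\delta))<\epsilon\ \forall n\ge 0\}$.
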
 
	
After this brief review, we can state the aim of the present paper: to study topological chaos for foliations and their generalization, foliated spaces; as we will see, this requires considering pseudogroups too. Recall that a foliated space is a topological generalization of a foliation where the choice of local transversal models is not restricted to manifolds: they are only required to be Polish spaces (see Section~\ref{ss:foliated}). The Smale--Williams attractor provides an example of a foliated space that is not a foliation---it is locally homeomorphic to the product of the real line and the Cantor set.

Thus, our work fits into the broader field of topological dynamics for foliated spaces, which has received much attention of late. The most studied foliation dynamics are the equicontinuous, featuring the celebrated tools of Molino theory for \emph{Riemannian foliations}~\cite{Molino}. Equicontinuity was generalized to foliated spaces and pseudogroups in~\cite{Sacksteder, Molino, Kellum, Tarquini, AlvarezCandel} with varying degrees of generality, the methods of~\cite{AlvarezCandel} in particular being a main source of inspiration for this paper. Molino theory itself has also been generalized in~\cite{AlvarezMoreira, DyerHurderLukina2016,DyerHurderLukina}, giving rise to the study of \emph{wild} solenoids and Cantor actions, which have a complicated interplay between local and global behavior~\cite{HurderLukina,HurderLukina2021,Lukina, AlvarezBarralLukinaNozawa}. There has been some recent work on complex dynamics, concerning Fatou--Julia decompositions for holomorphic foliations~\cite{GhysGomezSaludes,
	Asuke2013}.

In order to analyze foliated spaces from a dynamical point of view, we regard them as generalized dynamical systems where the leaves play the role of the orbits; as the title of~\cite{Churchill} reads, they are dynamical systems ``in the absence of time.'' We may identify the paper on  topological entropy for foliations by Ghys, Langevin, and Walczak~\cite{GLW} as the first study on chaotic foliations, even though the word ``chaos'' is never mentioned. In fact, it looks like the term ``chaotic foliation'' has only appeared twice in the literature; its debut was in the context of general relativity, where Churchill was trying to provide a definition of chaos invariant by relativistic reparametrizations of time:

\begin{definition}[{Churchill chaos~\cite{Churchill}}]
	A foliation is chaotic if
	\begin{enumerate}[(i)]
		\item \label{i:churchilltt}there is a dense leaf,
		\item \label{i:churchilldpp}the set of compact leaves is dense, and
		\item \label{i:churchillnontrivial}there are at least two different leaves.
	\end{enumerate}
\end{definition}

Items~(\ref{i:churchilltt}) and~(\ref{i:churchilldpp}) correspond to~\ref{d:dev}(\ref{i:devtt})--(\ref{i:devper}), whereas~(\ref{i:churchillnontrivial}) avoids the trivial scenario where the foliation consists of a single leaf.
Churchill did not include a foliation-theoretic definition of sensitivity: only foliations by curves arising from a flow were considered.

More recently, Bazaikin, Galaev, and Zhukova have provided the following definition of chaos for foliations:
\begin{definition}[Bazaikin--Galaev--Zhukova chaos~\cite{BGZ}]
	A foliation is chaotic if
	\begin{enumerate}[(i)]
		\item there is a dense leaf and
		\item the set of closed leaves is dense.
	\end{enumerate}
\end{definition}
They have used this definition to study chaos for Cartan foliations, relating it to conditions in their holonomy pseudogroups and global holonomy groups. Of course, their definition coincides with Churchill's when the ambient manifold is compact and there are at least two different leaves. Again, it does not take into account sensitivity to initial conditions. 

Regarding examples of chaotic foliated spaces (according to the definitions above), besides those appearing in~\cite{Churchill,BGZ}, the author was involved in the recent study of a hyperbolic version of the cut-and-project method of tiling theory~\cite{ABHNP}. This yields Delone subsets of $\mathbb{R}$ whose continuous hulls, which are naturally foliated spaces, are chaotic with respect to the natural action by translations.

This discussion motivates the first contribution of the present paper: we introduce a suitable definition of sensitivity to initial conditions for foliated spaces. We phrase this definition in terms of \emph{holonomy pseudogroups}, which have long been used as dynamical models for foliations. A pseudogroup in a topological space $X$ is a collection of homeomorphisms between open subsets of $X$ containing the identity and closed under composition, inversion, restriction, and combination of partial maps (see Section~\ref{s:pseudogroup}). H.~Nozawa and the author have developed a slightly different dynamical model in order to define sensitivity and Devaney chaos for closed saturated subsets of the Gromov space of pointed colored graphs~\cite{BarralNozawa}. These subsets resemble singular foliations by graphs and do not admit a holonomy pseudogroup in the usual sense. On a very related note, Flores and M\v{a}ntoiu have recently studied the topological dynamics of groupoid actions.

After some preliminary results in Section~\ref{s:preliminaries}, we discuss our definition of sensitivity for pseudogrous in Section~\ref{ss:sensitivityandchaos}:  showing first why a naive approach fails, we follow the ideas present in~\cite{AlvarezCandel} in order to arrive at Definition~\ref{d:sensitivity}.
We also provide definitions for almost equicontinuity and density of periodic orbits, making use of the latter to define Devaney chaos as follows:
\begin{definition}[Devaney chaos for pseudogroups]
	A pseudogroup $\mathcal G\acts X$ is \emph{chaotic} if it is topologically transitive, has density of periodic orbits, and is sensitive to initial conditions.
\end{definition}

 The next results test whether this new definition of sensitivity constitutes a satisfactory generalization of the original one. We start by examining pseudogroups generated by group actions:
\begin{theorem}\label{t:sicactionpseudogroup}
	If $G$ is a finitely generated group acting on a compact Polish space $X$, then the action is sensitive to initial conditions if and only if the pseudogroup generated by the action is.
\end{theorem}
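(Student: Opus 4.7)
The proof splits into two implications; the forward one is essentially immediate, while the reverse is where the real content lies.

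For the forward direction, assume $G\acts X$ is sensitive with constant $c$. Given any $x\in X$ and any neighborhood $V$ of $x$, choose $r>0$ with $B(x,r)\subset V$ and apply group sensitivity to produce $y\in B(x,r)\subset V$ and $g\in G$ with $d(g(x),g(y))\geq c$. Since $g\colon X\to X$ is a global homeomorphism, its restriction $g|_V$ lies in $\mathcal{G}$ and witnesses pseudogroup sensitivity at $x$ with the same constant $c$.

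For the reverse direction, assume $\mathcal{G}$ is sensitive with constant $c$. Given $x\in X$ and $r>0$, apply pseudogroup sensitivity with $V=B(x,r)$ to obtain $y\in B(x,r)$ and $h\in\mathcal{G}$ with $x,y\in\dom(h)$ and $d(h(x),h(y))\geq c$. The key structural fact is that every element $h\in\mathcal{G}$ generated by a $G$-action on $X$ decomposes as a combination of restrictions of group elements: there is an open cover $\{U_\alpha\}$ of $\dom(h)$ and elements $g_\alpha\in G$ with $h|_{U_\alpha}=g_\alpha|_{U_\alpha}$. In particular, near $x$ the map $h$ coincides with some $g_x\in G$ on an open neighborhood $W_x\ni x$. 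If $y$ happens to lie in $W_x$ as well, then $h(y)=g_x(y)$ and we immediately obtain $d(g_x(x),g_x(y))\geq c$, giving group sensitivity with the same constant.

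The main obstacle, therefore, is to force the configuration $y\in W_x$: the neighborhood $W_x$ depends on the $h$ produced by pseudogroup sensitivity, which in turn depends on the neighborhood chosen, creating a chicken-and-egg situation. My plan is to iterate pseudogroup sensitivity along a decreasing family $V_n\subset B(x,r)$, where each $V_n$ is chosen inside the $W_x$-piece of the $h_{n-1}$ obtained at the previous step (and shrinking in diameter so that $V_n\to\{x\}$). If the process terminates, some $y_n$ lands inside the current $W_x(h_n)$ and we are done. If it never terminates, we have $y_n\to x$ together with $h_n(x)=g_n(x)$ and $h_n(y_n)=g'_n(y_n)$ satisfying $d(h_n(x),h_n(y_n))\geq c$; compactness of $X$ lets us extract a convergent subsequence $(h_n(x),h_n(y_n))\to(p,q)$ with $d(p,q)\geq c$, and the finite generation of $G$---together with uniform continuity of each generator on compact $X$---should constrain the possible germs of the $h_n$ at $x$ sufficiently to produce a single $\bar{g}\in G$ realizing a separation at scale $c'>0$ depending only on $c$ and the action. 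Pinning down this final compactness-plus-finite-generation argument is the delicate step where the hypotheses of the theorem (compact $X$, finitely generated $G$) are essential, and I expect it to be the main technical obstacle.
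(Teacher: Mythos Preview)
You have reversed which implication is easy and which is hard, and this stems from misreading Definition~\ref{d:sensitivity}. Pseudogroup sensitivity quantifies over \emph{every} compatible metric and \emph{every} generating pseudo{\textasteriskcentered}group $\mathcal{S}$: the separating map must be found inside the given $\mathcal{S}$, not merely inside $\mathcal{G}$. Your ``forward'' argument (group sensitive $\Rightarrow$ pseudogroup sensitive) only produces $g|_V\in\mathcal{G}$; it says nothing about an arbitrary $\mathcal{S}$, so it establishes at best the naive sensitivity of Definition~\ref{d:naive}, which Lemma~\ref{l:naivefail} shows is vacuous. This is in fact the substantive direction, and it is precisely where finite generation of $G$ and compactness of $X$ enter: one must express each group generator locally via maps in $\mathcal{S}$, pass to a system of compact generation $(X,F,\widetilde F)$ with $\widetilde F\subset\mathcal{S}$, and invoke Proposition~\ref{p:halo} to obtain a uniform sensitivity constant $\min\{\sigma,c_G\}$ for $(\mathcal{S},d)$.

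Conversely, the implication you label ``hard'' (pseudogroup sensitive $\Rightarrow$ group sensitive) is immediate once the definition is read correctly: simply apply pseudogroup sensitivity with the particular pseudo{\textasteriskcentered}group $\mathcal{S}=G^*=\{\,g|_U\mid g\in G,\ U\subset X\ \text{open}\,\}$. The resulting $s\in\mathcal{S}$ is already a restriction of some $g\in G$, so $d(g(x),g(y))\geq c$ follows directly---no iteration, no compactness extraction, no ``chicken-and-egg'' issue. Your elaborate plan for this direction is attacking a non-problem; the compactness and finite-generation hypotheses are not needed here at all.
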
 
We also show in Section~\ref{s:nonsensitiveaction} that the conditions on $G$ and $X$ are  necessary for the result to hold. So, in general, sensitivity of the pseudogroup induced by a group action is strictly stronger than sensitivity of the group action itself.

\begin{theorem}\label{t:linked}
	There are group actions $G\acts X$ that are sensitive to initial conditions but such that the pseudogroup generated by the action is not, where either
	\begin{itemize}
		\item $G$ is the free group on two generators and $X=\mathbb{T}^2\times \mathbb{Z}$, where $\mathbb{T}^2$ is the $2$-torus, or
		\item $G$ is the free group on countably many generators and $X=\mathbb{T}^2$.
	\end{itemize}
\end{theorem}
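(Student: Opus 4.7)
The strategy is to exhibit explicit actions where classical sensitivity of the group action is realized by ``large-scale'' group elements that fall outside the scope of witnesses permitted by the pseudogroup definition (Definition~\ref{d:sensitivity}), which asks for uniformity (across $X$ and across scales) that the classical notion does not. The two examples address the two hypotheses of Theorem~\ref{t:sicactionpseudogroup} that one can relax: non-compactness of $X$ for the first bullet, non-finite-generation of $G$ for the second.

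For the first bullet, my plan is to take $G = F_2 = \langle a, b\rangle$ acting on $X = \mathbb{T}^2 \times \mathbb{Z}$ (with the product of a bounded metric on $\mathbb{T}^2$ and the discrete metric on $\mathbb{Z}$) by
\[
a \cdot (x, n) = (x, n+1), \qquad b \cdot (x, n) = (T_n x, n),
\]
where $T_0 \in \Homeo(\mathbb{T}^2)$ is the Arnold cat map and $T_n$ for $n \neq 0$ is an isometric irrational rotation. Classical sensitivity of the action, with the constant inherited from $T_0$, follows since every $(x, n)$ can be shifted by $a^{-n}$ to the hyperbolic layer, separated by $b^k$, and returned by $a^n$: the global element $a^n b^k a^{-n}$ is the witness. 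Pseudogroup non-sensitivity follows because any pseudogroup witness near $(x, n)$, expressed as a word in $\{a^{\pm 1}, b^{\pm 1}\}$ with properly composable local domains, must round-trip through height $0$, giving length at least $2|n|$; no bound uniform over $n \in \mathbb{Z}$ is available.

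For the second bullet, my plan is to take $X = \mathbb{T}^2$ and $G = F_\infty = \langle g_1, g_2, \ldots \rangle$, fix a dense sequence $(p_n)$ in $\mathbb{T}^2$ and radii $r_n \searrow 0$ with the enlargements $B(p_n, 2 r_n)$ pairwise disjoint, and let each $g_n \in \Homeo(\mathbb{T}^2)$ be supported in $B(p_n, 2 r_n)$ while expanding an inner sub-ball of $B(p_n, r_n)$ onto a region of diameter at least $1/2$. Classical sensitivity with $c = 1/4$ is immediate: for any $x \in \mathbb{T}^2$ and $\epsilon > 0$, density lets us pick $n$ with $d(p_n, x) < \epsilon/2$ and $r_n$ so small that $x \notin B(p_n, 2 r_n)$; then $g_n(x) = x$ while some $y \in B(p_n, r_n)$ has $g_n(y)$ at distance $\geq 1/2$ from $p_n$, so $d(g_n x, g_n y) \geq 1/4$. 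The pseudogroup is not sensitive because, for any uniform bound $N$ on the complexity of witnesses, only $g_1, \ldots, g_N$ are available to compose, while arbitrarily small scales $r_n$ with $n > N$ require the corresponding $g_n$.

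The main obstacle is to rigorously confirm non-sensitivity of the pseudogroups: one must rule out that a low-complexity pseudogroup element, via cancellation in the free group or cumulative effects across many generators, inadvertently witnesses separation at a scale it should not. This demands a careful induction on word length, tracking which heights each intermediate image reaches (first example) or which balls $B_n$ are visited (second example). The construction of $T_n$ and $g_n$ as genuine homeomorphisms fitting into coherent actions of the relevant free group is a routine exercise.
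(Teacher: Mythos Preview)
Your proposal rests on a misreading of Definition~\ref{d:sensitivity}: the witness $s$ ranges over the entire pseudo{\textasteriskcentered}group $\mathcal{S}^*$, which is closed under \emph{arbitrary} finite compositions and restrictions, so there is no ``complexity bound'' on $s$. This breaks both arguments. In your first example, given any generating pseudo{\textasteriskcentered}group $\mathcal{S}$ and any point $(x,n)$, Lemma~\ref{l:generate} furnishes local restrictions of $a^{\pm1}$ and $b$ along the path $(x,n)\to(x,n-1)\to\cdots\to(x,0)$, and their composition is an element of $\mathcal{S}^*$ defined on a neighbourhood of $(x,n)$ acting as $a^{n}b^{k}a^{-n}$, i.e.\ as the cat map on the torus coordinate. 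This separates nearby points by the cat map's sensitivity constant, uniformly in $n$; the word length $2|n|+k$ is irrelevant, so the pseudogroup \emph{is} sensitive and the example fails. Your second construction is impossible as stated: a homeomorphism of $\mathbb{T}^2$ equal to the identity outside $B(p_n,2r_n)$ must carry that ball to itself, so once $r_n<1/8$ no sub-ball can be mapped onto a region of diameter $\ge 1/2$.

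The paper's mechanism is different in kind. The linked twists $T_m$ have supports $M_m$ that exhaust a dense open set but \emph{never} contain the point $(0,0)$; hence the entire orbit $\{((0,0),z):z\in\mathbb{Z}\}$ lies where $\sigma$ is locally the identity. One then exhibits a specific generating pseudo{\textasteriskcentered}group in which every generator whose domain meets this orbit is an isometry, so every element of $\mathcal{S}^*$ defined at $((0,0),0)$ is an isometry and $((0,0),0)$ is a point of equicontinuity. The key idea you are missing is not a length restriction on witnesses but the existence of a point whose \emph{whole orbit} avoids the expanding part of the dynamics; your cat-map layer at height $0$ makes this impossible.
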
 

These actions are constructed using \emph{linked twists}, a family of classical examples of chaotic dynamical systems (see Section~\ref{s:linked} and the references therein). We will later recycle these counterexamples in the proof of Theorem~\ref{t:counterfol}.

We continue in Section~\ref{s:main} with our three main contributions regarding pseudogroup dynamics. Our first result addresses the following issue: if we are to use pseudogroups as dynamical models for foliated spaces, all our new definitions must be invariant by (Haefliger) equivalences. This is because the holonomy pseudogroup of a foliated space is only well-defined up to equivalence (see Sections~\ref{ss:equivalences} and~\ref{ss:foliated}). 

\begin{theorem}\label{t:invariant}
	Sensitivity to initial conditions, density of periodic orbits, Devaney chaos, and almost equicontinuity are invariant by equivalences of pseudogroups acting on locally compact Polish spaces.
\end{theorem}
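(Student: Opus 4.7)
The plan is to exploit the standard structure of a Haefliger equivalence: an equivalence between $\mathcal G$ on $X$ and $\mathcal H$ on $Y$ is witnessed by a family $\Phi=\{\phi_i\colon U_i\to V_i\}$ of homeomorphisms between open subsets with $\bigcup U_i=X$ and $\bigcup V_i=Y$, such that $\mathcal H$ restricted to each $V_i$ is (generated by) the conjugate $\phi_i\mathcal G\phi_i^{-1}$. Since $X$ and $Y$ are locally compact Polish, I can refine $\Phi$ to a locally finite family of transports whose domains (and images) are relatively compact in those of a larger family $\Phi'$; this is the uniformization step that turns pointwise data on $X$ into data with local uniform constants on $Y$, and it will carry every local property we care about across the equivalence. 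Since the four properties are invariants of the pseudogroup up to canonical identifications, it suffices to prove one direction of the transfer.

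Density of periodic orbits is the easiest: a point $x$ of $\mathcal G$ with non-trivial isotropy (or finite orbit, depending on which definition was fixed in Section~\ref{ss:sensitivityandchaos}) transports under any $\phi_i$ containing $x$ in its domain to a point $\phi_i(x)$ of $\mathcal H$ with the conjugate element as witness. Because the $V_i$ cover $Y$, density on $X$ forces density on $Y$. Topological transitivity (already known to be invariant; this will have been recorded among the preliminaries of Section~\ref{s:preliminaries}) follows from the same bookkeeping applied to open sets.

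For sensitivity to initial conditions I use the refined cover to build the constant. Fix the $\mathcal G$-sensitivity constant $c>0$ (or, in the local version from~\cite{AlvarezCandel}, a constant for each member of a fixed generating system). For each transport $\phi_i$ with $\overline{U_i}$ compact and contained in the domain of a larger $\phi_i'\in\Phi'$, both $\phi_i'$ and its inverse are uniformly continuous on the compact set $\overline{U_i}$, yielding a modulus $\delta_i$ such that $d_X(x,x')<\delta_i$ implies $d_Y(\phi_i(x),\phi_i(x'))<\epsilon$ and analogously the reverse direction. Given $y_0\in V_i$ and $r>0$, set $x_0=\phi_i^{-1}(y_0)$, find $x_1$ close to $x_0$ and $g\in \mathcal G$ separating their orbits by $c$, and transport this data through $\phi_i$ and a second transport $\phi_j$ whose domain contains $g(x_0),g(x_1)$. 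The conjugate $\phi_j g\phi_i^{-1}\in\mathcal H$ witnesses sensitivity at $y_0$ with a constant $c'_i$ depending only on the modulus associated with the pair $(\phi_i,\phi_j)$. Taking infima over the locally finite cover and using relative compactness yields a positive constant on every compact piece of $Y$, which is what the pseudogroup definition of sensitivity requires. Almost equicontinuity transfers by the symmetric calculation: a point of equicontinuity $x$ for $\mathcal G$ is shipped by $\phi_i$ to a point of equicontinuity for $\mathcal H$, its modulus being the composition of the moduli of $\phi_i$, the $\mathcal G$-equicontinuity at $x$, and the transports used to reconstruct elements of $\mathcal H$; density then follows as before.

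Devaney chaos is the conjunction of three invariants and so needs no separate argument. The main obstacle is not the conceptual content but the careful bookkeeping in the sensitivity step: one must ensure that the elements $\phi_j g\phi_i^{-1}$ used to witness separation in $Y$ can really be formed in $\mathcal H$ — that is, their domains contain the required points and their composition matches (up to restriction) a bona fide element of $\mathcal H$. This is exactly where local compactness and the ability to shrink $U_i$ while retaining the cover property are used; without them the conjugation could leave the image of $\Phi$. The rest reduces to chasing diagrams of local homeomorphisms, and the Polish hypothesis guarantees we can arrange the refined cover to be countable and locally finite so that the infimum of constants remains positive on compact sets.
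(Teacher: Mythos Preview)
Your sensitivity argument has a genuine gap rooted in the quantifier structure of Definition~\ref{d:sensitivity}. Sensitivity requires that for \emph{every} compatible metric $d'$ on $Y$ and \emph{every} generating pseudo{\textasteriskcentered}group $\mathcal T$ of $\mathcal H$ there is a single constant $c'>0$ working at all points of $Y$. You never explain how, given an arbitrary $(d',\mathcal T)$, you will choose $(d,\mathcal S)$ on $X$ to which the sensitivity hypothesis applies; you seem to assume fixed ambient metrics $d_X,d_Y$ throughout. Worse, even granting fixed metrics, your constant $c'_i$ depends on the pair $(\phi_i,\phi_j)$, and $j$ is determined by where $g$ lands---which you do not control. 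Taking an infimum over a countably infinite locally finite cover of a non-compact $Y$ need not be positive, and your claim that ``a positive constant on every compact piece of $Y$'' suffices contradicts the definition, which demands one global $c'$. Finally, the witness $\phi_j g\phi_i^{-1}$ must lie in the \emph{given} $\mathcal T$, not merely in $\mathcal H$; you do not arrange this.

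The paper avoids all of this by proving the contrapositive. If $\mathcal G$ is \emph{not} sensitive, there is a specific $(d,\mathcal S)$ with no sensitivity constant; Proposition~\ref{p:invariant} then \emph{constructs} a metric $d'$ on $Y$ and a generating pseudo{\textasteriskcentered}group $\mathcal T=\{\psi_j s\phi_i^{-1}:s\in\mathcal S\}$ that together witness non-sensitivity of $\mathcal H$. The metric $d'$ is not given in advance: it is built by pulling $d$ back along a locally finite family $\widehat\Phi_0\subset\Phi$ to local metrics $D_{\hat\phi}$ on the patches $\im\hat\phi$, and then gluing these into a global metric via a chain construction (infimum over admissible sequences). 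This is the step you are missing, and it is exactly why equivalences---which are only local data---can transport a global object like a metric. Your sketch for density of periodic orbits is also too quick: Definition~\ref{d:dpo} involves an open set $U$ meeting every orbit, and the proof (Lemma~\ref{l:dppinvariant}) needs a locally finite choice of transports to keep the image orbits finite; ``non-trivial isotropy'' is not the relevant notion here.
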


The corresponding result for equicontinuity  was proved in~\cite{AlvarezCandel}. The main difficulty in~\ref{t:invariant} is proving the invariance of sensitivity. The reason why this is not trivial is that sensitivity and almost equicontinuity involve a metric, which is a global object; equivalences, however, are made up of local homeomorphisms, so we have to put in some work to construct a global metric using the information carried over by the local maps. 

Our next objective will be to study whether Theorems~\ref{t:bbcds} and~\ref{t:auslanderyorkemin} extend to the pseudogroup setting. 
We manage to do so  for compactly generated pseudogroups (see Section~\ref{ss:compactlygenerated} for the definition of compact generation).

\begin{theorem}[Auslander--Yorke dichotomy for pseudogroups]\label{t:auslanderyorkepseudo}
	Let $\mathcal{G}$ be a compactly generated and topologically transitive pseudogroup acting on a Polish space. Then $\mathcal{G}$ is either sensitive to initial conditions or almost equicontinuous. Moreover, if  $\mathcal{G}$ is  minimal, then it is either sensitive to initial conditions or equicontinuous.
\end{theorem}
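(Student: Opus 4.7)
The plan is to follow the strategy of the classical Auslander-Yorke argument, with compact generation of $\mathcal{G}$ playing the role that compactness of the phase space plays in~\cite{AuslanderYorke}. Let $\Eq(\mathcal{G})$ denote the set of equicontinuity points of $\mathcal{G}$, defined in parallel to the pseudogroup notion of sensitivity from Section~\ref{ss:sensitivityandchaos}. The strategy is to show that $\Eq(\mathcal{G})$ is either empty or dense; in the latter case $\mathcal{G}$ is almost equicontinuous by definition, and in the former I would show that $\mathcal{G}$ must be sensitive, yielding the first part of the theorem.

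From the definition, $\Eq(\mathcal{G})$ is open. I would then show that it is $\mathcal{G}$-saturated: using compact generation, every element of $\mathcal{G}$ is locally a composition of finitely many generators whose domains have compact closures, and continuity of these finitely many maps transports a common modulus of equicontinuity at $x$ to one at $g(x)$ for any $g \in \mathcal{G}$ defined at $x$. Topological transitivity then forces any non-empty open saturated set to be dense, giving the first half of the dichotomy.

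The main obstacle is the converse: if $\Eq(\mathcal{G}) = \emptyset$, then $\mathcal{G}$ is sensitive. Arguing by contrapositive, I would assume $\mathcal{G}$ is not sensitive and construct an equicontinuity point. For each $\epsilon > 0$, the set $X_\epsilon$ of points $x$ admitting $\delta > 0$ such that $d(g(x), g(y)) < \epsilon$ for all $y \in B(x,\delta)$ and all $g \in \mathcal{G}$ defined at both $x$ and $y$ is open and, by the failure of sensitivity, non-empty. A further use of topological transitivity, together with the local homeomorphism property of pseudogroup elements, shows that each $X_\epsilon$ is dense. The compact generation hypothesis provides a relatively compact open set $U$ carrying a finite generating system, and a Baire-category argument on $\overline{U}$ produces a point in $\bigcap_n X_{1/n}$, i.e.\ an equicontinuity point. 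This is precisely where compact generation substitutes for compactness of the phase space in the classical argument.

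Finally, for the minimality statement: if $\mathcal{G}$ is minimal and not sensitive, then $\Eq(\mathcal{G})$ is open, saturated, and non-empty, so minimality forces $\Eq(\mathcal{G}) = X$. Upgrading pointwise to uniform equicontinuity uses compact generation once more: the finite generating system on the relatively compact $U$ allows the pointwise moduli to be selected uniformly on $\overline{U}$, yielding equicontinuity of the restriction of $\mathcal{G}$ to $U$, which suffices by the equivalence-invariance of equicontinuity proved in~\cite{AlvarezCandel} together with Theorem~\ref{t:invariant}.
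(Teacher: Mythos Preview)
Your outline has the right classical shape, but there is a genuine gap at the heart of it, and it is exactly the place where the pseudogroup setting differs from group actions.

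First a minor point: when you define $X_\epsilon$ you quantify over ``all $g\in\mathcal G$''. By Lemma~\ref{l:naivefail}, for a topologically transitive pseudogroup on a perfect space this set is empty for small~$\epsilon$. You must work relative to a fixed generating pseudo{\textasteriskcentered}group $\mathcal S$ witnessing non-sensitivity (and a fixed metric $d$), as in Definition~\ref{d:sensitivity}; presumably this is what you intend when you write ``in parallel to Section~\ref{ss:sensitivityandchaos}'', but the text is inconsistent.

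The real gap is your repeated assertion that $\Eq(\mathcal G)$ and the sets $X_\epsilon$ are open. For group actions one passes from $X_\epsilon$ to the open set $Y_\epsilon=\{x:\exists\delta,\ \diam s(B(x,\delta))<\epsilon\ \forall s\}$ via the triangle inequality through $s(x)$, using that $s$ is defined on the whole ball. For a pseudo{\textasteriskcentered}group this fails: given $x\in X_\epsilon$ with witness $\delta$, a map $s\in\mathcal S$ may have $y,z\in\dom s\cap B(x,\delta)$ while $x\notin\dom s$, and then nothing controls $d(s(y),s(z))$. So neither the openness of $X_\epsilon$, nor its density (your ``further use of topological transitivity''), nor the claim that in the minimal case $\Eq(\mathcal G)=X$, follows from the ingredients you list.

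This is precisely where the paper deploys compact generation, via Proposition~\ref{p:halo}: for a system of compact generation $(U,F,\widetilde F)$ with $\widetilde F\subset\mathcal S$, the non-sensitivity of $x_n$ at scale $1/n<\sigma/2$ rules out alternative~(\ref{i:nothalo}), forcing the ``halo'' alternative~(\ref{i:halo}): there is $r_n>0$ with $B(x_n,r_n)\subset\dom\tilde f$ for every $f\in\langle F\rangle$ meeting that ball. Only then does one obtain $\diam \tilde f(B(x_n,r_n))<2/n$ for all such $\tilde f$, which is what makes the orbit-saturation $V_n=\bigcup_{f\in\langle F\rangle}f(B(x_n,r_n))$ into an honest open dense set and allows the Baire argument to produce $(F^*,d)$-equicontinuity points. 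Your sketch invokes compact generation only for saturation (where it is not needed; see the lemma following the definition of almost equicontinuity) and for uniformising moduli at the end, but misses its essential role in bridging the partial-domain obstruction above.
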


\begin{theorem}\label{t:ttdposicpseudo}
	If $\mathcal{G}$ is a compactly generated and topologically transitive pseudogroup acting on a Polish space which  has density of periodic orbits, then it is sensitive to initial conditions.
\end{theorem}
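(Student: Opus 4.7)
The plan is to adapt the classical Banks-Brooks-Cairns-Davis-Stacey argument (Theorem~\ref{t:bbcds}), with compact generation compensating for the absence of global iterates. First I would produce two periodic orbits $\mathcal{O}_1, \mathcal{O}_2 \subset X$ whose closures lie at positive distance $8c$; such orbits exist thanks to density of periodic orbits, for otherwise $X$ would be the closure of a finite union of finite orbits and collapse to a degenerate case to be excluded separately. The constant $c$ will serve as the sensitivity constant. Given $x \in X$ and $r > 0$, I would choose $\mathcal{O} \in \{\mathcal{O}_1, \mathcal{O}_2\}$ with $d(x, \mathcal{O}) \geq 4c$, use density of periodic orbits to pick a periodic point $p \in B(x, r)$, and let $h \in \mathcal{G}$ satisfy $h(p) = p$, encoding the return to $p$.

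Next I would invoke compact generation to pass to a reduction $\mathcal{G}|_U$ on a relatively compact open set $U$, generated by finitely many elements that extend continuously to $\overline U$; this provides uniform moduli of continuity for all compositions of bounded word length. Choose $z \in \mathcal{O}$ and a neighborhood $V$ of $z$ such that no word of bounded length in these generators spreads $V$ by diameter more than $c$. Topological transitivity then yields $y \in B(x, r)$ and $g \in \mathcal{G}$ with $g(y) \in V$. Composing $g$ with a suitable power $h^k$---the pseudogroup analogue of adjusting by a multiple of the period---I would obtain an element $g' \in \mathcal{G}$ defined simultaneously near $x$ and $y$ with $g'(p) = p$ and $g'(y)$ within distance $c$ of $\mathcal{O}$. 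The triangle inequality then yields
\[
d(g'(x), g'(y)) \;\geq\; d(p, \mathcal{O}) - d(g'(x), p) - c \;\geq\; 4c - r - c,
\]
which exceeds $2c$ once $r < c$, establishing sensitivity with constant $c$ (after, if needed, a harmless halving).

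The main obstacle is the construction of $g' = h^k \circ g$ with the right properties. In the classical setting one just picks an integer $k$ large enough, but for pseudogroups one must ensure that $\dom h^k$ extends far enough to contain $g(x)$, and that the word length of $g'$ in the finite generating set remains within the uniform bound that kept the image of $V$ small. Both concerns are precisely what compact generation handles: it supplies a reduction on which generators extend continuously to the closure, so iterates of $h$ retain sufficiently large domains and word lengths can be tracked through a fixed finite alphabet. A secondary subtlety is the verification that sensitivity transfers to the original $\mathcal{G}$ from the reduction where the argument is carried out; by Theorem~\ref{t:invariant}, this is automatic, so the focus can stay on the reduction. The degenerate case of a single periodic orbit should be dispatched directly by observing that topological transitivity then forces $X$ to be a finite set, where the conclusion is either vacuous or excluded by the paper's definition of sensitivity.
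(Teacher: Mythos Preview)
Your outline follows the Banks--Brooks--Cairns--Davis--Stacey template, and the paper's proof is indeed a BBCDS adaptation, but there is a genuine gap in your domain-extension step that the paper resolves by a different route.

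The problem is your construction of $g' = h^k \circ g$. The map $h$ fixes $p$, so $\dom h$ is a neighbourhood of $p$; but $g(y)$ lies in $V$ near the distant orbit $\mathcal{O}$, so $g(y) \notin \dom h^k$ and the composition $h^k \circ g$ is not defined at $y$. More fundamentally, you need a \emph{single} pseudogroup element $g'$ whose domain contains $x$, $p$, and $y$ simultaneously, with $g'(p)=p$ and $g'(y)$ near $\mathcal{O}$. Your claim that ``compact generation handles this'' because ``iterates of $h$ retain sufficiently large domains'' is not correct: compact generation provides extensions $\tilde f_i \supset f_i$ of the \emph{finitely many} generators, but domains of long compositions $\tilde f_{i_n}\cdots \tilde f_{i_1}$ can still shrink without bound. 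There is no uniform lower bound on the domain of an arbitrary word.

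The paper's key manoeuvre is to first invoke the Auslander--Yorke dichotomy (Theorem~\ref{t:auslanderyorkepseudo}): it suffices to show there are no $(\mathcal{S},d)$-equicontinuity points. Assuming $x$ is such a point, Proposition~\ref{p:halo} then yields a radius $\rho>0$ with $B(x,\rho)\subset \dom \tilde f$ for \emph{every} $f\in\langle F\rangle$ whose domain meets $B(x,\rho)$. This ``halo'' is precisely the domain control you are missing, and it is only available after the reduction to an equicontinuity point. With the halo in hand, the paper runs a BBCDS-style argument: periodic $p_n\to x$, a periodic $q$ with $d(x,\mathcal{G}q)>\epsilon$, transitivity maps $f_n$ taking points $v_n\to x$ near $q$, and return maps $k_n$ with $k_nf_n(p_n)=p_n$; the halo guarantees all compositions $\tilde k_n\tilde f_n$ are defined on $B(x,\rho)$, and the triangle inequality yields the contradiction. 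Without passing through equicontinuity (or some equivalent device producing the halo), your direct argument does not close.
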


Even though Theorem~\ref{t:bbcds} holds for actions on non-compact spaces, we exhibit in Section~\ref{s:cantor} a non-compactly generated, countably generated pseudogroup that is topologically transitive and has density of periodic orbits, but it is not sensitive to initial conditions. This shows that compact generation cannot be dropped in Theorem~\ref{t:ttdposicpseudo}.

At this point, we turn our attention to studying chaos for foliated spaces. By virtue of Theorem~\ref{t:invariant}, we can define almost equicontinuity and sensitivity using the holonomy pseudogroup.
\begin{definition}
	A foliated space is sensitive to initial conditions or almost equicontinuous if its holonomy pseudogroup is.
\end{definition}
Regarding density of periodic orbits and Devaney chaos, we encounter an additional subtlety: It is easy to check that density of periodic orbits for the holonomy pseudogroup implies density of closed leaves, but one might also consider the stronger condition of density of compact leaves. We choose the latter option because the counterexample we exhibit in Theorem~\ref{t:counterfol} satisfies this stronger condition. On the other hand, we run into the problem that density of compact leaves cannot be formulated as a equivalence-invariant property of the holonomy pseudogroup (see Example~\ref{e:rz}). 

\begin{definition}\label{d:chaosfs}
	A foliated space is \emph{chaotic} if it is topologically transitive, it has a dense set of compact leaves, and it is sensitive to initial conditions. 
\end{definition}

Note that, by the previous discussion, chaoticity of the foliated space is strictly stronger than chaoticity of the holonomy pseudogroup. We show in Section~\ref{s:chaoschaos} an explicit example of this behavior.

Our next step is to extend Theorems~\ref{t:bbcds} and~\ref{t:auslanderyorkemin} for compact foliated spaces, where density of compact leaves and of closed leaves coincide. By the previous discussion, these results follows immediately from Theorems~\ref{t:ttdposicpseudo} and~\ref{t:auslanderyorkepseudo}.
\begin{theorem}\label{t:ttdposicfol}
	Let $X$ be a compact Polish foliated space. If $X$ is topologically transitive and has density of compact leaves, then it is sensitive to initial conditions.
\end{theorem}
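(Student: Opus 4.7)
The plan is to reduce directly to the pseudogroup statement Theorem~\ref{t:ttdposicpseudo}, using the dictionary between a compact foliated space $X$ and its holonomy pseudogroup $\mathcal{G}$ acting on a complete transversal $T$.

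First, I would set up the holonomy pseudogroup. Choose a finite good foliated atlas of $X$ (possible since $X$ is compact), take $T$ to be the disjoint union of the corresponding local transversals, and let $\mathcal{G}$ be the resulting holonomy pseudogroup on $T$. Standard foliation theory gives that for a compact foliated space, the holonomy pseudogroup constructed from a finite good cover is compactly generated in the sense of Section~\ref{ss:compactlygenerated}; this is the key input that lets us invoke Theorem~\ref{t:ttdposicpseudo}. By Theorem~\ref{t:invariant}, compact generation of a representative makes sensitivity an intrinsic property of $X$, so I am free to work with this specific $\mathcal{G}$.

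Next, I would transfer the two hypotheses from $X$ to $\mathcal{G}$. Topological transitivity of the foliated space is defined through its holonomy pseudogroup (it is the condition that $\mathcal{G}$ has a dense orbit equivalently, that it satisfies the open $U,V$ condition), so this passes over immediately. For the second hypothesis, I need to show that density of compact leaves in $X$ implies density of periodic (i.e.\ finite) orbits of $\mathcal{G}$ in $T$. The point is that, because $X$ is compact, a leaf $L$ is compact if and only if it is closed, and a closed leaf $L$ meets each plaque-sized transversal in a set which is compact and discrete, hence finite; thus $L \cap T$ is a finite $\mathcal{G}$-orbit. Given a non-empty open set $W \subset T$, saturating to $X$ produces a non-empty open saturated set, which by hypothesis contains a compact leaf $L$; any point of $L\cap W$ then yields a finite $\mathcal{G}$-orbit meeting $W$, giving density of periodic orbits.

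Now Theorem~\ref{t:ttdposicpseudo} applies to $\mathcal{G}$, so $\mathcal{G}$ is sensitive to initial conditions. By the definition immediately preceding Definition~\ref{d:chaosfs}, sensitivity of the foliated space $X$ is precisely sensitivity of its holonomy pseudogroup, so we conclude that $X$ is sensitive to initial conditions, as required. The main (minor) obstacle is verifying cleanly that density of compact leaves translates into density of periodic orbits of the holonomy pseudogroup; everything else is an application of already-established results, most notably Theorem~\ref{t:ttdposicpseudo} and the invariance Theorem~\ref{t:invariant} which licenses the choice of $\mathcal{G}$.
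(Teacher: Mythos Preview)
Your proposal is correct and follows exactly the route the paper takes: the paper simply remarks that, for a compact foliated space, the holonomy pseudogroup is compactly generated and density of compact leaves coincides with density of closed leaves (hence yields density of periodic orbits), so the statement follows immediately from Theorem~\ref{t:ttdposicpseudo}. Your write-up just spells out the dictionary step more explicitly, which is fine.
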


\begin{theorem}
	Let $X$ be a compact and topologically transitive Polish foliated space. Then $X$ is either sensitive to initial conditions or almost equicontinuous. Moreover, if $X$ is minimal, then it is either sensitive to initial conditions or equicontinuous.
\end{theorem}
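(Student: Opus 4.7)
The plan is to reduce the statement to Theorem~\ref{t:auslanderyorkepseudo} (the Auslander--Yorke dichotomy for pseudogroups) via the holonomy pseudogroup $\mathcal{G}$ of $X$. By the definitions preceding the statement, sensitivity and almost equicontinuity of $X$ are defined to be the same properties for $\mathcal{G}$, so the output side of the dichotomy translates directly. The task is therefore to verify that the hypotheses of Theorem~\ref{t:auslanderyorkepseudo} are met by $\mathcal{G}$.

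First I would check compact generation. This is the place where the compactness of $X$ is used in an essential way: it is a standard fact (recorded in the paper's Section~\ref{ss:compactlygenerated} on compactly generated pseudogroups) that the holonomy pseudogroup of a compact foliated space, computed from a finite good cover, is compactly generated. Next I would match up the dynamical conditions. Topological transitivity of $X$ (existence of a dense leaf, or equivalently the standard open-set condition) is equivalent to topological transitivity of $\mathcal{G}$, since leaves of $X$ correspond to orbits of $\mathcal{G}$ on a complete transversal, and the saturation of an open set in the transversal corresponds to an open saturated set in $X$. Similarly, minimality of $X$ (every leaf dense) is the same as minimality of $\mathcal{G}$ (every orbit dense on the transversal).

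Having matched the hypotheses and the conclusions, I would invoke Theorem~\ref{t:auslanderyorkepseudo} directly: in the topologically transitive case, $\mathcal{G}$ is either sensitive or almost equicontinuous, hence $X$ is; in the minimal case, $\mathcal{G}$ is either sensitive or equicontinuous, hence $X$ is. Here I would implicitly rely on Theorem~\ref{t:invariant}, which guarantees that the notions involved are invariant under equivalence of pseudogroups and therefore depend only on $X$, not on the particular choice of good cover defining $\mathcal{G}$.

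The only substantive point, and the one I would take a little care with, is the translation between "dense leaf'' / "every leaf dense'' in $X$ and the transitivity / minimality of $\mathcal{G}$ on the transversal, together with the statement that compactness of $X$ yields compact generation; neither is difficult, but both use the specific form of a foliated atlas and the identification of the transversal as a complete transversal. Once these routine correspondences are in place, no further argument is required beyond citing Theorem~\ref{t:auslanderyorkepseudo}.
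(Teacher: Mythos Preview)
Your proposal is correct and matches the paper's approach exactly: the paper states that this theorem ``follows immediately from Theorems~\ref{t:ttdposicpseudo} and~\ref{t:auslanderyorkepseudo}'' via the holonomy pseudogroup, using Haefliger's lemma that compact foliated spaces have compactly generated holonomy pseudogroups and the definitions that transfer sensitivity and (almost) equicontinuity between $X$ and $\mathcal{G}$. Your additional remarks on matching transitivity/minimality of $X$ with that of $\mathcal{G}$ and on invoking Theorem~\ref{t:invariant} for well-definedness are accurate and simply make explicit what the paper leaves as ``the previous discussion.''
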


In analogy to the case of pseudogroups, where we need compact generation in Theorem~\ref{t:ttdposicpseudo}, compactness cannot be dropped in Theorem~\ref{t:ttdposicfol}; a simple counterexample with totally disconnected transversals is constructed in Section~\ref{s:ttdposicfol}.
One might wonder whether it is possible to find similar counterexamples among non-compact foliations, perhaps with smooth transversal dynamics.
We conclude the paper  in Sections~\ref{s:affinepseudogroup} and~\ref{s:ttdclnotsicaffine} with the following counterexample: 

\begin{theorem}\label{t:counterfol}
	There is a foliation by surfaces on a smooth $4$-manifold that is topologically transitive and has a dense set of compact leaves, but is not sensitive to initial conditions. This foliation is $C^\infty$ and transversally affine.
\end{theorem}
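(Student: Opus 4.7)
The plan is to suspend the sensitive $F_2$-action on $X=\mathbb{T}^2\times\mathbb{Z}$ provided by Theorem~\ref{t:linked}. Recall this action is topologically transitive, has density of periodic points, and is sensitive to initial conditions, yet the pseudogroup it generates is not sensitive. Choose a connected compact smooth surface $B$ with $\pi_1(B)\cong F_2$---for instance a pair of pants or a one-holed torus---with universal cover $\tilde B\to B$ carrying its natural free deck action of $F_2$. Let $F_2$ act diagonally on $\tilde B\times X$ and set $M=(\tilde B\times X)/F_2$; this is a smooth $4$-manifold, and the horizontal foliation of $\tilde B\times X$, whose leaves are the slices $\tilde B\times\{x\}$, is $F_2$-invariant and descends to a smooth foliation $\mathcal F$ of $M$ by surfaces.

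The second step is the standard identification of the holonomy pseudogroup of the suspension $\mathcal F$, with respect to the transversal $\{b_0\}\times X\cong X$, with the pseudogroup generated by the given $F_2$-action on $X$. From this identification I can read off everything needed: a dense orbit of the action supplies a dense leaf of $\mathcal F$; density of periodic points gives density of compact leaves, since, $B$ being compact, the leaf through $(b_0,x)$ is compact exactly when the $F_2$-orbit of $x$ is finite; and non-sensitivity of the generated pseudogroup, transported along the natural equivalence between this pseudogroup and the holonomy pseudogroup of $\mathcal F$, yields non-sensitivity of $\mathcal F$ via Theorem~\ref{t:invariant}.

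The third step is to arrange everything to be $C^\infty$ and transversally affine. This reduces to realising the two generators of $F_2$ as smooth diffeomorphisms of $\mathbb{T}^2\times\mathbb{Z}$ whose restriction to each torus component lies in $\mathrm{Aff}(\mathbb{T}^2)=SL(2,\mathbb{Z})\ltimes\mathbb{T}^2$, and which globally reproduce the linked-twist dynamics from the proof of Theorem~\ref{t:linked}. Once the generators act affinely in transversal charts, the holonomy transition maps of $\mathcal F$ are compositions of affine maps, giving the required transversal affine structure, and the smoothness of these generators makes $\mathcal F$ a $C^\infty$ foliation.

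The hard part is exactly this last step, and it is what occupies Sections~\ref{s:affinepseudogroup} and~\ref{s:ttdclnotsicaffine}. The classical linked twist maps are only piecewise smooth across the annular boundaries on which the twisting is switched off, and they are not affine. Replacing them by a genuinely $C^\infty$ transversally affine model that still exhibits simultaneously (a) global sensitivity of the $F_2$-action on $\mathbb{T}^2\times\mathbb{Z}$ and (b) non-sensitivity of the generated pseudogroup is delicate: these two properties are complementary---sensitivity is a global metric spreading condition for the action, whereas non-sensitivity of the pseudogroup reflects a uniform local shrinking in its partial generators---so the smoothing and affinisation procedure must be designed to preserve both phenomena at once.
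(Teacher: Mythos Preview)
Your plan has a fatal gap before you ever reach the smoothness and affinity issues: the $F_2$-action on $\mathbb{T}^2\times\mathbb{Z}$ from Theorem~\ref{t:linked} has \emph{no} finite orbits whatsoever. One of the two generators is $\tau((x,y),z)=((x,y),z+1)$, so every orbit projects onto all of $\mathbb{Z}$. Your claim that ``this action\ldots\ has density of periodic points'' is simply false---the paper never asserts it, and Proposition~\ref{p:gmathcalg} only establishes sensitivity of the action and non-sensitivity of the pseudogroup. Consequently the suspension you build has no compact leaves at all, let alone a dense set of them. This is precisely why the paper opens Section~\ref{s:affinepseudogroup} by saying ``we cannot use the pseudogroup of Section~\ref{s:nonsensitiveaction} because all its orbits are infinite.''

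The paper's fix is not a smoothing of the same generators but a genuinely different construction. It restricts the twist map $\tilde f$ to the complement of the non-smooth locus $\Delta_n$ (so that it becomes honestly affine) and, crucially, restricts the shift map $\tilde g$ by excising small balls around a carefully chosen set $\widetilde Q_n$ of rational points. This second restriction is what creates finite orbits (Lemma~\ref{l:finiteorbits}): a point in $\widetilde Q_n\times\{m\}$ cannot escape past level $n$ because it lands in the excised region. But now $\tilde f$ and $\tilde g$ are only partial homeomorphisms, not elements of $\Homeo(Y)$, so there is no group action to suspend. This forces the hand-built gluing in Section~\ref{s:ttdclnotsicaffine}, attaching handles only over $\dom\tilde f$ and $\dom\tilde g$, which produces a manifold with boundary that must then be doubled. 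Your proposal to realise the generators inside $\mathrm{Aff}(\mathbb{T}^2)=SL(2,\mathbb{Z})\ltimes\mathbb{T}^2$ cannot work either: linked twists are piecewise affine with several linear regimes and are never globally affine on the torus, so ``transversally affine'' is obtained only after passing to partial maps.
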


We can offer the following geometrical interpretation of the lack of sensitivity in Theorem~\ref{t:counterfol}. For this non-compact, smooth $4$-manifold $M$, there is a locally finite foliated atlas $(U_i,\phi_i)$---where $\phi_i\colon U_i\to \mathbb{R}^2\times T_i$ and $T_i\subset \mathbb{R}^2$ are the local transversals---satisfying the following condition: every holonomy transformation is an affine map, and there is a leaf $L$ such that every holonomy transformation $h$ between transversals $T_i\to T_j$ induced by a path in $L$ is an isometry with respect to the Euclidean metric on $T_i, T_j\subset \mathbb{R}^2$.

The results of this paper confirm that our definition of chaos is the right one if we restrict our attention to compactly generated pseudogroups and compact foliated spaces. However, as soon as we drop compactness, it seems to become a strong condition, at least when compared to the case of group actions. Even though it is invariant by equivalences and it is phrased in a way that mirrors other pseudogroup dynamical properties in the literature, it is an open question whether one can find a definition better suited to the non-compact case. The examples in Sections~\ref{s:cantor} and~\ref{s:foliated} suggest that perhaps one should  require only a meager set of equicontinuity points. The author ignores whether this condition could extend the notion of sensitivity to actions of countably generated pseudogroups satisfactorily.

\section{Preliminaries}\label{s:preliminaries}

\subsection{Metric spaces}
In this paper,  we consider metric functions $d\colon X\times X\to [0,\infty]$ that may attain an infinite value. A metric on a topological space is said to be \emph{compatible} if the underlying topology agrees with that generated by the open balls. A topological space is \emph{Polish} if it is separable and it admits a compatible complete metric; that is, one where every Cauchy sequence converges. All topological spaces will be implicitly  assumed to be Polish.

A \emph{shrinking} of an indexed open covering $\{U_\alpha\}_{\alpha\in A}$ of some topological space is a covering $\{V_\alpha\}_{\alpha\in A}$ with the same index set and such that $\overline{V_\alpha}\subset U_\alpha$ for every $\alpha\in A$. A covering $\{U_\alpha\}_{\alpha\in A}$ of $X$ is \emph{locally finite} if every point $x\in X$ has an open neighborhood $W$ that intersects only finitely many of the sets $U_\alpha$. We will make use of the following result (see e.g.~\cite[p.~227]{Munkres}):

\begin{lemma}[Shrinking lemma]
    Let $X$ be a Polish space. If $\{U_n\}_{n\in\mathbb{N}}$ is a locally finite countable open covering of $X$, then $\{U_n\}_{n\in\mathbb{N}}$ admits a shrinking.
\end{lemma}

\subsection{Partial maps and pseudogroups}\label{s:pseudogroup}
Let $X$ and $Y$ be topological spaces. A \emph{partial map} from $X$ to $Y$ is a map $f\colon A\to Y$ with domain a subset $A\subset X$. Given a partial map $f$, let $\dom f$ and $\im f$ denote the domain and image of $f$, respectively. We say that a partial map $f$ from $X$ to $Y$ is a \emph{partial homeomorphism}  if $\dom f\subset X$ and $\im f\subset Y$ are open  and $f\colon\dom f\to \im f$ is a homeomorphism; we  denote by $\Ph(X,Y)$ the set of partial homeomorphisms from $X$ to $Y$. From now on, we use  $f(A)$ as shorthand for  $f(A\cap\dom f)$, where $f\in\Ph(X,Y)$ and $A\subset X$.

Given $f\in \Ph(X,Y)$ and $g\in\Ph(Y,Z)$, the \emph{composition} $gf\in \Ph(X,Z)$ is defined by
\[
\dom gf= f^{-1}(\dom g),\qquad (gf)(x)=g(f(x)).
\]
Given $f\in \Ph(X,Y)$ and an open set $U\subset \dom f$, the \emph{restriction} $f|_{U}$ has domain $U$ and image $f(U)$. 

Let $\{f_i\mid i\in I\}$ be a family of maps in $\Ph(X,Y)$ and suppose that
\begin{equation}\label{fifj}
(f_{i})|_{\dom f_i\cap \dom f_j}=(f_{j})|_{\dom f_i\cap \dom f_j}\qquad \text{for every}\ i,j\in I,
\end{equation}
then the \emph{combination} $\bigcup_{i\in I}f_i$ is defined by
\[
\dom \big(\bigcup_{i\in I}f_i\big) =\bigcup_{i\in I}(\dom f_i),\qquad \big(\bigcup_{i\in I}f_i \big)(x)= f_i(x) \quad \text{for} \ x\in \dom f_i.
\]
For $f,g\in\Ph(X,Y)$, we say that $f$ \emph{extends} $g$, or $f$ is an \emph{extension} of $g$, if 
\[
\dom g\subset \dom f\qquad  \text{and} \qquad f|_{\dom g}=g.\]
For brevity, we use $\Ph(X)$ to denote the set $\Ph(X,X)$.

\begin{definition}[\cite{CY}]\label{d:pseudogroup}
	A subset $\mathcal{G}\subset \Ph(X)$ is a \emph{pseudogroup} if the following conditions are satisfied:
	\begin{itemize}[--]
		\item Group-like axioms:
		\begin{enumerate}[(i)]
			\item \label{i:identity} $\id_X\in\mathcal{G}$,
			\item if $f\in\mathcal{G}$, then $f^{-1}\in \mathcal{G}$ (\emph{closure under inversion}), and
			\item if $f,g\in \mathcal{G}$, then $fg\in\mathcal{G}$ (\emph{closure under composition}).
		\end{enumerate}
		\item Sheaf-like axioms:
		\begin{enumerate}[(i)]
			\setcounter{enumi}{3}
			\item \label{i:restrictions} if $f\in \mathcal{G}$ and $U\subset \dom f$ is open, then $f|_{U}\in \mathcal{G}$ (\emph{closure under restrictions}), and,
			\item \label{i:combination} if $\{f_i,\ i\in I\}$ is a family of maps in $\mathcal{G}$ satisfying~\eqref{fifj}, then $\bigcup_{i\in I} f_i\in \mathcal{G}$.  (\emph{closure under combinations}).
		\end{enumerate}
	\end{itemize} 
The last axiom can be refomulated as follows:
\begin{enumerate}
	\item[(v)$'$] if $f\in \Ph(X)$ is such that every $x\in \dom f$ has some open neighborhood $U_x$  with $f|_{U_x}\in \mathcal{G}$, then $f\in \mathcal{G}$.
\end{enumerate}
\end{definition}

If $\mathcal{G}\subset\Ph(X)$ is a pseudogroup, we say that $\mathcal{G}$ \emph{acts} on $X$ and we denote it by $\mathcal{G}\acts X$.
The $\mathcal{G}$-orbit of a point $x\in X$ is the subset $\mathcal{G}x = \{gx \mid g\in\mathcal{G}\}$.
If $\{\mathcal{G}_i\}_{i\in I}$ is a collection of pseudogroups acting on $X$, then $\bigcap_{i\in I}\mathcal{G}_i\subset \Ph(X)$ is also a pseudogroup. A subset $S\subset\mathcal{G}$ \emph{generates} $\mathcal{G}$ if $\mathcal{G}$ is the smallest pseudogroup containing $S$; equivalently, $\mathcal{G}$ is the intersection of all the pseudogroups in $\Ph(X)$ that contain $S$.
Let $\mathcal{G}$ be a pseudogroup acting on $X$, and let $U$ be an open subset of $X$, then 
the \emph{restriction} 
\[
\mathcal{G|}_{U}=\{\,f\in\mathcal{G} \mid \dom f\subset U,\ \im f\subset U \,\}
\]
is a pseudogroup acting on $U$.

One can find in the literature definitions of pseudogroup that omit Axiom~(\ref{i:combination}) (e.g., \cite{HectorHirsch}). The reason is that, by allowing combinations, a pseudogroup  might have ``too many maps'' to satisfy reasonable dynamical properties
 (see Definition~\ref{d:naive} and Lemma~\ref{l:naivefail}); this motivates the following definition.
\begin{definition}
	A \emph{pseudo{\textasteriskcentered}group} is a subset $\mathcal{S}\subset \Ph(X)$ satisfying Axioms~(\ref{i:identity})--(\ref{i:restrictions}) in Definition~\ref{d:pseudogroup}.
\end{definition}
This terminology was introduced by S.~Matsumoto in~\cite{Matsumoto}. For $S\subset\Ph(X)$, let $\langle S\rangle\subset\Ph(X)
$ denote the set consisting of finite compositions and inversions of elements in $S$, and let $S^*\subset\Ph(X)$ denote the set of partial homeomorphisms obtained from $S$ by composition, inversion, and restriction to open subsets; equivalently,
\[
S^*=\{\,s|_U : s\in\langle S\rangle,\  U\subset \dom s\ \text{open}\,\}
\]
and $S^*$ is the smallest pseudo{\textasteriskcentered}group containing $S$.

\begin{lemma}\label{l:generate}
	Let $\mathcal{G}\acts X$ be a pseudogroup and let $S\subset \mathcal{G}$. Then $S$ generates $\mathcal{G}$ if and only if, for every $g\in \mathcal{G}$ and $x\in\dom g$, there is an open neighbourhood $U$ of $x$ such that $g|_{U}\in S^*$.
\end{lemma}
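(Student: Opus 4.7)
The plan is to prove both directions by an explicit construction exploiting the fact that combination is the one operation separating a pseudogroup from a pseudo{\textasteriskcentered}group, and that this operation is precisely the ``local-to-global'' step.

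For the forward direction, I would introduce the set
\[
\mathcal{G}' = \{\, g \in \Ph(X) \mid \forall\, x \in \dom g,\ \exists\ \text{open } U \ni x \text{ with } g|_U \in S^* \,\}
\]
and verify that $\mathcal{G}'$ is a pseudogroup containing $S$. The containment $S\subset \mathcal{G}'$ is immediate (take $U=\dom s$), and axioms~(i)--(iv) of Definition~\ref{d:pseudogroup} are routine: for inversion use that $g(V)$ is open and $(g|_V)^{-1}\in S^*$; for composition take $U'=U\cap g^{-1}(V)$ where $V$ witnesses $h$ at $g(x)$ and $U$ witnesses $g$ at $x$; closure under restriction is trivial. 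For axiom~(v), if $\{f_i\}$ is a compatible family in $\mathcal{G}'$ and $x\in \dom\bigcup f_i$, pick any $i$ with $x\in \dom f_i$ and use the local witness for $f_i$; it witnesses the combination at $x$ as well. Since $\mathcal{G}$ is the smallest pseudogroup containing $S$, we get $\mathcal{G}\subset\mathcal{G}'$, which is the forward direction.

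For the reverse direction, suppose the local condition holds. Let $\mathcal{G}''$ be the pseudogroup generated by $S$; I want $\mathcal{G}\subset\mathcal{G}''$ (the reverse inclusion is automatic since $\mathcal{G}$ is \emph{a} pseudogroup containing $S$). Given $g\in\mathcal{G}$, the hypothesis supplies, for each $x\in\dom g$, an open neighborhood $U_x$ with $g|_{U_x}\in S^*\subset \mathcal{G}''$. The family $\{g|_{U_x}\}_{x\in\dom g}$ is pairwise compatible on overlaps since all members agree with $g$, so by closure of $\mathcal{G}''$ under combinations, $g=\bigcup_x g|_{U_x}\in\mathcal{G}''$.

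There is no real obstacle here; the only thing to be careful about is that $S^*$ already contains $\id_X$ (as the empty composition, or equivalently because pseudo{\textasteriskcentered}groups are required to contain the identity), which is needed when checking $\id_X\in\mathcal{G}'$. The proof is essentially bookkeeping, but conceptually it makes transparent the role of axiom~(v): the pseudogroup $\mathcal{G}$ generated by $S$ is obtained from the pseudo{\textasteriskcentered}group $S^*$ by adjoining all ``sheaf-theoretic'' combinations, so membership in $\mathcal{G}$ is exactly the local condition ``locally in $S^*$''.
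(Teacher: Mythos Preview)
Your proof is correct and follows essentially the same approach as the paper's: both identify the pseudogroup generated by $S$ with the set of partial homeomorphisms that are locally in $S^*$, and the equivalence with ``$S$ generates $\mathcal{G}$'' then falls out. The paper's argument is more terse---it simply asserts that the set $\mathcal{H}$ of combinations of families in $S^*$ is the pseudogroup generated by $S$ and then invokes axiom~(v)$'$---whereas you spell out the verification of the pseudogroup axioms for your $\mathcal{G}'$; but the underlying idea is identical.
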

\begin{proof}
	Let $\mathcal{H}\subset \Ph(X)$ denote the set of maps that result from combining families of maps in $S^*$ using Axiom~\ref{d:pseudogroup}(\ref{i:combination}). In other words, $\mathcal{H}$ is the pseudogroup generated by $S$. Then $S$ generates $\mathcal{G}$ if and only if $\mathcal{H}=\mathcal{G}$; that is, every map in $\mathcal{G}$ can be obtained as the combination of a family of maps in $S^*$, but this follows from the hypothesis and Axiom~\ref{d:pseudogroup}(\ref{i:combination})$'$. 
\end{proof}

\begin{corollary}\label{c:restrictionsstar}
	Let $S$ be a generating set for $\mathcal{G}\acts X$, let $d$ be a compatible metric on $X$,  let $g\in \mathcal{G}$, and let $K\subset \dom g$ be compact. Then there is $\epsilon>0$ such that, for every $x\in K$, the restriction of $g$ to $B_d(x,\epsilon)$ belongs to $S^*$.
\end{corollary}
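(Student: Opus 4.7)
The plan is to combine Lemma \ref{l:generate} with a Lebesgue-number style argument, taking advantage of the compactness of $K$ to upgrade the pointwise existence of ``good'' neighborhoods into a uniform radius.

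First, I would invoke Lemma \ref{l:generate} to produce, for every $x\in K$, an open neighborhood $U_x\subset \dom g$ of $x$ with $g|_{U_x}\in S^*$. For each such $x$, since $U_x$ is open in the metric space $(X,d)$, I would pick $\epsilon_x>0$ small enough that $B_d(x,2\epsilon_x)\subset U_x$.

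Next, I would use compactness of $K$: the collection $\{B_d(x,\epsilon_x)\}_{x\in K}$ is an open cover of $K$, so it admits a finite subcover $B_d(x_1,\epsilon_{x_1}),\dots,B_d(x_n,\epsilon_{x_n})$. Set
\[
\epsilon=\min_{1\le i\le n}\epsilon_{x_i}>0.
\]
For an arbitrary $x\in K$, choose $i$ with $x\in B_d(x_i,\epsilon_{x_i})$; then by the triangle inequality,
\[
B_d(x,\epsilon)\subset B_d(x,\epsilon_{x_i})\subset B_d(x_i,2\epsilon_{x_i})\subset U_{x_i}.
\]
Since $g|_{U_{x_i}}\in S^*$ and $S^*$ is closed under restrictions to open subsets, $g|_{B_d(x,\epsilon)}=(g|_{U_{x_i}})|_{B_d(x,\epsilon)}\in S^*$, which is the desired conclusion.

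There is no real obstacle here beyond checking that the standard Lebesgue-number argument goes through in this pseudogroup setting; the only point that could trip one up is to remember to shrink from $2\epsilon_x$ to $\epsilon_x$ so that the finite subcover yields a single uniform radius that works for every $x\in K$ and not only for the finitely many $x_i$.
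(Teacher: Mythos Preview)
Your proof is correct and is exactly the intended argument: the paper states this as an immediate corollary of Lemma~\ref{l:generate} without writing out a proof, and the Lebesgue-number argument you give is the standard way to pass from the pointwise statement of that lemma to a uniform radius over the compact set $K$.
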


\subsection{Equivalences}\label{ss:equivalences}
If we are to use pseudogroups to study foliated spaces, the right notion of isomorphism is that of \emph{equivalence}, sometimes also referred to as \emph{Haefliger} or \emph{\'etale equivalence}.

\begin{definition}\label{d:equiv}
	Let $\mathcal{G}\acts X$ and $\mathcal{H}\acts Y$ be pseudogroups. An \emph{equivalence} $\Phi\colon (X,\mathcal{G})\to (Y,\mathcal{H})$ is a collection of partial homeomorphisms $\Phi\subset\Ph(X,Y)$ satisfying the following conditions.
	\begin{enumerate}[(i)]
		\item \label{i:equivcover}$\{\,\dom \phi\mid\phi\in\Phi\,\}$ and $\{\,\im \phi\mid\phi\in\Phi\,\}$ are open coverings of $X$ and $Y$, respectively.
		\item \label{i:equivrest}If $\phi\in\Phi$ and $U$ is an open subset of $\dom \phi$, then $\phi|_{U}\in \Phi$.
		\item \label{i:equivcomb}Let $\phi\in\Ph(X,Y)$. If there is an open covering $\{U_i\}_{i\in I}$ of $\dom \phi$ such that $\phi|_{U_i}\in\Phi$ for every $i\in I$, then $\phi\in\Phi$.
		\item \label{i:equivcomp}If $g\in\mathcal{G}$, $h\in \mathcal{H}$, and $\phi\in\Phi$, then $h\phi g\in \Phi$.
		\item \label{i:equivgamma}If  $\phi,\psi\in\Phi$, then $\psi^{-1}\phi\in\mathcal{G}$ and $\psi \phi^{-1}\in \mathcal{H}$.
	\end{enumerate}
\end{definition}

The following properties follow immediately from the definition.
\begin{lemma}\label{l:equivproperties}
	Let $\Phi\colon (X,\mathcal{G})\to (Y,\mathcal{H})$ and $\Psi\colon (Y,\mathcal{H})\to (Z,\mathcal{I})$ be equivalences. Then the \emph{inverse} 
	\[
	\Phi^{-1}:=\{\, \phi^{-1}\mid\phi\in \Phi \,\}\subset \Ph(Y,X)
	\]
	and the \emph{composition}
	\[
	\Psi\circ\Phi:=\{\,\psi\circ\phi\mid\phi\in\Phi,\ \psi\in\Psi\,\}\subset\Ph(X,Z)
	\]
	are equivalences $(Y,\mathcal{H})\to (X,\mathcal{G})$ and $ (X,\mathcal{G})\to(Z,\mathcal{I})$, respectively.
\end{lemma}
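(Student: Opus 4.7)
The plan is to verify each of the five axioms of Definition~\ref{d:equiv} for $\Phi^{-1}\subset\Ph(Y,X)$ and $\Psi\circ\Phi\subset\Ph(X,Z)$. For $\Phi^{-1}$, every axiom is obtained from the corresponding axiom for $\Phi$ by taking inverses. Axiom (i) swaps the roles of domain and image coverings; axiom (ii) uses the identity $\phi^{-1}|_V=(\phi|_{\phi^{-1}(V)})^{-1}$ together with the openness of $\phi^{-1}(V)\subset\dom\phi$; axiom (iii) is symmetric under inversion; axiom (iv) follows from $f\phi^{-1}g=(g^{-1}\phi f^{-1})^{-1}$ and closure of $\mathcal{G}$ and $\mathcal{H}$ under inversion; and axiom (v) is already symmetric in $\phi$ and $\psi$.

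For $\Psi\circ\Phi$, axioms (i), (ii), (iv), and (v) are direct. For (i), given $x\in X$ one chooses $\phi\in\Phi$ with $x\in\dom\phi$, then $\psi\in\Psi$ with $\phi(x)\in\dom\psi$, and restricts $\phi$ to $\phi^{-1}(\dom\psi)$ to obtain an element of $\Psi\circ\Phi$ whose domain contains $x$ (and symmetrically for images). Axiom (ii) uses $(\psi\phi)|_U=\psi\circ(\phi|_U)$. For (iv), one decomposes $g(\psi\phi)f=(g\psi)(\phi f)$ and applies axiom (iv) to each of $\Psi$ and $\Phi$, using $\id\in\mathcal{H}$ to fill the ``missing'' factor. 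For (v), given $\psi_i\phi_i\in\Psi\circ\Phi$ for $i=1,2$, one writes
\[
(\psi_2\phi_2)^{-1}(\psi_1\phi_1)=\phi_2^{-1}(\psi_2^{-1}\psi_1)\phi_1;
\]
axiom (v) for $\Psi$ gives $h:=\psi_2^{-1}\psi_1\in\mathcal{H}$, axiom (iv) for $\Phi$ gives $h\phi_1\in\Phi$, and axiom (v) for $\Phi$ then delivers $\phi_2^{-1}(h\phi_1)\in\mathcal{G}$. The claim $(\psi_1\phi_1)(\psi_2\phi_2)^{-1}\in\mathcal{I}$ is handled by the mirror computation, using $\phi_1\phi_2^{-1}\in\mathcal{H}$ and axiom (iv)/(v) for $\Psi$.

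The main obstacle is axiom (iii) for $\Psi\circ\Phi$: given $\chi\in\Ph(X,Z)$ and an open cover $\{U_i\}$ of $\dom\chi$ with local factorizations $\chi|_{U_i}=\psi_i\phi_i$, one must produce a single global factorization $\chi=\psi\phi$ with $\phi\in\Phi$ and $\psi\in\Psi$. The intermediate maps $\phi_i,\phi_j$ need not agree on overlaps; by axiom (v) for $\Psi$ they only satisfy the cocycle relation $\phi_j=h_{ji}\phi_i$ on $U_i\cap U_j$, with $h_{ji}:=\psi_j^{-1}\psi_i\in\mathcal{H}$. The plan is to refine the cover and, using axiom (iv) of $\Phi$, left-compose each $\phi_i$ with an appropriate element of $\mathcal{H}$ so that the modified maps coincide on overlaps; axiom (iii) for $\Phi$ then combines them into a single $\phi\in\Phi$, while the compensating corrections on the other side combine via axiom (iii) for $\Psi$ into a single $\psi\in\Psi$ with $\chi=\psi\phi$. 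The delicate point is organizing these adjustments coherently across triple overlaps (where the cocycle compatibility $h_{kj}h_{ji}=h_{ki}$ must be respected), and this is where the real work of the lemma lies.
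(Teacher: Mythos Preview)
The paper provides no proof, stating only that the properties ``follow immediately from the definition.'' Your verification of axioms (i), (ii), (iv), (v) for both $\Phi^{-1}$ and $\Psi\circ\Phi$ is correct and already more thorough than the paper's treatment.

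You are right that axiom~(iii) for $\Psi\circ\Phi$ is the crux---but your plan cannot work, because axiom~(iii) actually \emph{fails} for $\Psi\circ\Phi$ as literally defined. Take $X=Z=\mathbb{S}^1$, $Y=\mathbb{R}$, with $\mathcal{G}=\mathcal{I}$ the trivial pseudogroup and $\mathcal{H}$ generated by $t\mapsto t+1$; let $\Psi$ be the equivalence of Example~\ref{e:rz} and $\Phi=\Psi^{-1}$ (local sections of the covering $\pi\colon\mathbb{R}\to\mathbb{S}^1$). Every composite $\psi\phi\in\Psi\circ\Phi$ is then $\id_W$ for some proper open $W\subsetneq\mathbb{S}^1$, since no global section of $\pi$ exists; hence $\id_{\mathbb{S}^1}$ lies locally but not globally in $\Psi\circ\Phi$. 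In this example your cocycle $h_{ji}=\phi_j\phi_i^{-1}$ records the monodromy of the cover and is not a coboundary, so no refinement or adjustment of the $\phi_i$ can make them agree on overlaps. The fix---and the paper's tacit intention---is to read $\Psi\circ\Phi$ as the equivalence \emph{generated} by the compositions (via Lemma~\ref{l:morphismgenerate}, or equivalently by closing under combinations); with that reading the lemma really is immediate and suffices for its only use, namely transitivity of pseudogroup equivalence.
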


\begin{lemma}\label{l:equivid}
	Let $\mathcal{G}\acts X$ be a pseudogroup and let $U\subset X$ be an open set that meets every $\mathcal{G}$-orbit. Then 
	\[
	\Phi:=\{\,g\in\mathcal{G}\mid \dom g\subset U\,\}
	\]
	is an equivalence $\Phi\colon (U,\mathcal{G|}_U)\to (X,\mathcal{G})$; in particular,  $\mathcal{G}$ is an equivalence $(X,\mathcal{G})\to (X,\mathcal{G})$.
\end{lemma}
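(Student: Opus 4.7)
The plan is to verify directly the five axioms of Definition~\ref{d:equiv} for the set $\Phi=\{\,g\in\mathcal{G}\mid\dom g\subset U\,\}$, regarded as a subset of $\Ph(U,X)$ (via the obvious identification). Throughout, I use that $\mathcal{G}|_U$ consists of those $g\in\mathcal{G}$ with both $\dom g\subset U$ and $\im g\subset U$. No deep machinery is required: each axiom is a direct consequence of one or two of the pseudogroup axioms, together with the orbit-meeting hypothesis on $U$.

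For the covering axiom~(\ref{i:equivcover}), the domain cover is immediate: for any $x\in U$ and any open $V$ with $x\in V\subset U$, one has $\id_V\in\mathcal{G}$ (by the identity and restriction axioms), and clearly $\id_V\in\Phi$. For the image cover, let $y\in X$; by hypothesis the orbit of $y$ meets $U$, so there exist $x\in U$ and $g\in\mathcal{G}$ with $x\in\dom g$ and $g(x)=y$. Choosing an open neighborhood $V$ of $x$ contained in $U\cap\dom g$, the restriction $g|_V$ lies in $\Phi$ and has $y$ in its image. This is the only place where the orbit-meeting hypothesis is used.

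Axioms~(\ref{i:equivrest}) and~(\ref{i:equivcomb}) follow at once from the corresponding closure properties of $\mathcal{G}$ (restriction and combination, respectively), together with the observation that the condition $\dom\phi\subset U$ is preserved under restriction and under taking unions of open sets all of which are contained in $U$. For axiom~(\ref{i:equivcomp}), given $f\in\mathcal{G}|_U$, $g\in\mathcal{G}$, and $\phi\in\Phi$, the composition $g\phi f$ belongs to $\mathcal{G}$ by closure under composition and inversion, and $\dom(g\phi f)\subset\dom f\subset U$ since $f\in\mathcal{G}|_U$. For axiom~(\ref{i:equivgamma}), if $\phi,\psi\in\Phi$, then $\psi^{-1}\phi\in\mathcal{G}$ with $\dom(\psi^{-1}\phi)\subset\dom\phi\subset U$ and $\im(\psi^{-1}\phi)\subset\im\psi^{-1}=\dom\psi\subset U$, so $\psi^{-1}\phi\in\mathcal{G}|_U$; and $\psi\phi^{-1}\in\mathcal{G}$ similarly.

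The ``in particular'' statement is the special case $U=X$, for which $\Phi=\mathcal{G}$ and $\mathcal{G}|_U=\mathcal{G}$, so the first part yields the conclusion. There is no substantial obstacle; the only subtlety worth flagging in writing is the asymmetry in the definition of $\Phi$ (only $\dom\phi\subset U$ is required), which is exactly what matches axiom~(\ref{i:equivgamma}) when one notes that in products $\psi^{-1}\phi$ landing in $\mathcal{G}|_U$ the image constraint is automatic because $\dom\psi\subset U$.
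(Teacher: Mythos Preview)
Your proof is correct and follows exactly the same approach as the paper: a direct verification of the five axioms of Definition~\ref{d:equiv}, with axiom~(\ref{i:equivcover}) using the orbit-meeting hypothesis and axioms~(\ref{i:equivrest})--(\ref{i:equivgamma}) following from the pseudogroup axioms for $\mathcal{G}$. The paper's own proof is a two-line summary of precisely this argument, so you have simply spelled out the details it leaves implicit.
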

\begin{proof}
	Item~(\ref{i:equivcover}) in Definition~\ref{d:equiv} follows from the assumption that $U$ meets every $\mathcal{G}$-orbit, whereas~(\ref{i:equivrest})--(\ref{i:equivgamma}) hold because $\mathcal{G}$ is a pseudogroup.
\end{proof}

Pseudogroup equivalences are maximal families in the following sense.

\begin{lemma}\label{l:equivmaximal}
	Let $\Phi, \Psi$ be equivalences $(X,\mathcal{G})\to (Y,\mathcal{H})$. If $\Phi \subset \Psi$, then $\Phi=\Psi$.
\end{lemma}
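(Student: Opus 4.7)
The plan is to prove $\Psi \subset \Phi$ by showing that each $\psi \in \Psi$ can be locally written as a composition involving an element of $\Phi$ and an element of $\mathcal{H}$, which forces $\psi|_{V} \in \Phi$ on appropriate open neighborhoods $V$, and then invoking the sheaf-like combination axiom~\ref{d:equiv}(\ref{i:equivcomb}) to conclude $\psi \in \Phi$.

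Concretely, I would fix $\psi \in \Psi$ and an arbitrary point $x \in \dom \psi$, and proceed as follows. First I would invoke axiom~\ref{d:equiv}(\ref{i:equivcover}) for $\Phi$ to find some $\phi \in \Phi$ with $x \in \dom \phi$. Since by hypothesis $\Phi \subset \Psi$, both $\phi$ and $\psi$ lie in $\Psi$, so axiom~\ref{d:equiv}(\ref{i:equivgamma}) applied to $\Psi$ yields $h := \psi \phi^{-1} \in \mathcal{H}$. Setting $V := \dom \phi \cap \dom \psi$, which is open and contains $x$, axiom~\ref{d:equiv}(\ref{i:equivrest}) for $\Phi$ gives $\phi|_V \in \Phi$. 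Then axiom~\ref{d:equiv}(\ref{i:equivcomp}) applied with $f = \id_X \in \mathcal{G}$, $g = h \in \mathcal{H}$, and $\phi|_V \in \Phi$ yields $h \circ \phi|_V \circ \id_X \in \Phi$. A short domain check shows that this composition equals $\psi|_V$, using the identity $\phi^{-1}(\phi(A)) = A$ for $A \subset \dom \phi$. Therefore $\psi|_V \in \Phi$.

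Having produced, for every $x \in \dom \psi$, an open neighborhood $V_x \subset \dom \psi$ with $\psi|_{V_x} \in \Phi$, the family $\{V_x\}_{x \in \dom \psi}$ is an open covering of $\dom \psi$ on which $\psi$ restricts to elements of $\Phi$. Axiom~\ref{d:equiv}(\ref{i:equivcomb}) then forces $\psi \in \Phi$, completing the proof.

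I do not foresee a significant obstacle here: the argument is a direct bookkeeping exercise with the five axioms of an equivalence. The only mild care needed is in the domain computation that verifies $h \circ \phi|_V = \psi|_V$ as partial homeomorphisms (not merely as pointwise-defined maps where both sides are defined), and in remembering that axiom~\ref{d:equiv}(\ref{i:equivcomp}) requires a composition of the form $g \phi f$ with $f \in \mathcal{G}$, which is why I insert the harmless $f = \id_X$.
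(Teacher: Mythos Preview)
Your proof is correct and follows essentially the same approach as the paper. The only cosmetic difference is a mirror symmetry: the paper covers $\im\psi$ by sets $\im\phi_i$ and uses $\phi_i^{-1}\psi\in\mathcal{G}$ to write $\psi$ locally as $\phi_i\circ(\phi_i^{-1}\psi)$, whereas you cover $\dom\psi$ by sets $\dom\phi$ and use $\psi\phi^{-1}\in\mathcal{H}$ to write $\psi$ locally as $(\psi\phi^{-1})\circ\phi|_V$; both then invoke~\ref{d:equiv}(\ref{i:equivcomp}) and~\ref{d:equiv}(\ref{i:equivcomb}) in the same way.
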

\begin{proof}
Let $\psi\in \Psi$. Since $\{\,\im \phi\mid\phi\in\Phi\,\}$ covers $Y$, there is a family $\{\phi_i\}_{i\in I} \subset \Phi$ such that $\{\im \phi_i\}_{i\in I}$ covers $\im \psi$. Moreover, $\phi_i\in \Psi$ by hypothesis, so $\phi_i^{-1}\psi\subset \mathcal{G}$ by~\ref{d:equiv}(\ref{i:equivgamma}), and then $\psi|_{\im \phi_i}= \phi_i(\phi_i^{-1}\psi)$ belongs to $\Phi$ by~\ref{d:equiv}(\ref{i:equivcomp}). Finally, $\psi$ is the combination of the family $\{\psi|_{\im \phi_i}\}_{i\in I}$, so $\psi\in \Phi$ by~\ref{d:equiv}(\ref{i:equivcomb}). This shows $\Psi\subset \Phi$ because $\psi$ was chosen arbitrarily.
\end{proof}

\begin{lemma}\label{l:morphismgenerate}
	Let $\mathcal{G}\acts X$ and $\mathcal{H}\acts Y$ be pseudogroups, and let $\Sigma\subset\Ph(X,Y)$ be a family of maps such that
	\begin{enumerate}[(i)]
		\item \label{i:equivgenmeetg}$\bigcup_{\phi\in\Sigma} \dom \phi\subset X$ meets every $\mathcal{G}$-orbit;
		\item \label{i:equivgenmeeth}$\bigcup_{\phi\in\Sigma} \im \phi\subset Y$ meets every $\mathcal{H}$-orbit; and,
		\item \label{i:equivgengamma}if $\phi$, $\psi\in\Sigma$, $g\in \mathcal{G}$, and $h\in\mathcal{H}$, then $\psi^{-1}h\phi\in\mathcal{G}$ and $\psi g\phi^{-1}\in \mathcal{H}$.
	\end{enumerate}
	Then there is a unique equivalence $\Phi\colon (X,\mathcal{G})\to (Y,\mathcal{H})$ containing $\Sigma$.
\end{lemma}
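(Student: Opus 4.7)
The plan is to define $\Phi$ explicitly as the set of all $\phi\in\Ph(X,Y)$ such that every $x\in\dom\phi$ has an open neighborhood $U$ on which $\phi|_{U}$ factors as $h\sigma g$ for some $g\in\mathcal{G}$, $\sigma\in\Sigma$, and $h\in\mathcal{H}$. Intuitively, $\Phi$ is the smallest family of partial homeomorphisms that contains $\Sigma$ and is closed under pre- and post-composition with $\mathcal{G}$ and $\mathcal{H}$, under restriction, and under combination; any equivalence containing $\Sigma$ is forced by Definition~\ref{d:equiv}(ii)--(iv) to contain this family.

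My first step would be to verify the five axioms of Definition~\ref{d:equiv} for $\Phi$. Closure under restriction~(ii) and combination~(iii) are built into the definition, since it is local in $\dom\phi$. For~(iv), given $\phi\in\Phi$, $f\in\mathcal{G}$, $k\in\mathcal{H}$, and $x\in\dom(k\phi f)$, the point $f(x)\in\dom\phi$ admits a neighborhood on which $\phi=h\sigma g$; on the $f$-preimage we get $k\phi f=(kh)\sigma(gf)$, with $kh\in\mathcal{H}$ and $gf\in\mathcal{G}$. The covering axiom~(i) uses hypotheses~(i) and~(ii) of the lemma: for any $x\in X$ there exists $g\in\mathcal{G}$ with $g(x)\in\dom\sigma$ for some $\sigma\in\Sigma$, whence $\sigma g\in\Phi$ has $x$ in its domain, and symmetrically every $y\in Y$ lies in the image of some element of $\Phi$.

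The main obstacle is axiom~(v): given $\phi,\psi\in\Phi$, I must show $\psi^{-1}\phi\in\mathcal{G}$ and $\psi\phi^{-1}\in\mathcal{H}$. The two statements are symmetric, so fix $x\in\dom(\psi^{-1}\phi)$. On sufficiently small neighborhoods around $x$ and around $\phi(x)$, write $\phi=h_{1}\sigma_{1}g_{1}$ and $\psi=h_{2}\sigma_{2}g_{2}$ with $\sigma_{i}\in\Sigma$, $g_{i}\in\mathcal{G}$, $h_{i}\in\mathcal{H}$. Then on a neighborhood of $x$,
\[
\psi^{-1}\phi \;=\; g_{2}^{-1}\,\sigma_{2}^{-1}(h_{2}^{-1}h_{1})\sigma_{1}\,g_{1}.
\]
Hypothesis~(iii) of the lemma, applied with $\phi=\sigma_{1}$, $\psi=\sigma_{2}$, $h=h_{2}^{-1}h_{1}\in\mathcal{H}$, places the central factor $\sigma_{2}^{-1}(h_{2}^{-1}h_{1})\sigma_{1}$ in $\mathcal{G}$, and composing with $g_{2}^{-1},g_{1}\in\mathcal{G}$ keeps the result in $\mathcal{G}$. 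Thus $\psi^{-1}\phi$ coincides locally with an element of $\mathcal{G}$ at every point of its domain, and Axiom~\ref{d:pseudogroup}(v)$'$ yields $\psi^{-1}\phi\in\mathcal{G}$.

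Uniqueness is then a formal consequence of Lemma~\ref{l:equivmaximal}: any equivalence $\Psi\colon(X,\mathcal{G})\to(Y,\mathcal{H})$ containing $\Sigma$ is, by Definition~\ref{d:equiv}(ii)--(iv), closed under restriction, combination, and pre/post-composition with $\mathcal{G}$ and $\mathcal{H}$, so it contains every $h\sigma g$ and their combinations; that is, $\Phi\subset\Psi$. Maximality then forces $\Phi=\Psi$.
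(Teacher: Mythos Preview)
Your proof is correct and follows essentially the same approach as the paper: both define $\Phi$ as the set of combinations of maps of the form $h\sigma g$ (your local formulation is equivalent to this), verify the equivalence axioms, and invoke Lemma~\ref{l:equivmaximal} for uniqueness. Your write-up is considerably more detailed than the paper's, which dispatches axioms (i)--(v) in a single sentence, but the underlying argument is the same.
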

\begin{proof}
Let $\Phi\subset \Ph(X,Y)$ consist of the combinations of maps of the form $h\sigma g$, where $g\in \mathcal{G}$, $h\in \mathcal{H}$, and $\sigma \in \Sigma$; then $\Phi$ is an equivalence: Axiom~\ref{d:equiv}(\ref{i:equivcover}) follows from~(\ref{i:equivgenmeetg}) and~(\ref{i:equivgenmeeth}), \ref{d:equiv}(\ref{i:equivrest})--(\ref{i:equivcomp})  follow from the definition of $\Phi$, and ~\ref{d:equiv}(\ref{i:equivgamma}) follows from~(\ref{i:equivgengamma}).  Finally, Lemma~\ref{l:equivmaximal} yields uniqueness.
\end{proof}

We will refer to the equivalence given by Lemma~\ref{l:morphismgenerate} as the equivalence \emph{generated} by $\Sigma$. We say that two pseudogroups are \emph{equivalent} if there is an equivalence from one to the other; this is a reflexive, symmetric, and transitive relation by Lemmas~\ref{l:equivproperties} and~\ref{l:equivid}. The reader should be mindful that equivalence of pseudogroups is a very lax condition, as the next example shows.
\begin{example}[{\cite[{p.\ 277}]{Haefliger}}]\label{e:rz}
	Let $\mathcal{G}$ be the pseudogroup on $\mathbb{R}$ generated by the translation $t\mapsto t+1$, and let $\mathcal{H}$ be the pseudogroup on $\mathbb{S}^1$ generated by the identity map. Consider the natural projection map $\pi\colon\mathbb{R}\to \mathbb{R}/\mathbb{Z}\cong\mathbb{S}^1$. Then
	\[
	\Phi:=\{\,\pi|_{U}\mid U\subset \mathbb{R}\ \text{open},\ \pi|_{U}\colon U\to \phi(U)\ \text{is a homeomorphism}\,\}
	\]
	is an equivalence from $(\mathbb R,\mathcal{G})$ to $(\mathbb{S}^1,\mathcal{H})$.
\end{example}

Finally, if $X$ and $Y$ are $C^i$-manifolds for some $i\in \mathbb{N}\cup\{\infty,\omega\}$\footnote{The notation $C^\omega$ means that the manifold or map is analytic}, we say that a family $A\subset \Ph(X,Y)$ is $C^i$ if all the maps in $A$ are $C^i$ in the usual sense. In this way we obtain a definition of $C^i$-pseudogroups and equivalences, and  the notion of being a $C^i$-pseudogroup is then invariant by $C^i$-equivalences. Similarly, if $X$ and $Y$ are affine manifolds, we can define affine pseudogroups and equivalences, and being an affine pseudogroup is invariant by affine equivalences.

\subsection{Compact generation}\label{ss:compactlygenerated}

\begin{definition}
	Let $\mathcal{G}\acts X$ be a pseudogroup.
	A \emph{system of compact generation}   is a triple $(U,F,\widetilde F)$, where 
	\begin{enumerate}[(i)]
		\item $U$ is a relatively compact open set of $X$ meeting every $\mathcal{G}$-orbit,
		\item both $F\subset\mathcal{G}|_U$ and $\widetilde F\subset\mathcal{G}$ are finite and symmetric,
		\item $F$ generates $\mathcal{G}|_U$, and
		\item there is a bijection $f\mapsto \tilde f$ ($f\in F$, $\tilde f\in\widetilde F$) 	where $\tilde f$ is an extension of $f$ and $\overline{\dom f}\subset \dom(\tilde f)$ for every $f\in F$. 
	\end{enumerate}

\end{definition}

We say that $\mathcal{G}$ is \emph{compactly generated} if it admits a system of compact generation; note that compact generation implies that $X$ is locally compact. This property is invariant by equivalences~\cite{Haefliger2}.
The main family of examples of compactly generated pseudogroups, which moreover gave birth to the definition, consists of holonomy pseudogroups of compact foliated spaces (see next section). As a simpler example, we could mention the pseudogroup generated by the action of a finitely generated group on a compact space.

Contrary to most of the references in the subject, we consider the set of extensions $\widetilde{F}$ as part of the generating set. 
Note that the symmetry condition of both $F$ and $\widetilde{F}$ is included for simplicity.

From now on, for every map $f\in \langle F\rangle$, $f=f_{n} \cdots f_{1}$ with $f_{i}\in F$, we denote by $\tilde{f}\in\langle\widetilde{F}\rangle$ the composition $\tilde f_{n}\cdots \tilde f_{1}$. For notational convenience, we will assume from now on that $\tilde{f}^{-1}=\widetilde{f^{-1}}$. Properly speaking, the map $\tilde f$ depends not only on $f$, but on the representation $f=f_{n} \cdots f_{1}$; we will incur in this slight abuse of notation anyway because this subtlety will be of no relevance to our proofs. 

\begin{lemma}\label{l:systemcg}
	Let $\mathcal{G}$ be a compactly generated pseudogroup, let $x\in X$, and let $S$ be a generating set. Then there is a system of compact generation $(U,F,\widetilde F)$ with $x\in U$ and $\widetilde F^*\subset S^*$.
\end{lemma}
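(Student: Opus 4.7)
The plan is to start with an arbitrary system of compact generation $(U_{0},F_{0},\widetilde F_{0})$ for $\mathcal G$ and modify it twice: first, enlarge $U_{0}$ so that it contains $x$; second, replace each generator by finitely many restrictions whose extensions are forced to lie in $S^{*}$. Fix a compatible metric $d$ on $X$ throughout.

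For the first modification, since $U_{0}$ meets the orbit of $x$, I would choose $h\in\mathcal G$ with $x\in\dom h$ and $h(x)\in U_{0}$, then shrink $h$ to a relatively compact open neighborhood $V$ of $x$ with $\overline V\subset\dom h$ and $h(\overline V)\subset U_{0}$, and set $U:=U_{0}\cup V$. Then $U$ is relatively compact, contains $x$, and meets every $\mathcal G$-orbit. The next task is to verify that $F_{0}\cup\{h|_{V},(h|_{V})^{-1}\}$ generates $\mathcal G|_{U}$: by Lemma~\ref{l:generate} it suffices to show that, locally around any $y\in\dom g$ with $g\in\mathcal G|_{U}$, the map $g$ belongs to the pseudo{\textasteriskcentered}group generated by $F_{0}\cup\{h|_{V},(h|_{V})^{-1}\}$. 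This is done by a four-way case analysis on whether $y$ and $g(y)$ lie in $V$ or in $U_{0}$, conjugating by $h$ (or $h^{-1}$) when needed to reduce to a map between points of $U_{0}$, which is locally in $F_{0}^{*}$ by hypothesis.

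For the second modification, I would refine each $f\in F_{0}\cup\{h|_{V},(h|_{V})^{-1}\}$ individually. Its original extension $\tilde f_{0}$---an element of $\widetilde F_{0}$, or a restriction of $h$ or $h^{-1}$ in the remaining cases---satisfies $\overline{\dom f}\subset\dom\tilde f_{0}$. Applying Corollary~\ref{c:restrictionsstar} to $\tilde f_{0}$ and the compact set $K=\overline{\dom f}$ yields $\epsilon>0$ such that $\tilde f_{0}|_{B(y,\epsilon)}\in S^{*}$ for every $y\in K$. Covering $K$ by finitely many balls $B(y_{1},\epsilon/3),\dots,B(y_{n},\epsilon/3)$, I would set
\[
f_{i}:=f|_{B(y_{i},\epsilon/3)\cap\dom f},\qquad \tilde f_{i}:=\tilde f_{0}|_{B(y_{i},\epsilon)}\in S^{*}.
\]
Since $\overline{\dom f_{i}}\subset\overline{B(y_{i},\epsilon/3)}\subset B(y_{i},\epsilon)=\dom\tilde f_{i}$, the pair $(f_{i},\tilde f_{i})$ satisfies the required domain-closure condition.

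Finally, I would assemble $F$ as the finite symmetric collection of all such $f_{i}$ and $\widetilde F$ as the corresponding $\tilde f_{i}$; symmetry is preserved because inversion stays inside $S^{*}$. The set $F$ still generates $\mathcal G|_{U}$ since each $f\in F_{0}\cup\{h|_{V},(h|_{V})^{-1}\}$ is recovered as the combination of its restrictions $f_{i}$, and $\widetilde F\subset S^{*}$ gives $\widetilde F^{*}\subset(S^{*})^{*}=S^{*}$, completing the construction. The main obstacle is the generating claim in the first step: checking that enlarging the transversal by $V$ does not destroy the generating property of $F_{0}$ once the bridging map $h|_{V}$ is added. Once that is in place, the remainder is a routine application of Corollary~\ref{c:restrictionsstar} to each generator.
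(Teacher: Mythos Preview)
Your proposal is correct and follows essentially the same two-step strategy as the paper: first enlarge the transversal to contain $x$ by adjoining a bridging map, then refine each generator into finitely many pieces whose extensions lie in $S^{*}$ via Corollary~\ref{c:restrictionsstar}. The paper is terser---it simply asserts that $(V\cup W,\,H\cup\{g|_{W}\},\,\widetilde H\cup\{g|_{W'}\})$ is a system of compact generation without spelling out the generating check, and in the second step it uses an abstract finite cover with a shrinking rather than your explicit $\epsilon/3$-balls---but the underlying argument is the same.
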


\begin{proof}
	We begin by showing that we can choose $(U,F,\widetilde F)$ with $x\in U$. Indeed, let $(V,H,\widetilde H)$ any system of compact generation and let $g\in\mathcal{G}$ be any map with $x\in\dom g$ and $g(x)\in V$. Let $W,W'$ be relatively compact open neighborhoods of $x$ with $\overline{W}\subset W'\subset \dom g$. Then $(V\cup W, H\cup \{g|_{W}\}, \widetilde{H}\cup\{g|_{W'}\})$ is a system of compact generation.
	
	Let us show now that we may take $(U,F,\widetilde F)$ with $\widetilde{F}^*\subset S^*$. Let $(U,H,\widetilde H)$ satisfy $x\in U$. Write $H=\{f_i\}_{i\in I}$; then, for every $i\in I$, there is a finite open cover $\{V_{i,j}\}_{j\in J_i}$ of $\overline{\dom f_i}$ and a shrinking $\{W_{i,j}\}_{j\in J_i}$ such that $\tilde f|_{V_{i,j}}\in S^*$ for every $j\in J_i$ by Corollary~\ref{c:restrictionsstar}. Then 
	\[
	(U,F,\widetilde F):=(U,\{\tilde f_i|_{W_{i,j}\cap U}\},\{\tilde f_i|_{V_{i,j}}\})
	\]
	is a system of compact generation satisfying the desired conditions.
\end{proof}

\subsection{Foliated spaces}\label{ss:foliated}

Let $X$ be a  Polish space and let $\mathcal{F}$ be a partition of $X$. Then $(X,\mathcal{F})$ is a \emph{foliated space} of \emph{leafwise class} $C^k$ ($k\in\mathbb{N}\cup\{\infty\}$)  and dimension $n\in \mathbb{N}$ if $X$  admits an atlas of charts $(U_i,\phi_i)$, where $\{U_i\}$ is an open covering of $X$ and the maps $\phi_i$ are homeomorphisms $\phi_i\colon U_i\to \mathbb{R}^n\times Z_i$ (for $Z_i$ Polish), and with coordinate changes  of the form
\[
\phi_i\phi_j^{-1}(x_j,z_j)=(x_i(x_j,z_j),z_i(z_j)),
\] 
where $z_i\colon \phi_j(U_i\cap U_j)\to Z_i$ is continuous and $x_i\colon \phi_j(U_i\cap U_j)\to\mathbb{R}^n$ is of class $C^k$ on every plaque. Remember that the \emph{plaques} of the chart $(U_i,\phi_i)$ are the sets $\phi_i^{-1}(\mathbb{R}^n\times\{z_i\})$. Moreover, we require that the equivalence relation induced by $\mathcal{F}$ coincides with the transitive closure of the relation ``being in the same plaque''; it follows that $\mathcal{F}$ partitions $X$ into subsets which, when endowed with an appropriate topology called the \emph{leaf topology}, become connected  $C^k$-manifolds of dimension $n$: the \emph{leaves} of the foliated space.  If $(X,\mathcal{F})$ is a foliation of dimension $n$, codimension $m$, and class $C^{k,l}$ (see~\cite[p.~32]{CandelConlon2000-I}), then it is an $n$-dimensional foliated space of leafwise class $C^k$, the transversal models $Z_i$ are $C^l$-manifolds of dimension $m$,  and the maps $z_i$ are of class $C^l$.

The \emph{holonomy pseudogroup} serves as a dynamical model for the foliated space $X$: let  $\{(U_i,\phi_i)\mid i\in I\}$ be a locally finite atlas, let $p_i$ denote the composition of $\phi_i$ with the projection $\mathbb{R}^n\times Z_i\to Z_i$, and let $Z=\coprod_{i\in I} Z_i$. The transversal components of the change of coordinate maps
\[
h_{i,j}\colon p_j(U_i\cap U_j)\to p_i(U_i\cap U_j)  ,\qquad h_{i,j}(z_j)=z_i(z_j)
\]
generate a pseudogroup in $Z$, called the \emph{holonomy pseudogroup} of $X$. Note that we are only considering holonomy pseudogroups induced by locally finite atlases so that the transversal space $Z$ is Polish and the pseudogroup is countably generated.

The holonomy pseudogroup depends on the choice of atlas  $\{(U_i,\phi_i)\mid i\in I\}$, but different choices give rise to equivalent pseudogroups (in the case of foliations of class $C^{k,l}$, $C^l$-equivalent pseudogroups). Thus, from now on, we restrict ourselves to considering properties of pseudogroups that are invariant by equivalences; this justifies our abuse of language when we talk about ``the'' holonomy pseudogroup of $X$.

\begin{lemma}[{\cite{Haefliger}}]
	If $X$ is a compact foliated space, then its holonomy pseudogroup is compactly generated. 
\end{lemma}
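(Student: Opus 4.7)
The plan is to build a system of compact generation $(U, F, \widetilde{F})$ directly from a sufficiently refined atlas. Since $X$ is compact and the atlas is locally finite, I may assume that the atlas $\{(U_i, \phi_i)\}_{i \in I}$ is itself finite. Applying the shrinking lemma twice produces open covers $\{V_i\}_{i\in I}$ and $\{W_i\}_{i\in I}$ of $X$ with $\overline{W_i} \subset V_i$ and $\overline{V_i} \subset U_i$ for every $i$. Writing $p_i \colon U_i \to Z_i$ for the transversal projections, I set $U = \coprod_{i\in I} p_i(V_i)$; because each $p_i$ is open and each $\overline{V_i}$ is compact, $U$ is open and relatively compact in the transversal space $Z$, and because $\{W_i\}$ still covers $X$ and $p_i(W_i) \subset U$, the set $U$ meets every $\mathcal{G}$-orbit.

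I then take $\widetilde{F} = \{h_{i,j}\}_{i,j \in I}$, the transition maps on their full domains $p_j(U_i\cap U_j)$, and $F = \{h_{i,j}|_{p_j(V_i \cap V_j)}\}_{i,j\in I}$. Both families are finite and symmetric (since $h_{j,i} = h_{i,j}^{-1}$), and $F\subset \mathcal{G}|_U$ because every $f\in F$ has domain and image inside $U$ by construction. For the extension condition $\overline{\dom f} \subset \dom \tilde{f}$, I use that $\overline{V_i \cap V_j} \subset \overline{V_i} \cap \overline{V_j} \subset U_i \cap U_j$ is compact, so its image $p_j(\overline{V_i \cap V_j})$ is a closed subset of $p_j(U_i \cap U_j)$ containing $p_j(V_i \cap V_j)$, and hence contains its closure.

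The remaining — and hardest — point is to show that $F$ generates $\mathcal{G}|_U$. By Lemma~\ref{l:generate}, it suffices to check that for every $g \in \mathcal{G}|_U$ and $x \in \dom g$, some neighborhood of $x$ is sent by $g$ via a composition of restrictions of elements of $F$. Geometrically, $g$ near $x$ is realized by the holonomy of a leaf path $\gamma$ from a plaque over $x$ to a plaque over $g(x)$; unfolding gives $g = h_{k_n, k_{n-1}} \circ \cdots \circ h_{k_1, k_0}$ for some chain of charts, but a priori the intermediate waypoints may leave the smaller sets $V_{k_l}$. To fix this I exploit the third cover: pulling $\gamma$ back to $[0,1]$, compactness produces a finite subcover of the path by sets $\gamma^{-1}(W_{k_l})$ whose consecutive members overlap, so each intermediate waypoint lies in $W_{k_l} \cap W_{k_{l+1}} \subset V_{k_l} \cap V_{k_{l+1}}$. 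The corresponding transition is therefore exactly an element of $F$, and composing them recovers $g$ on a neighborhood of $x$. The essential difficulty is precisely engineering the chain so that every consecutive overlap lies inside $V_{k_l}\cap V_{k_{l+1}}$ and not merely inside $U_{k_l}\cap U_{k_{l+1}}$; the two-step shrinking $W_i \subset V_i \subset U_i$ is introduced for exactly this reason.
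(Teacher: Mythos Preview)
The paper does not give a proof of this lemma at all: it is simply stated with the citation to Haefliger and left unproved. So there is no ``paper's own proof'' to compare against.

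Your argument is the standard one from Haefliger's paper and is correct in outline. Two small points deserve a bit more care when you write it up. First, when you re-cover the leaf path $\gamma$ by the $W_i$'s, you should make explicit that the endpoints are handled: the point $y\in V_j$ with $p_j(y)=x$ lies in some $W_{k_0}$, so $y\in W_{k_0}\cap V_j\subset V_{k_0}\cap V_j$ and hence the first transition $h_{k_0,j}|_{p_j(V_{k_0}\cap V_j)}\in F$ is already defined at $x$; the analogous remark applies at $g(x)$. Second, the assertion that the new plaque chain through the $W_{k_l}$'s computes the \emph{same} germ as the original composition relies on the fact that holonomy along a leaf path depends only on the path (not on the particular plaque chain realizing it), which in turn uses that each segment $\gamma([t_{l-1},t_l])$ lies in a single plaque of $U_{k_l}$; this is automatic for a regular foliated atlas, so you may want to invoke that refinement at the outset. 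With those clarifications your proof is complete.
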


If $X$ is a foliation of codimension $m$ and it admits an atlas such that the transversals have an affine structure and the maps $h_{i,j}$ are all affine, then it is a \emph{transversally affine foliation} and its holonomy pseudogroup is also affine.

A \emph{matchbox manifold} is a compact foliated space admitting an atlas with totally disconnected transversals.

\subsection{Toral linked twist maps}\label{s:linked}

Let $\mathbb{T}^2:=\mathbb{R}^2/\mathbb{Z}^2$ be the $2$-torus, whose points we will simply denote as pairs $(x,y)$, where $x, y\in\mathbb{R}^2$. For an interval $A=[a,b]$ with $0\leq a< b\leq 1$, let $H_A$ be the horizontal closed annulus defined by
\[
H_A=\{(x,y)\in \mathbb{T}^2\mid a\leq  y\leq b\},
\]
and let $V_A$ be the corresponding vertical closed annulus
\[
V_A=\{(x,y)\in \mathbb{T}^2\mid a\leq  x\leq b\}.
\]

For any integer $m>1$, we have the horizontal and vertical twist maps, defined  on $H_A$ and $V_A$, respectively, by
\[
(x,y)\mapsto (x+\phi_m(y),y),\qquad (x,y)\mapsto (x,y+\phi_m(x)),
\]
where 
\[
\phi_m(t)=\frac{m(t-a)}{(b-a)}
\]
is the only affine map satisfying 
\[
\phi_m(a)=0,\qquad \phi_m(b)=m.
\] 
Note that $\phi_m$ depends of course on the choice of interval $A$, but we leave it implicit to avoid cumbersome notation. Toral linked twists can be constructed with more general maps $\phi_m$ (see e.g.~\cite{Devaney}), but in this paper we  restrict our attention to the linear case  for the sake of simplicity.

A \emph{toral linked twist} is the composition of horizontal twist maps on a finite number of horizontal annuli with  vertical twists on a finite number of vertical annuli. Let $\widehat H_1,\ldots,\widehat H_k$ be a collection of closed intervals in $[0,1]$ such that every intersection $\widehat H_i\cap \widehat H_j$ with $i\neq j$ consists of at most one common endpoint, and let $\widehat V_1,\ldots,\widehat V_l$ be another such collection.  Let $H_1,\ldots, H_k$ be the horizontal closed annuli induced by the intervals $\widehat H_1,\ldots,\widehat H_k$; similarly, let $V_1,\ldots,V_l$ be the vertical closed annuli induced by $\widehat V_1,\ldots,\widehat V_l$. Choose two sequences of positive integers 
\[m_1,\ldots, m_k,\qquad n_1,\ldots,n_l,\]
 and, for $i=1,\ldots,k$, let $h_i$ denote the horizontal $m_i$-twist map on $H_i$,
\[
h_i(x,y)=(x+\phi_{m_i}(y),y).
\]
 Define the vertical $n_i$-twist maps  $v_j$, $1\leq j\leq l$, similarly; see Figure~\ref{fig:twist} for an illustration.

We combine the horizontal twists $h_i$ into one map $T_h$ and the vertical twists into another map $T_v$ as follows:
\begin{align*}
	T_h(x,y)&=\begin{cases}
		h_i(x,y) &\qquad\text{if}\ (x,y)\in H_i\ \text{for some}\ 1\leq i\leq k,\\
		(x,y) &\qquad\text{else}.
	\end{cases}\\
	T_v(x,y)&=\begin{cases}
		v_i(x,y) &\qquad\text{if}\ (x,y)\in V_i\ \text{for some}\ 1\leq i\leq l,\\
		(x,y) &\qquad\text{else}.
	\end{cases}
\end{align*}
The linked twist map corresponding to our choice of intervals and integers is then $T=T_v\circ T_h$. 

We review some of the basic properties that will be of use later. First, note that $T$ is the identity on $\mathbb{T}^2\setminus M$, where
\[
M=H_1\cup\cdots\cup H_k \cup V_1\cup \cdots\cup V_l.
\]
Also, $T$ is affine (hence smooth) on $\mathbb{T}^2\setminus\Delta$, where
\[
\Delta = \partial H_1\cup\cdots\cup\partial H_k \cup T_h^{-1}(\partial V_1)\cup \cdots\cup T_h^{-1}(\partial V_l)
\]
and $\partial$ denotes the topological boundary.
Finally, we will also employ the following result.
\begin{theorem}[{\cite[Thm.\ A]{Devaney80}}]\label{t:twisttt}
	The restriction of the toral linked twist map $T$ to $M$ is topologically transitive  and sensitive to initial conditions.
\end{theorem}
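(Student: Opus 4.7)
\medskip

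The plan is to exploit the hyperbolic behavior of $T$ on the ``core'' region of $M$ where the horizontal and vertical annuli overlap, and deduce both sensitivity and transitivity from this hyperbolicity. First, I would compute $DT$ at a smooth point $(x,y)\in H_i\cap V_j$ (outside $\Delta$). On $H_i$ one has $DT_h=\bigl(\begin{smallmatrix}1 & \alpha_i\\ 0 & 1\end{smallmatrix}\bigr)$ with $\alpha_i=m_i/|\widehat H_i|$, and on $V_j$ one has $DT_v=\bigl(\begin{smallmatrix}1 & 0\\ \beta_j & 1\end{smallmatrix}\bigr)$ with $\beta_j=n_j/|\widehat V_j|$. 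Their product
\[
DT=\begin{pmatrix} 1 & \alpha_i\\ \beta_j & 1+\alpha_i\beta_j\end{pmatrix}
\]
has determinant $1$ and trace $2+\alpha_i\beta_j>2$, so it is hyperbolic with real eigenvalues $\lambda_{i,j}>1>\lambda_{i,j}^{-1}>0$. Moreover, because every $\alpha_i,\beta_j$ is strictly positive, the unstable eigendirection sits in the open first/third-quadrant cone and the stable one in the second/fourth-quadrant cone, uniformly in $(i,j)$.

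Next, I would introduce invariant cone fields $C^u,C^s\subset TM$ at every smooth point, namely a fixed closed sector containing all unstable eigenlines and its stable counterpart. A short linear-algebra check shows that any matrix $DT$ as above maps $C^u$ into its own interior and expands vectors in $C^u$ by a factor at least $\lambda_{\min}>1$; similarly for $C^s$ and $DT^{-1}$. This step is essentially the classical verification that a product of positive shears is an Anosov matrix, and it is insensitive to which $(i,j)$ is chosen. Combining cone invariance across iterates, any orbit that avoids $\Delta$ and lies in $\bigcup_{i,j}(H_i\cap V_j)$ a positive density of the time inherits a uniform expansion rate along $C^u$.

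From this hyperbolic picture, sensitivity follows cleanly. Given $(x,y)\in M$ and $r>0$, I would produce a companion $(x',y')$ within distance $r$ whose displacement vector lies in the interior of $C^u$ at $(x,y)$. Because $C^u$ is $DT$-invariant and $DT$ expands vectors in $C^u$ by at least $\lambda_{\min}>1$ on each passage through $H_i\cap V_j$, and because any small neighborhood in $M$ contains points whose forward orbits must revisit such intersections infinitely often (an easy area argument using that $T$ preserves Lebesgue measure and the identity region is disjoint from $M$'s bulk), the separation $|T^n(x,y)-T^n(x',y')|$ grows until it exceeds any prescribed threshold $c$; taking $c$ less than half the injectivity radius of $\mathbb{T}^2$ gives the required sensitivity constant.

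For topological transitivity, I would follow the standard Markov-partition route: carve $M$ into a finite family of ``rectangles'' whose sides are short arcs of the stable and unstable foliations obtained by integrating $C^s,C^u$, arrange the cutting so that the image under $T$ of every rectangle crosses each rectangle it meets fully in the unstable direction, and verify that the resulting transition matrix is irreducible — irreducibility is where the \emph{linking} of the annuli enters, since it guarantees that starting from any rectangle one can reach any other by a sequence of horizontal/vertical twists chained through the overlaps $H_i\cap V_j$. Then $T|_M$ is semiconjugate to an irreducible subshift of finite type, which is topologically transitive, and semiconjugacy transports topological transitivity back to $T|_M$. The main obstacle is the discontinuity set $\Delta$ and the boundaries $\partial H_i,\partial V_j$, which force the ``rectangles'' to meet only along shared sides and require checking the Markov property at these seams; this is the technical heart of \cite{Devaney80} and, once handled, the theorem follows.
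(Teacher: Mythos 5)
You should first note that the paper contains no proof of this statement to compare against: it is imported wholesale as Theorem~A of \cite{Devaney80}, and the surrounding text only records the consequences that are used later (that $T$ is the identity off $M$ and affine off $\Delta$). Your sketch is therefore a reconstruction of Devaney's argument, and it does identify the correct mechanism --- the derivative at a point $(x,y)\in H_i$ with $T_h(x,y)\in V_j$ is a product of two transverse positive shears, hence a hyperbolic $\mathrm{SL}(2,\mathbb{R})$ matrix with trace $2+\alpha_i\beta_j$, and the first-quadrant cone is preserved by every shear because all $m_i,n_j$ have the same sign. That much is right.

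As a proof, however, the sketch has genuine gaps. The expansion you extract from the cone field is uniform only \emph{per passage through an overlap}: at all other points $DT$ is a single parabolic shear, which maps the closed first-quadrant cone into itself but fixes horizontal (resp.\ vertical) vectors, so there is no per-iterate expansion rate, and the crux of the ``almost Anosov'' result is precisely controlling the unbounded return times to the overlaps. Your ``easy area argument'' does not supply this: Poincar\'e recurrence shows that almost every point of an overlap returns to it, not that the points of an arbitrary small ball in $M$ ever reach one; indeed some orbits never do (a point of $\partial H_i$ at height $a_i$ lying outside every $V_j$ is fixed by $T$), so for such points the sensitivity constant must be produced by \emph{nearby} orbits that do reach the overlaps, via the rotation-by-$\phi_{m_i}(y)$ argument on horizontal circles --- a step you do not carry out. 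For transitivity, you assert the existence of a Markov partition of all of $M$ with irreducible transition matrix and then explicitly defer its construction to \cite{Devaney80}; since handling the discontinuity set $\Delta$, the seams $\partial H_i,\partial V_j$, and the passage from a semiconjugacy on an invariant set to transitivity of $T|_M$ itself \emph{is} the content of the theorem, the argument as written is circular. A self-contained route would instead prove a growth lemma --- iterates of any short unstable curve eventually stretch across every $H_i$ and $V_j$ --- which is the actual engine in the work of Devaney and of Burton--Easton on these maps.
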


\begin{figure}[tbh]
	\includegraphics[width=\textwidth]{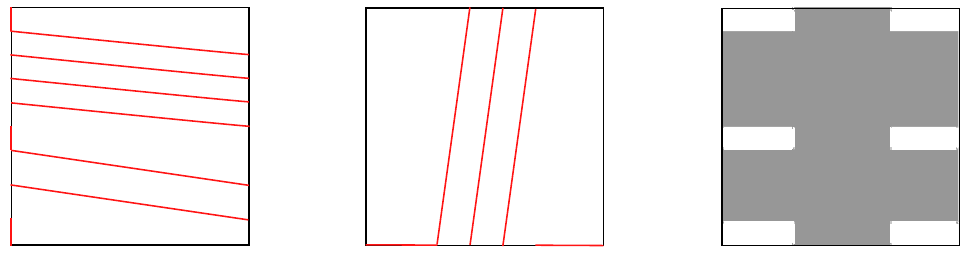}
	\caption{The horizontal  (left, $T_h$)\ and vertical (middle, $T_v$) components  of a linked twist; the red lines are the images of the circles represented by the vertical (left) and horizontal (middle) boundary segments. They involve two and one intervals, respectively. The shaded area on the right represents $M$.}
    \label{fig:twist}
\end{figure}

\subsection{Equicontinuous pseudogroups}

\begin{definition}[{\cite[{Def.~7.1}]{AlvarezCandel}}]\label{d:qlm}
	Let $X$ be a topological space, and let $\{(U_i,d_i)\mid i\in I\}$ be a family of metric spaces such that $\{U_i\}$ is an open covering of $X$ and every $d_i$ is a compatible metric on $U_i$.
	We say that $\{(U_i,d_i)\}$ is a \emph{cover of $X$ by quasi-locally equal metric spaces} if there is an assignment $\epsilon\mapsto \delta(\epsilon)$ such that, for every $i,j\in I$, every point $z\in U_i\cap U_j$ has an open neighborhood $U_{i,j,z}\subset U_i\cap U_j$ satisfying
	\[
	d_i(x,y)<\delta(\epsilon)\quad\Longrightarrow\quad d_j(x,y)<\epsilon
	\]
	for every $\epsilon>0$ and $x,y\in U_{i,j,z}$.
	Two such covers $\{(U_i,d_i)\}$ and $\{(V_j,d'_j)\}$ are \emph{equivalent} if their union is again a cover by quasi-locally equal metric spaces; an equivalence class of covers is called a \emph{quasi-local metric space}.
\end{definition}

\begin{proposition}[{\cite[{Thm.~15.1}]{AlvarezCandel}}] 
	If $X$ is Hausdorff and paracompact, then every cover by quasi-locally equal metric spaces is equivalent to a metric; that is, equivalent to a cover of the form $\{(X,d)\}$.
\end{proposition}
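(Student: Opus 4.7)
My plan is to construct a single metric $d$ on $X$ directly by a chain-type construction and then verify that the resulting pair $\{(X,d)\}$ is equivalent to $\{(U_i,d_i)\}$ in the sense of Definition~\ref{d:qlm}.

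First, using paracompactness, I pass to a locally finite open refinement $\{V_\alpha\}_{\alpha\in A}$ of $\{U_i\}$ with $\overline{V_\alpha}\subset U_{i(\alpha)}$ for each $\alpha$, and I truncate each $d_{i(\alpha)}$ by $1$ so that summing over chains always makes sense. I then define, for $x,y\in X$,
\[
d(x,y)=\inf\sum_{k=0}^{n-1}\min\bigl(d_{i_k}(x_k,x_{k+1}),1\bigr),
\]
where the infimum ranges over finite chains $x=x_0,x_1,\dots,x_n=y$ with $\{x_k,x_{k+1}\}\subset U_{i_k}$ (and $d(x,y)=\infty$ if no chain exists, which is allowed by the conventions of Section~\ref{s:preliminaries}). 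Symmetry, the triangle inequality (by concatenation of chains), and $d(x,x)=0$ are immediate, so $d$ is automatically an extended pseudo-metric.

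The heart of the proof is the local comparison between $d$ and each $d_i$. Fix $z\in U_i$; the one-step chain shows $d(x,y)\le d_i(x,y)\wedge 1$ for $x,y$ near $z$. For the reverse direction, I iterate the quasi-local equality hypothesis: given $\epsilon>0$, the assignment $\epsilon\mapsto\delta(\epsilon)$ from Definition~\ref{d:qlm} provides, after a finite iteration through the finitely many charts meeting a sufficiently small neighborhood of $z$ (this is where local finiteness from paracompactness is used), a radius $\eta>0$ so that any chain of total truncated length $<\eta$ starting at a point close to $z$ is entirely trapped inside the $d_i$-ball of radius $\epsilon$ around $z$. This shows $d(x,y)<\eta\Rightarrow d_i(x,y)<\epsilon$ for $x,y$ in a small neighborhood of $z$. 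Together with the reverse inequality, this gives precisely the quasi-local equality of $(X,d)$ with $(U_i,d_i)$, with an explicit assignment derived from $\delta(\cdot)$.

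Positivity $d(x,y)>0$ for $x\ne y$ follows from the same trapping argument combined with Hausdorffness: pick $U_i\ni x$ and $r>0$ with $y\notin B_{d_i}(x,r)$, then any chain from $x$ to $y$ has total length at least the $\eta$ corresponding to $r$, forcing $d(x,y)\ge \eta>0$. So $d$ is an honest metric, the cover $\{(U_i,d_i)\}\cup\{(X,d)\}$ remains quasi-locally equal, and hence $\{(X,d)\}$ is equivalent to $\{(U_i,d_i)\}$.

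The main obstacle is the trapping step in the comparison argument: a chain may jump repeatedly between different charts, and controlling the accumulated error in terms of a single $d_i$ requires careful bookkeeping of the quasi-local equality along each transition. This is precisely where paracompactness is essential, since only finitely many charts participate in any neighborhood, so the iteration $\delta(\delta(\cdots(\delta(\epsilon))))$ terminates in finitely many steps and yields a uniform $\eta>0$.
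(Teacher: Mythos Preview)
The paper does not give its own proof of this proposition; it is cited from \'Alvarez L\'opez--Candel, so there is no in-paper argument to compare against directly. That said, the paper's proof of Proposition~\ref{p:invariant} carries out a very closely related chain construction, and comparing your outline with that construction exposes a genuine gap in your trapping step.

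The problem is your justification that the iteration $\delta(\delta(\cdots\delta(\epsilon)))$ terminates because only finitely many charts meet a small neighborhood of $z$. This confuses the number of \emph{charts} with the number of \emph{links} in the chain. A chain $x=x_0,\dots,x_n=y$ may have arbitrarily large $n$ even if every link lies in one of finitely many $U_{j_1},\dots,U_{j_m}$. With your definition you effectively take the \emph{infimum} over the admissible chart at each link, so to bound $d_i(x_0,x_n)$ you must convert each $d_{i_k}(x_k,x_{k+1})$ into $d_i(x_k,x_{k+1})$ via the quasi-local comparison. That conversion turns a link of size $<\eta$ into one of size $<\epsilon'$ with $\delta(\epsilon')=\eta$, and after summing you get $d_i(x_0,x_n)<n\epsilon'$, which is useless because $n$ is unbounded. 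No finite iteration of $\delta$ fixes this.

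The construction in Proposition~\ref{p:invariant} avoids this by two devices you should import. First, it sums the \emph{supremum} $\overline{D}(x_k,x_{k+1})=\sup_j D_j(x_k,x_{k+1})$ over applicable charts, not the infimum; then, as long as the chain stays inside a single $\im\hat\phi$, one has $D_{\hat\phi}(x_k,x_{k+1})\le\overline{D}(x_k,x_{k+1})$ and the triangle inequality in $D_{\hat\phi}$ gives the bound with no loss and no dependence on $n$. Second, it restricts to \emph{admissible pairs} tied to a shrinking $V_{\hat\phi}\subset\im\hat\phi$ (your $V_\alpha$'s, which you introduce but never actually use in the chain definition): this guarantees that a chain cannot leave $V_{\hat\phi}$ without first paying the fixed $D_{\hat\phi}$-distance from the starting point to $\im\hat\phi\setminus V_{\hat\phi}$. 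That is exactly Claim~\ref{c:vimphi}, and it is what replaces your trapping step. Once you make these two changes, positivity and compatibility follow as you outlined.
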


\begin{definition}[{\cite[{Def.~8.4}]{AlvarezCandel}}]\label{d:eqpsg}
	Let $\mathcal{G}$ be a pseudogroup acting on a Polish space $X$. We say that $\mathcal{G}$ is \emph{equicontinuous} if there is a cover by quasi-locally equal metric spaces $\{(U_i,d_i)\mid i\in I\}$, a generating pseudo{\textasteriskcentered}group $\mathcal{S}$, and an assignment $\epsilon\mapsto \delta(\epsilon)$ such that
	\[
	d_i(x,y)<\delta(\epsilon)\quad\Longrightarrow\quad d_j(sx,sy)<\epsilon
	\]
	for every $i,j\in I$, $s\in \mathcal{S}$, $x,y\in\dom s\cap U_i$ and $sx,sy\in U_j$.
\end{definition}

Note that if the above condition is fulfilled with a cover by quasi-locally equal metric spaces, then it is also fulfilled with any other equivalent cover, so we can regard equicontinuity as a property of the quasi-local metric.

 By the previous results, Definition~\ref{d:eqpsg} is equivalent to the following:

\begin{definition}
	A pseudogroup $\mathcal{G}\acts X$ is \emph{equicontinuous} if there is a generating pseudo{\textasteriskcentered}group $S$, a compatible metric $d$, and an assignment $\epsilon\mapsto\delta(\epsilon)$ such that
	\[
	d(x,y)<\delta(\epsilon)\qquad \Longrightarrow \qquad d(sx,sy)<\epsilon
	\]
	for every $s\in S$ and $x,y\in \dom s$.
\end{definition}

\begin{proposition}[{\cite[{Lem.~8.8}]{AlvarezCandel}}] \label{p:equicontinuityinvariant}
	Being equicontinuous is invariant by equivalences of pseudogroups.
\end{proposition}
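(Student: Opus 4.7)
The plan is to pull back the equicontinuous data on $(X,\mathcal{G})$ to $(Y,\mathcal{H})$ through the equivalence $\Phi\colon(X,\mathcal{G})\to(Y,\mathcal{H})$; the converse is symmetric by Lemma~\ref{l:equivproperties}. Suppose $\mathcal{G}$ is equicontinuous, witnessed by a cover $\{(U_i,d_i)\mid i\in I\}$ of $X$ by quasi-locally equal metric spaces, a generating pseudo{\textasteriskcentered}group $\mathcal{S}\subset\mathcal{G}$, and a modulus $\delta$.

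First, I would pull back the cover: for $\phi\in\Phi$ and $i\in I$, set $V_{\phi,i}:=\phi(\dom\phi\cap U_i)$ and $d_{\phi,i}(y_1,y_2):=d_i(\phi^{-1}y_1,\phi^{-1}y_2)$. The family $\{(V_{\phi,i},d_{\phi,i})\}$ covers $Y$ by Definition~\ref{d:equiv}(\ref{i:equivcover}). To verify the quasi-locally equal property at $y\in V_{\phi,i}\cap V_{\psi,j}$, observe that $\psi^{-1}\phi\in\mathcal{G}$ by~\ref{d:equiv}(\ref{i:equivgamma}); by Lemma~\ref{l:generate} it agrees with some element of $\mathcal{S}$ on a suitable neighborhood $U\subset U_i\cap(\psi^{-1}\phi)^{-1}(U_j)$ of $\phi^{-1}y$, and the equicontinuity of $\mathcal{S}$ transfers to $\phi(U)$ with the same modulus $\delta$.

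Next, I would take $\mathcal{S}_{\mathcal{H}}$ to be the set of partial homeomorphisms of $Y$ which, near every point of their domain, coincide with some expression $\phi s\psi^{-1}$ with $\phi,\psi\in\Phi$ and $s\in\mathcal{S}$. Each such expression lies in $\mathcal{H}$ by Definition~\ref{d:equiv}(\ref{i:equivcomp})--(\ref{i:equivgamma}), so $\mathcal{S}_{\mathcal{H}}\subset\mathcal{H}$. The family $\mathcal{S}_{\mathcal{H}}$ is clearly closed under inversion and restriction, and the key point for closure under composition is that
\[
(\phi_1 s_1\psi_1^{-1})(\phi_2 s_2\psi_2^{-1})=\phi_1 s_1(\psi_1^{-1}\phi_2)s_2\psi_2^{-1},
\]
with $\psi_1^{-1}\phi_2\in\mathcal{G}$ locally in $\mathcal{S}$, so the composition is locally of the form $\phi_1 s'\psi_2^{-1}$ for some $s'\in\mathcal{S}$. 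Moreover, $\mathcal{S}_{\mathcal{H}}$ generates $\mathcal{H}$: for $h\in\mathcal{H}$ and $y\in\dom h$, pick $\psi\in\Phi$ with $y\in\im\psi$ and $\phi\in\Phi$ with $h(y)\in\im\phi$; then $\phi^{-1}h\psi\in\mathcal{G}$ is locally in $\mathcal{S}$ by Lemma~\ref{l:generate}, and $h$ locally equals $\phi(\phi^{-1}h\psi)\psi^{-1}$.

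Finally, equicontinuity of $\mathcal{S}_{\mathcal{H}}$ with the same modulus $\delta$ follows by translating the local expressions to the $X$ side: for $t\in\mathcal{S}_{\mathcal{H}}$ locally equal to $\phi s\psi^{-1}$ and points $y_1,y_2$ in the pulled-back charts $V_{\phi',i}$ with $ty_k\in V_{\phi'',j}$, the map $g:=(\phi'')^{-1}t\phi'\in\mathcal{G}$ is locally an element of $\mathcal{S}$, so the equicontinuity of $\mathcal{S}$ immediately yields $d_{\phi'',j}(ty_1,ty_2)<\delta(\epsilon)$ whenever $d_{\phi',i}(y_1,y_2)<\epsilon$. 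I expect the main obstacle to be reconciling the global modulus $\delta$ with the fact that membership in $\mathcal{S}_{\mathcal{H}}$ is only defined locally; this is absorbed by the quasi-locally equal property of the pulled-back cover, since for pairs of points for which the local expressions $\phi s\psi^{-1}$ differ, the hypothesis $d_{\phi',i}(y_1,y_2)<\epsilon$ becomes vacuous once $\epsilon$ is small enough.
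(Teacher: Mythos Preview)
The paper does not supply its own proof of this proposition; it is quoted verbatim from \cite[Lem.~8.8]{AlvarezCandel}. So there is no ``paper's proof'' to compare against, and what follows is an assessment of your argument on its own merits.

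Your overall strategy---pull back the quasi-local metric via $\Phi$ and push $\mathcal{S}$ forward to a pseudo{\textasteriskcentered}group on $Y$---is the natural one and is essentially what \cite{AlvarezCandel} does. There is, however, a genuine gap in your construction of $\mathcal{S}_{\mathcal{H}}$.

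You define $\mathcal{S}_{\mathcal{H}}$ as the set of maps that \emph{locally} coincide with some $\phi s\psi^{-1}$. But your own argument for ``$\mathcal{S}_{\mathcal{H}}$ generates $\mathcal{H}$'' shows that every $h\in\mathcal{H}$ is locally of this form, so in fact $\mathcal{S}_{\mathcal{H}}=\mathcal{H}$. Once $\mathcal{S}_{\mathcal{H}}$ is the full pseudogroup, the combination axiom destroys equicontinuity: already for the pseudogroup on $\mathbb{R}$ generated by $x\mapsto x+1$ (equicontinuous with $\mathcal{S}$ the restrictions of translations), the map $g$ equal to $\id$ on $(0,1)$ and to $x\mapsto x+1$ on $(1,2)$ lies in $\mathcal{G}$ but sends points at distance $2\epsilon/3$ to points at distance $>1$. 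Your closing sentence (``the hypothesis becomes vacuous once $\epsilon$ is small enough'') does not rescue this, because ``small enough'' depends on the particular $t$ and on where the local expressions change; the modulus $\delta(\epsilon)$ in Definition~\ref{d:eqpsg} must be uniform over all $t\in\mathcal{S}_{\mathcal{H}}$ and all pairs of points.

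The fix is to drop the word ``locally'' and take
\[
\mathcal{S}_{\mathcal{H}}=\{\phi s\psi^{-1}\mid \phi,\psi\in\Phi,\ s\in\mathcal{S}\}\cup\{\id_Y\}.
\]
This \emph{is} already a pseudo{\textasteriskcentered}group: closure under composition follows because, by Definition~\ref{d:equiv}(\ref{i:equivcomp})--(\ref{i:equivgamma}), $\phi_1 s_1(\psi_1^{-1}\phi_2)\in\Phi$, so $(\phi_1 s_1\psi_1^{-1})(\phi_2 s_2\psi_2^{-1})=\phi' s_2\psi_2^{-1}$ with $\phi'\in\Phi$. With this $\mathcal{S}_{\mathcal{H}}$, each $t$ carries a \emph{global} factorisation $\chi s\xi^{-1}$, and the equicontinuity estimate transfers directly using the charts $V_{\xi,i}$ and $V_{\chi,j}$; the quasi-locally equal property of the pulled-back cover then handles the remaining charts. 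Your first two paragraphs go through unchanged with this corrected $\mathcal{S}_{\mathcal{H}}$.
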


\section{Pseudogroup dynamics}
\subsection{Sensitivity and chaos}\label{ss:sensitivityandchaos}
The aim of this section is to introduce our definition of  Devaney chaos for pseudogroups; first, we need to obtain  suitable analogues of conditions~(\ref{i:devtt})--(\ref{i:devsens}) in Definition~\ref{d:dev}.
\begin{definition}
	A pseudogroup $\mathcal{G}\curvearrowright X$ is \emph{topologically transitive} if, for all non-empty open subsets $U,V\subset X$, there is some $g\in \mathcal{G}$ with
	\[
	g(U)\cap V\neq \emptyset.
	\]
\end{definition}

\begin{definition}
	A pseudogroup $\mathcal{G}\curvearrowright X$ is \emph{point transitive}  if there is some $x\in X$ such that $\mathcal{G}x$ is dense in $X$.
\end{definition}

\begin{lemma}[{cf.~\cite[Prop.~1.1]{Silverman} and~\cite[Prop.~4.5]{Flores}}]\label{l:pt}
	Point transitivity implies topological transitivity for every pseudogroup $\mathcal{G}\acts X$; if $X$ is separable and Baire, then the converse also holds.
\end{lemma}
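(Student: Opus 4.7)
The proof should mimic the classical equivalence of point transitivity and topological transitivity for group actions (as in Silverman's cited paper), with the main adaptation being bookkeeping of domains of partial maps. I would split the proof into the two implications.

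For the forward direction, suppose $x\in X$ has dense orbit $\mathcal{G}x$, and let $U,V\subset X$ be non-empty and open. By density, there exist $g_1,g_2\in\mathcal{G}$ with $x\in\dom g_1\cap\dom g_2$, $g_1(x)\in U$, and $g_2(x)\in V$. Then $g:=g_2 g_1^{-1}\in\mathcal{G}$ satisfies $g_1(x)\in U\cap\dom g$ and $g(g_1(x))=g_2(x)\in V$, so $g(U)\cap V\neq\emptyset$. This direction does not use any hypothesis on $X$.

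For the converse, I would run the standard Baire category argument. Since $X$ is separable and metrizable (being Polish under the paper's standing assumption), it is second countable; fix a countable basis $\{V_n\}_{n\in\mathbb{N}}$ of non-empty open sets. For each $n$, set
\[
W_n\;=\;\bigcup_{g\in\mathcal{G}} g^{-1}(V_n),
\]
which is open (each $g^{-1}(V_n)\subset\dom g$ is open since $g$ is a partial homeomorphism). Topological transitivity gives that for any non-empty open $U$ there is $g\in\mathcal{G}$ with $g(U)\cap V_n\neq\emptyset$, producing some $y\in U\cap\dom g$ with $g(y)\in V_n$, hence $y\in U\cap W_n$. Thus each $W_n$ is dense open. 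Because $X$ is Baire, $\bigcap_n W_n$ is non-empty; any $x$ in this intersection has $\mathcal{G}x\cap V_n\neq\emptyset$ for every $n$, so $\mathcal{G}x$ is dense.

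I do not expect any serious obstacle here; the only subtlety is ensuring that one never tries to evaluate a partial map outside its domain when combining two orbit witnesses in the forward direction, and that the sets $g^{-1}(V_n)$ are recognized as open subsets of $\dom g\subset X$ (not of $V_n$) in the Baire argument. Both are immediate once written with care, so this lemma is essentially a bookkeeping exercise transposing Silverman's classical argument to the pseudogroup setting.
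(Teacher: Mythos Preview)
Your proposal is correct and essentially identical to the paper's proof. The only cosmetic difference is that the paper phrases the Baire step contrapositively (assuming no dense orbit, it writes $X$ as a countable union of nowhere dense complements $X\setminus\mathcal{V}_n$ with $\mathcal{V}_n=\bigcup_{g\in\mathcal{G}}g(U_n)$), whereas you argue directly that $\bigcap_n W_n$ is non-empty; since $\mathcal{G}$ is closed under inversion, your $W_n$ and the paper's $\mathcal{V}_n$ coincide.
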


\begin{proof}
	To show that point transitivity implies topological transitivity, let $\mathcal{G}x$ be a dense orbit and let $U,V$ be non-empty open sets. Then there are $g,h\in\mathcal{G}$ with $g(x)\in U$, $h(x)\in V$, and therefore $hg^{-1}(U)\cap V\neq\emptyset$.
	
	Suppose now that $\mathcal{G}$ is topologically transitive but there is no dense orbit, and let $\{U_n\}_{n\in\mathbb{N}}$ be a countable base for $X$. For every $x\in X$, there is some $U_{n(x)}$ such that $\mathcal{G}x\cap U_{n(x)}=\emptyset$. For each $n$, $\mathcal{V}_{n}=\bigcup_{g\in\mathcal{G}} g(U_{n})$	is a dense open set because $\mathcal{G}$ is topologically transitive, so $X\setminus \mathcal{V}_{n(x)}$ is a closed and nowhere dense set containing $x$. Thus, $X=\bigcup_{n\in\mathbb{N}}X\setminus \mathcal{V}_{n}$ is a countable union of closed and nowhere dense sets, contradicting the assumption that $X$ was Baire.
\end{proof}

Recall that we are working with Polish (hence, Baire) spaces, so topological transitivity and point transitivity coincide.

Regarding density of periodic points, one may be tempted to use the following naive definition: $\mathcal{G}\acts X$ has density of periodic points if the union of finite $\mathcal{G}$-orbits is dense in $X$. Unfortunately, Example~\ref{e:rz} shows that this condition is not invariant by equivalences, so we need to reformulate it as follows:

\begin{definition}\label{d:dpo}
	A pseudogroup $\mathcal{G}\acts X$ has \emph{density of periodic orbits} if there is an open set $U\subset X$ meeting every $\mathcal{G}$-orbit and such that the set of finite $\mathcal{G}|_U$-orbits is dense in $U$.
\end{definition}

\begin{lemma}\label{l:dppinvariant}
	Having density of periodic orbits is invariant by equivalences.
\end{lemma}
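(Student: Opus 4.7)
The plan is to carry density of periodic orbits across an equivalence $\Phi\colon(X,\mathcal{G})\to(Y,\mathcal{H})$ by constructing a witness $V\subset Y$ as a union of images of a carefully chosen locally finite subfamily of $\Phi$. A useful preliminary observation, which I would verify using Axiom~\ref{d:pseudogroup}(\ref{i:restrictions}), is that $\mathcal{G}|_U\cdot x=\mathcal{G}\cdot x\cap U$ whenever $x\in U$: any $g\in\mathcal{G}$ with $x\in\dom g$ and $g(x)\in U$ can be restricted to $\dom g\cap U\cap g^{-1}(U)$ to yield a partial homeomorphism in $\mathcal{G}|_U$ with the same value at $x$.

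Assuming $\mathcal{G}$ has density of periodic orbits witnessed by an open $U\subset X$, I would use local compactness, second countability, and paracompactness to extract a countable locally finite family $\{W_i\}_{i\in I}$ of relatively compact open subsets covering $U$, each satisfying $\overline{W_i}\subset\dom\phi_i$ for some $\phi_i\in\Phi$; then set $V:=\bigcup_{i\in I}\phi_i(W_i)\subset Y$. To verify that $V$ meets every $\mathcal{H}$-orbit, I would take $y\in Y$ and any $\phi\in\Phi$ with $y\in\im\phi$; since $U$ meets the $\mathcal{G}$-orbit of $\phi^{-1}(y)$, some $g\in\mathcal{G}$ lands in a $W_i$, and then $\phi_i g\phi^{-1}\in\mathcal{H}$ (by Axioms~\ref{d:equiv}(\ref{i:equivcomp})--(\ref{i:equivgamma})) maps $y$ into $V$.

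For density of finite $\mathcal{H}|_V$-orbits in $V$, take $y_0\in V$ and an open neighbourhood $N\subset V$; pick $i_0\in I$ with $y_0\in\im\phi_{i_0}$ and, by density of finite $\mathcal{G}|_U$-orbits, find $z\in\phi_{i_0}^{-1}(N)\cap W_{i_0}$ with $\mathcal{G}|_U\cdot z$ finite, so that $y':=\phi_{i_0}(z)\in N$. For every $y''\in\mathcal{H}\cdot y'\cap V$, choosing $i$ with $y''\in\im\phi_i$, setting $x'':=\phi_i^{-1}(y'')\in W_i\subset U$, and picking any $h\in\mathcal{H}$ with $h(y')=y''$, the composition $\phi_i^{-1}h\phi_{i_0}$ lies in $\mathcal{G}$ by Axiom~\ref{d:equiv}(\ref{i:equivgamma}), sends $z$ to $x''$, and has domain and image contained in $U$, so $x''\in\mathcal{G}|_U\cdot z$.

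The main obstacle is bounding $|\mathcal{H}\cdot y'\cap V|$: a single $x''\in\mathcal{G}|_U\cdot z$ could a priori yield infinitely many distinct images $\phi_i(x'')$ as $i$ varies. This is precisely where the local finiteness of $\{W_i\}$ is essential, since only finitely many $i$ can have $x''\in W_i$; combined with finiteness of $\mathcal{G}|_U\cdot z$, the surjective assignment $(x'',i)\mapsto\phi_i(x'')$ from a finite set of pairs onto $\mathcal{H}\cdot y'\cap V$ forces the latter to be finite. Since $\mathcal{H}|_V\cdot y'\subset\mathcal{H}\cdot y'\cap V$ by the restriction axiom applied to any $h\in\mathcal{H}|_V$ with $y'\in\dom h$, this completes the argument, and the reverse implication follows by symmetry using the equivalence $\Phi^{-1}$.
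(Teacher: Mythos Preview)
Your proof is correct and follows essentially the same approach as the paper's: both pick a locally finite subfamily of $\Phi$ whose domains cover $U$, set $V$ to be the union of the corresponding images, and then use that a finite $\mathcal{G}|_U$-orbit meets only finitely many of these domains to bound the $\mathcal{H}|_V$-orbit. The only cosmetic differences are that you shrink to relatively compact $W_i\subset\dom\phi_i$ (the paper just uses $\dom\phi$ directly) and you invoke local compactness, which is not actually needed here since paracompactness alone yields the locally finite refinement; also, your phrase ``has domain and image contained in $U$'' is not literally true for $\phi_i^{-1}h\phi_{i_0}$, but your own preliminary observation about $\mathcal{G}|_U\cdot x=\mathcal{G}\cdot x\cap U$ already gives the conclusion $x''\in\mathcal{G}|_U\cdot z$ without that claim.
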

\begin{proof}
	Let $\mathcal{G}\acts X$ be a pseudogroup with density of periodic orbits, let $U\subset X$ be an open set meeting every $\mathcal{G}$-orbit and such that the finite $\mathcal{G}|_U$-orbits are dense, and let $\Phi\colon (X,\mathcal{G}) \to (Y,\mathcal{H})$ be an equivalence of pseudogroups.
	
	Since $U$ is a paracompact space, we can find a subset $\Phi_0\subset \Phi$ such that $\{\dom \phi\mid \phi\in\Phi_0\}$ is a locally finite family and $\bigcup_{\phi\in\Phi_0} \dom \phi=U$. We claim that 
	\[
	V=\bigcup_{\phi\in\Phi_0} \im \phi
	\]
	satisfies the statement in Definition~\ref{d:dpo}. Clearly, $V$ meets every $\mathcal{H}$-orbit, so let us prove that finite $\mathcal{H}|_V$-orbits are dense in $V$.
	
	Let $W\subset V$ be any open set and choose $\phi_0\in\Phi_0$ with $\im \phi_0\cap W\neq\emptyset$.  $\phi_0^{-1}(W)\subset U$,  so there is some $y\in \phi^{-1}_0(W)$ such that $\mathcal{G}|_U(y)$ is finite.
	Since $\{\dom \phi\mid \phi\in\Phi_0\}$ is a locally finite family and $\mathcal{G}|_U(y)$ is finite, there are only finitely many maps in $\Phi_0$ defined on $\mathcal{G}|_U(y)$, so 
	\[
	A_y:=\{\phi(z)\mid \phi\in\Phi_0, z\in \mathcal{G}|_U(y)\}
	\]
	is a finite set. 
	
	Let us show that $\mathcal{H}|_V(A_y)= A_y$. For every $h\phi(z)$, $h\in \mathcal{H}|_{V}$, there is some $\psi\in \Phi_0$ with $h\phi(z)\in \im \psi$, and therefore 
	\[
	z':=\psi^{-1} h\phi(z)\in \mathcal{G}|_U(z)=\mathcal{G}|_U(y)
	\] 
	by~\ref{d:equiv}(\ref{i:equivgamma}). Hence $h\phi(z)= \psi(z')$ with $z'\in \mathcal{G}|_U(y)$, showing that $\mathcal{H}|_V(A_y)= A_y$. 
	
	We have proved that every open set $W\subset V$ meets an $\mathcal{H}|_V$-invariant finite set $A_y$, so $\mathcal{H}$ has density of periodic orbits.
\end{proof}

\begin{corollary}\label{c:dpocg}
	If $\mathcal{G}\acts X$ has density of periodic orbits and $W\subset X$ is a relatively compact open set meeting every orbit, then the finite  $\mathcal{G}|_W$-orbits are dense in $W$.
\end{corollary}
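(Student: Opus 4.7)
My plan is to combine the equivalence $(W, \mathcal{G}|_W)\sim(X, \mathcal{G})$ from Lemma~\ref{l:equivid} with Lemma~\ref{l:dppinvariant}, refined using the relative compactness of $W$. A direct application of those two lemmas only yields density of finite orbits in \emph{some} open subset $V\subset W$ meeting every $\mathcal{G}|_W$-orbit; the corollary asks for density throughout $W$, so I need to be more careful.

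I would proceed directly. Let $U$ be an open set meeting every $\mathcal{G}$-orbit with finite $\mathcal{G}|_U$-orbits dense, as provided by hypothesis, and consider
\[
\Psi = \{\,\psi\in\mathcal{G}\mid \dom\psi\subset U,\ \im\psi\subset W\,\}.
\]
Because both $U$ and $W$ meet every $\mathcal{G}$-orbit and $W$ is open, the images of maps in $\Psi$ form an open cover of $W$. Relative compactness of $W$ then lets me extract a \emph{finite} subfamily $\Psi_0\subset\Psi$ whose images still cover $W$ (after possibly enlarging $W$ slightly and restricting the chosen maps back). This finiteness is precisely what converts density in $U$ into density throughout $W$.

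Given $w\in W$ and an open neighborhood $N\subset W$, pick $\psi_0\in\Psi_0$ with $w\in\im\psi_0$, and by density of finite orbits in $U$ choose $u\in\psi_0^{-1}(N)\cap U$ with finite $\mathcal{G}|_U$-orbit $O_u$. Set
\[
A=\bigcup_{\sigma\in\Psi_0}\sigma(O_u)\subset W,
\]
which is finite and contains $\psi_0(u)\in N$. The argument concludes once I verify that $A$ is $\mathcal{G}|_W$-invariant, so that the $\mathcal{G}|_W$-orbit of $\psi_0(u)$ is contained in $A$ and hence finite.

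The main obstacle is exactly this invariance check. Given $h\in\mathcal{G}|_W$ and $a=\sigma(z)\in A\cap\dom h$ with $\sigma\in\Psi_0$ and $z\in O_u$, I pick $\sigma'\in\Psi_0$ with $h(a)\in\im\sigma'$ and form $\tau=\sigma'^{-1}h\sigma$. The conditions defining $\Psi$ force $\dom\tau\subset\dom\sigma\subset U$ and $\im\tau\subset\dom\sigma'\subset U$, so $\tau\in\mathcal{G}|_U$; hence $\tau(z)\in\mathcal{G}|_U(z)=O_u$ and $h(a)=\sigma'(\tau(z))\in A$. Relative compactness of $W$, which produces the finite family $\Psi_0$, is essential here: without it, $A$ would generically be infinite and the argument would only recover density on a proper open subset of $W$, matching the weaker conclusion already built into Definition~\ref{d:dpo}.
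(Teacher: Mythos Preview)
Your proof is correct and follows essentially the same route as the paper: use relative compactness of $W$ to produce a \emph{finite} family of maps in $\mathcal{G}$ with domains in $U$ and images covering $W$, then rerun the invariance argument from Lemma~\ref{l:dppinvariant}. The paper makes your parenthetical ``enlarging $W$ slightly and restricting back'' explicit by first choosing a finite $F\subset\mathcal{G}$ with $\overline{W}\subset\bigcup_{f\in F}\im f$ and $\bigcup_{f\in F}\dom f\subset U$, then setting $\Phi_0=\{f|_{f^{-1}(W)}\mid f\in F\}$; otherwise the arguments coincide.
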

\begin{proof}
Let $U\subset X$ be an open set satisfying the statement of Definition~\ref{d:dpo}, and let $F\subset \mathcal{G}$ be a finite set satisfying $\overline{W}\subset \{\im f\mid f\in F\}$ and $\bigcup_{f\in F}\dom f\subset U$. Since $W$ meets every $\mathcal{G}$-orbit, $V:=\bigcup_{f\in F}f^{-1}(W)$ also meets every $\mathcal{G}$-orbit; moreover, $V\subset U$, so the set of finite $\mathcal{G}|_V$-orbits is dense in $V$. Hence $\Phi_0=\{f|_{f^{-1}(W)} : f\in F\}$ is a finite set  satisfying $W=\bigcup_{\phi\in\Phi_0}\im \phi$, and, arguing as in the proof of the previous proposition, we get that the set of finite $\mathcal{G}|_W$-orbits is dense in $W$.
\end{proof}

Finally, we come to the definition of sensitivity for pseudogroups. A naive approach would suggest the following definition: 
\begin{definition}[Naive sensitivity]\label{d:naive}
	$\mathcal{G}\acts X$ is sensitive if there is $c>0$ such that, for every $x\in X$ and $r>0$, there are $g\in \mathcal{G}$ and $y\in B(x,r)$ with 
	$x, y\in\dom g$ and
	 $d(g(x),g(y))\geq c$.  
\end{definition}

However, the following lemma shows that this condition is too weak to model our idea of sensitivity:

\begin{lemma}\label{l:naivefail}
	Any topologically transitive pseudogroup $\mathcal{G}$ on a perfect  Polish space $X$ satisfies Definition~\ref{d:naive}.
\end{lemma}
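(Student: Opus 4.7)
\smallskip
\noindent\textbf{Proof plan.} The naive definition only requires \emph{some} map $g\in\mathcal G$ with $x\in\dom g$ that separates $x$ from a nearby point $y$. The strategy is to exploit two features that are not available in the group-action setting but that a pseudogroup provides for free: closure under restriction (so that $\id_X\in\mathcal G$ yields $\id_U\in\mathcal G$ for every open $U$) and closure under combinations of maps with disjoint domains. Together, these allow us to paste the identity near $x$ to a transitive displacement near another point, producing a single map $g\in\mathcal G$ that fixes $x$ while sending arbitrarily close points far away.

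First, I would fix two points $x_1,x_2\in X$ with $d(x_1,x_2)>0$ (which exist since $X$, being perfect with at least one point, has infinitely many points) and set $c:=d(x_1,x_2)/3$ and $U_i:=B(x_i,c/2)$. By the triangle inequality, for any $x\in X$ at least one index $i\in\{1,2\}$ satisfies $d(x,x_i)\geq 3c/2$, so that $d(x,q)\geq c$ for every $q\in U_i$; call such a $U_i$ a \emph{far target} for $x$.

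Next, given $x\in X$ and $r>0$, I would use perfectness to choose a point $y_0\in B(x,r)\setminus\{x\}$ and then pick $s>0$ small enough that $B(x,s)\cap B(y_0,s)=\emptyset$ and $B(y_0,s)\subset B(x,r)$. Topological transitivity, applied to $B(y_0,s)$ and a far target $U$ for $x$, produces $g_*\in\mathcal G$ with $g_*(B(y_0,s))\cap U\ne\emptyset$; continuity of $g_*$ yields a nonempty open $W_1\subset B(y_0,s)\cap\dom g_*$ with $g_*(W_1)\subset U$. Setting $W_0:=B(x,s/2)$, the sets $W_0$ and $W_1$ are disjoint, so
\[
g:=\id_{W_0}\,\cup\, g_*|_{W_1}
\]
belongs to $\mathcal G$ by Axioms~\ref{d:pseudogroup}(\ref{i:restrictions}) and~\ref{d:pseudogroup}(\ref{i:combination}). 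Then $x\in\dom g$ with $g(x)=x$, and any $y\in W_1\subset B(x,r)$ satisfies $g(y)\in U$, hence $d(g(x),g(y))\geq c$.

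The only nontrivial input is the choice of a uniform $c$ that works for \emph{all} $x$ simultaneously; this is exactly what the far-target argument above provides. I expect no real obstacle, and the lemma illustrates precisely why Axiom~\ref{d:pseudogroup}(\ref{i:combination}) forces us to reject Definition~\ref{d:naive} as a reasonable notion of sensitivity for pseudogroups.
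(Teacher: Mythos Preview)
Your proof is correct and follows essentially the same strategy as the paper's: glue the identity on a small ball around $x$ to a transitivity-produced displacement on a nearby disjoint open set via Axiom~\ref{d:pseudogroup}(\ref{i:combination}), thereby forcing $g(x)=x$ while $g(y)$ lands in a fixed far-away region. The only cosmetic difference is that the paper applies transitivity twice (to two fixed targets $W_1,W_2$ with $d(W_1,W_2)\ge 2c$) and then invokes the triangle inequality to select the one far from $g(x)$, whereas you pre-select the far target depending on $x$ and apply transitivity once; one minor omission is that you should also require $s<2c$ so that $W_0\cap g_*(W_1)=\emptyset$ and the combination is injective, but this is immediate from ``$s$ small enough.''
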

\begin{proof}
Since $X$ is perfect, we can choose two non-empty open sets $W_1,W_2$ and $c>0$ satisfying 
\begin{equation*}
d(W_1,W_2)\geq 2c.
\end{equation*} 
Let $x\in X$ and $r>0$ be arbitrary, and choose  $0<s<r$ such that  $V=B(x,r)\setminus \overline{B(x,s)}$ is non-empty; such  $s$ exists because $X$ is perfect. By topological transitivity, there are maps $g_1$ and $g_2$  in $\mathcal{G}$ such that 
\[
g_1(V)\cap W_1\neq \emptyset,\qquad g_2(V)\cap W_2\neq \emptyset;
\]
by restricting to open subsets if necessary, we may assume $\dom g_1,\dom g_2\subset V$ and $\im g_i\subset W_i$ for $i=1,2$,. Let $h_i$, $i=1,2$, be the partial map with domain 
\[
\dom h_i = \dom g_i\cup B(x,s)
\]
defined by
\[
h_i|_{\dom g_i}=g_i,\qquad h_i|_{B(x,s)}=\id_{B(x,s)}.
\]
Axiom~\ref{d:pseudogroup}(\ref{i:combination}) ensures that $h_i$ belongs to $\mathcal{G}$.
By construction, we have that $x,y_1\in\dom h_1$, $x,y_2\in\dom h_2$ and $d(h_1(y_1),h_2(y_2))\geq 2c.$
The triangle inequality now yields that, for some $i\in \{1,2\}$, we have
\[
d(h_i(x),h_i(y_i))= d(x,h_i(y_i))\geq c.\qedhere
\]
\end{proof}

This argument shows that the problem originates from Axiom~\ref{d:pseudogroup}(\ref{i:combination}) (closure under combinations). Following the ideas in~\cite{AlvarezCandel}, which in turn can be traced back to previous works (see~\cite{HectorHirsch,Matsumoto}), we phrase our definition of sensitivity for pseudogroups in terms of generating pseudo{\textasteriskcentered}groups. We also quantify over all compatible metrics in order to make it a topological condition.

\begin{definition}\label{d:sensitivity}
Given a metric $d$ on $X$ and a generating pseudo{\textasteriskcentered}group $\mathcal{S}$ for a pseudogroup $\mathcal{G}\curvearrowright X$,
we say that $\mathcal{G}\curvearrowright X$ is ($\mathcal{S}$,$d$)-\emph{sensitive to initial conditions} if there is a \emph{sensitivity constant} $c:=c(\mathcal{S},d)>0$ such that, for every $x\in X$ and $r>0$, there are  $s\in \mathcal{S}$ and $y\in \dom s$ with
\[
d(x,y)<r\quad \text{and} \quad d(sx,sy)\geq c.
\]
We say that $\mathcal{G}\curvearrowright X$ is \emph{sensitive to initial conditions} if it is  ($\mathcal{S}$,$d$)-sensitive to initial conditions for every choice of  $\mathcal{S}$ and $d$.
\end{definition}

Note that $c$ clearly varies with the choice of metric and pseudo{\textasteriskcentered}group: if we fix $d$ and choose a sequence of pseudo{\textasteriskcentered}groups $S_n$ such that  all sets $\im s_n$ ($s\in S_n$) have diameter less than $1/n$, then $c(d,S_n)\downarrow 0$.

In order to provide some intuition for this definition, let's consider the case of a $\mathbb{Z}$-action induced by a homeomorphism $f$ on some topological space $X$. If the $\mathbb{Z}$-action is sensitive to initial conditions (w.r.t.~the classical definition for group actions), there is $c>0$ so that, for every $x\in X$ and $\delta>0$, there are $y\in B(x,r)$ and $z\in \mathbb{Z}$ satisfying $d(f^z(x),f^z(y))>c$. However, the absolute value of $z$ will diverge as $\delta\to 0$. In the case of the pseudogroup induced by this action, it might happen that, for some generating pseudo{\textasteriskcentered}group $\mathcal{S}$, the domain of every map $s\in \mathcal{S}$ that agrees  with $f^z$ on some open set is so small that it does not contain any such $y$, and therefore the maps in $\mathcal{S}$ cannot bear witness to the sensitivity of the action. This is precisely why we must quantify over all generating pseudo{\textasteriskcentered}groups. Coming back to the example at hand, given a covering $\{U_i\}$ of $X$ by open sets, the restrictions $\{f|_{U_i}\}$ are a generating pseudo{\textasteriskcentered}group for the pseudogroup induced by the action. Note that, by composing maps, the diameters of the domains of the compositions might grow smaller and smaller. In this case, sensitivity means that, for every covering $\{U_i\}$, there is some positive $c$ (which depends on the cover) so that, for every point $x\in X$, we can find points $y$ arbitrarily close to $x$ that have compositions $f|_{U_{i_n}}\circ \cdots\circ f|_{U_{i_1}}$  defined on $x$ and $y$ and satisfying
\[
d(f|_{U_{i_n}}\circ \cdots\circ f|_{U_{i_1}}(x), f|_{U_{i_n}}\circ \cdots\circ f|_{U_{i_1}}(y))>c.
\]

Having obtained analogues of \ref{d:dev}(\ref{i:devtt})--(\ref{i:devsens}), we can introduce our definition of Devaney chaos for pseudogroups.

\begin{definition}
	A pseudogroup $\mathcal{G}\acts X$ is \emph{chaotic} if it is topologically transitive, has density of periodic orbits, and is sensitive to initial conditions.
\end{definition}

\subsection{Equicontinuous points}
In this subsection we will generalize to the setting of pseudogroups some dynamical notions expressing regularity; we will need them in order to prove the Auslander--Yorke dichotomy.
\begin{definition}
	A point $x\in X$ is \emph{$(S,d)$-equicontinuous} for a generating set $S$ and a metric $d$ on $X$ if there is an assignment $\epsilon\mapsto\delta(\epsilon)$ so that
	\[
	d(x,y)<\delta(\epsilon)\quad\Longrightarrow\quad d(s(x),s(y))<\epsilon
	\]
	for every $s\in S^*$ and $y\in X$ with $x,y\in \dom s$.
	We say that a point $x$ is \emph{equicontinuous} if it is equicontinuous for some choice of $S$ and $d$.
\end{definition}

We will refer to any assignment $\epsilon\mapsto\delta$ satisfying the above condition as a \emph{modulus of equicontinuity} for $(S,d)$.

\begin{definition}
	The pseudogroup $\mathcal{G}\acts X$ is \emph{almost equicontinuous} if there are $S$ and $d$ so that the set of $(S,d)$-equicontinuous points is dense in $X$.
\end{definition}

\begin{lemma}
	If $x\in X$ is $(S,d)$-equicontinuous, then every $y\in \mathcal{G}x$ is $(S,d)$-equicontinuous (perhaps with a different modulus).
\end{lemma}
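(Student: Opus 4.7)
Let $y\in \mathcal{G}x$, so $y=g(x)$ for some $g\in \mathcal{G}$ with $x\in\dom g$. Fix a modulus of equicontinuity $\epsilon\mapsto\delta(\epsilon)$ for the point $x$ with respect to $(S,d)$. The plan is to ``pull back'' the modulus from $x$ to $y$ via $g$, using three ingredients: (a) Lemma~\ref{l:generate}, which gives a local representation of $g$ inside $S^*$; (b) the fact that $s\circ g|_U \in S^*$ whenever $g|_U\in S^*$ and $s\in S^*$ (closure of a pseudo{\textasteriskcentered}group under composition and restriction); and (c) continuity of $g^{-1}$ at $y$.

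First, by Lemma~\ref{l:generate}, pick a neighborhood $U$ of $x$ with $g|_U\in S^*$, and choose $\delta_0>0$ small enough that $B_d(x,\delta_0)\subset U$. Second, since $g\colon\dom g\to \im g$ is a homeomorphism between open sets and $y\in \im g$, fix $\eta_0>0$ with $B_d(y,\eta_0)\subset \im g$, and use the continuity of $g^{-1}$ at $y$ to produce, for each $\epsilon>0$, some $\delta'(\epsilon)\in(0,\eta_0]$ such that
\[
d(y,z)<\delta'(\epsilon)\quad\Longrightarrow\quad z\in\im g \ \text{ and }\ d(x,g^{-1}(z))<\min\bigl(\delta(\epsilon),\delta_0\bigr).
\]
I claim $\epsilon\mapsto\delta'(\epsilon)$ is a modulus of equicontinuity at $y$.

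To verify this, take any $s\in S^*$ with $y\in \dom s$, and any $z\in\dom s$ with $d(y,z)<\delta'(\epsilon)$. Then $z\in\im g$, so $w:=g^{-1}(z)$ is defined and lies in $U\cap g^{-1}(\dom s)$, while $x$ also lies in this set. The map $t:=s\circ g|_U$ belongs to $S^*$ by closure of $S^*$ under composition, and both $x,w\in\dom t$. Since $d(x,w)<\delta(\epsilon)$, the equicontinuity modulus at $x$ applied to $t\in S^*$ yields
\[
d(s(y),s(z))=d\bigl(t(x),t(w)\bigr)<\epsilon,
\]
as required. The only slight subtlety to watch is that the data $U$ and $\eta_0$ depend on the choice of $g$ representing $y\in\mathcal{G}x$, so the modulus $\delta'$ will in general differ from $\delta$; this matches the parenthetical remark in the statement. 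The whole argument is essentially bookkeeping, and there is no serious obstacle.
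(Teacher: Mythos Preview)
Your proof is correct and follows essentially the same approach as the paper's: both pull back the modulus at $x$ through an element of $S^*$ carrying $x$ to $y$, then use closure of $S^*$ under composition to reduce the estimate at $y$ to the one at $x$. The only cosmetic difference is that the paper directly invokes an $s\in S^*$ with $s(x)=y$ (since $S$ generates $\mathcal{G}$), whereas you start with $g\in\mathcal{G}$ and restrict via Lemma~\ref{l:generate} to land in $S^*$.
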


\begin{proof}
	Let $y\in \mathcal{G}x$. Since $S$ is a generating set for $\mathcal{G}$, there is some $s\in S^*$ so that $s(x)=y$. For every $\epsilon>0$, let $\delta'(\epsilon)>0$ be small enough so that
	\[
	B(y,\delta'(\epsilon))\subset \im s,\qquad s^{-1}(B(y,\delta'(\epsilon)))\subset B(x,\delta(\epsilon)).
	\] Then, for every $t\in S^*$, the restrictions of $t$ and $t s s^{-1}$ to $\dom t \cap B(y,\delta'(\epsilon))$ coincide, and thus
	\[
	t(B(y,\delta'(\epsilon)))=tss^{-1}(B(y,\delta'(\epsilon)))\subset ts(B(x,\delta(\epsilon))).
	\] 
	Since $ts\in S^*$ and $\delta\mapsto\epsilon$ is a modulus of equicontinuity for $(S,d)$, we obtain
	\[
	t(B(y,\delta'(\epsilon)))\subset B(t(y),\epsilon). \qedhere
	\]
\end{proof}

\subsection{Dynamics and compact generation}
We begin with some preliminary results for compactly generated pseudogroups.  The following proposition reveals that, for every system of compact generation $(U,F,\widetilde F)$ and every point $x\in U$, either the pseudo{\textasteriskcentered}group $\langle \widetilde F\rangle$ displays sensitivity to initial conditions on $x$, or every map in $\langle F\rangle$ defined on $x$ has an extension in $\langle \widetilde F\rangle$ whose domain contains a ball of a fixed radius $\rho>0$.

\begin{proposition}\label{p:halo}
	Let $\mathcal{G}$ be a compactly generated pseudogroup on $X$, let $d$ be a compatible metric, let $(U,F,\widetilde F)$ be a system of compact generation, and let
	\[
	\sigma:=\sigma(U,F,\widetilde F)=\sup\{r>0\mid B(u,r)\subset \dom \tilde f\ \ \forall f\in F,\ u\in\dom f\}>0.
	\]
	Then, for every $x\in U$, either
	\begin{enumerate}[(i)]
		\item \label{i:halo} there is $\rho>0$ such that $B(x,\rho)\subset \dom \tilde f$ for every $f\in \langle F\rangle$ with $\dom f\cap B(x,\rho)\neq \emptyset$; or,
		\item \label{i:nothalo} for every $r>0$, there are $y\in B(x,r)$ and $\tilde f\in\langle \widetilde F\rangle$ satisfying
		\[
		x,y\in\dom \tilde f,\qquad d(\tilde f(x),\tilde f(y))\geq \sigma/2.
		\]
	\end{enumerate}
\end{proposition}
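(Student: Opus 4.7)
The plan is to prove the contrapositive: assume~(\ref{i:halo}) fails at $x$, and derive~(\ref{i:nothalo}). Fix $r>0$; I will produce $y\in B(x,r)$ and a map $\tilde h\in\langle\widetilde F\rangle$ witnessing~(\ref{i:nothalo}) for this $r$. Applying the negation of~(\ref{i:halo}) at $\rho=r$ yields some $f\in\langle F\rangle$ with a representation $f=f_n\cdots f_1$ ($f_i\in F$) such that $\dom f\cap B(x,r)\neq\emptyset$ and $B(x,r)\not\subset\dom\tilde f$. Among all such witnessing representations (varying both $f$ and its representation as a word in $F$), choose one of minimum length $n$. Since the empty product represents $\id_X$, whose domain contains $B(x,r)$, we must have $n\geq 1$.

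The heart of the argument is to exploit minimality on the prefix $h:=f_{n-1}\cdots f_1$ (with $h=\id_X$ if $n=1$); by construction $\tilde h=\tilde f_{n-1}\cdots\tilde f_1\in\langle\widetilde F\rangle$. Pick witnesses $z\in\dom f\cap B(x,r)$ and $w\in B(x,r)\setminus\dom\tilde f$. Since $f=f_n h$ gives $\dom f\subset\dom h$, the point $z$ lies in $\dom h\cap B(x,r)$; thus the representation of $h$ meets the first half of the failure condition for $\rho=r$, and minimality of $n$ prevents it from meeting the second, forcing $B(x,r)\subset\dom\tilde h$. Consequently $x,z,w\in\dom\tilde h$, and I set $x':=\tilde h(x)$, $z':=\tilde h(z)=h(z)$, $w':=\tilde h(w)$.

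Now I compare $z'$ and $w'$ via the definition of $\sigma$. The inclusion $z\in\dom f=\dom(f_n h)$ gives $z'\in\dom f_n$, so by the definition of $\sigma$ we have $B(z',\sigma)\subset\dom\tilde f_n$. On the other hand, $w\in\dom\tilde h\setminus\dom\tilde f=\dom\tilde h\setminus\dom(\tilde f_n\tilde h)$ forces $w'\notin\dom\tilde f_n$, so $w'\notin B(z',\sigma)$ and $d(z',w')\geq\sigma$. The triangle inequality applied to $x',z',w'$ yields
\[
d(x',z')\geq\sigma/2 \quad\text{or}\quad d(x',w')\geq\sigma/2,
\]
and setting $y$ equal to $z$ or $w$ accordingly (each lies in $B(x,r)$) establishes~(\ref{i:nothalo}) with the map $\tilde h\in\langle\widetilde F\rangle$.

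The only subtle point is the one the paper already warns about: $\tilde f$ depends on the chosen representation $f=f_n\cdots f_1$, not on $f$ alone, so the minimization must be performed over representations rather than over maps. This is harmless—the set of witnessing representations is nonempty by failure of~(\ref{i:halo}), and their lengths are nonnegative integers—but it must be tracked when invoking minimality on the prefix $h$. I do not expect any further obstacle beyond this piece of bookkeeping.
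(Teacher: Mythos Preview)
Your proof is correct and follows essentially the same route as the paper's: both assume (\ref{i:halo}) fails, take a witnessing word $f=f_n\cdots f_1$, identify a prefix $\tilde h$ whose extended domain still contains $B(x,r)$ but such that the next letter $\tilde f_n$ fails to contain the image, and then use the definition of $\sigma$ and the triangle inequality to separate images by $\sigma/2$. The only cosmetic difference is that you minimize the word length over \emph{all} witnessing representations and take the prefix of length $n-1$, whereas the paper fixes one representation and picks the largest index $j$ with $B(x,r)\subset\dom(\tilde f_j\cdots\tilde f_1)$; these are equivalent packagings of the same combinatorial step.
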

\begin{proof}
	Suppose that~(\ref{i:halo}) does not hold, so that, for every $r>0$, there are $f\in \langle F\rangle$ and $y,z\in B(x,r)$ satisfying $z\in\dom f\subset \dom \tilde f$, $y\notin \dom \tilde f$.
	Let $f=f_n\cdots f_1$, where $f_i\in F$ for $i=1,\ldots,n$. Let $j$ be the largest index $0\leq j<n$ satisfying $B(x,r)\subset \dom \tilde f_j\cdots\tilde f_1$. This means that there is $y\in \dom\tilde f_j\cdots\tilde f_1$ such that
	\[
	\tilde f_j\cdots\tilde f_1(y)\notin \dom \tilde f_{j+1}.
	\]
	But $z\in \dom \tilde f_{j+1}\cdots\tilde f_1$, so
	\[
	d(\tilde f_j\cdots\tilde f_1(y),\tilde f_j\cdots\tilde f_1(z))\geq \sigma
	\]
	by the definition of $\sigma$.  Now the triangle inequality yields  either
	\[d(\tilde f_j\cdots\tilde f_1(x),\tilde f_j\cdots\tilde f_1(y))\geq \sigma/2\quad \text{or} \quad d(\tilde f_j\cdots\tilde f_1(x),\tilde f_j\cdots\tilde f_1(z))\geq \sigma/2.\qedhere\]
\end{proof}

The following result, which will be of use later, is a generalization of the well-known fact that equicontinuity and uniform equicontinuity agree for actions on compact spaces.

\begin{lemma}\label{l:uniformity}
	Let $\mathcal{G}$ be a compactly generated pseudogroup. If there is a metric $d$ and a generating pseudo{\textasteriskcentered}group $\mathcal{S}$ such that every point is a point of $(\mathcal{S},d)$-equicontinuity, then $\mathcal{G}$ is equicontinuous.
\end{lemma}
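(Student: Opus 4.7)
The plan is to exhibit the pseudo{\textasteriskcentered}group $\widetilde F^*$ coming from a system of compact generation $(U,F,\widetilde F)$ (chosen via Lemma~\ref{l:systemcg} with $\widetilde F^*\subset \mathcal{S}^*$) together with the given metric $d$ as a witness of uniform equicontinuity. Writing $\sigma=\sigma(U,F,\widetilde F)>0$ and $K=\bigcup_{f\in F}\dom\tilde f$, the set $\overline K$ is compact and contains $\dom\tilde g$ for every $\tilde g\in\langle\widetilde F\rangle$, so all relevant dynamics occur inside $\overline K$.

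First, for each $x\in\overline K$, the hypothesis provides a modulus $\delta_x$ of $(\mathcal{S},d)$-equicontinuity that also governs maps in $\widetilde F^*\subset \mathcal{S}^*$. This modulus rules out alternative~(ii) of Proposition~\ref{p:halo}: any $r<\delta_x(\sigma/2)$ would force some $\tilde f\in\langle\widetilde F\rangle$ and $y\in B(x,r)\cap\dom\tilde f$ with $d(\tilde f x,\tilde f y)\geq\sigma/2$, contradicting the modulus at $x$. (The proof of Proposition~\ref{p:halo} adapts routinely from $x\in U$ to $x\in\overline K$, and the halo statement is vacuous for $x$ far from $\overline K$.) Hence alternative~(i) yields $\rho_x>0$ so that $B(x,\rho_x)\subset\dom\tilde f$ whenever $f\in\langle F\rangle$ satisfies $\dom f\cap B(x,\rho_x)\neq\emptyset$.

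Next, fix $\epsilon>0$, set $\mu_x(\epsilon)=\tfrac13\min(\rho_x,\delta_x(\epsilon/2))$, and let $\eta(\epsilon)>0$ be a Lebesgue number for the open cover $\{B(x,\mu_x(\epsilon))\}_{x\in\overline K}$ of the compact set $\overline K$. To check $\eta$ is a uniform modulus, take $\tilde g\in\widetilde F^*$---so $\tilde g=\tilde{g_0}|_V$ with $\tilde{g_0}=\tilde f_n\cdots\tilde f_1\in\langle\widetilde F\rangle$---and points $y,z\in\dom\tilde g\subset\overline K$ with $d(y,z)<\eta(\epsilon)$. The Lebesgue property selects $x\in\overline K$ with $\{y,z\}\subset B(x,\mu_x(\epsilon))\subset B(x,\rho_x/3)$; iterating the halo at $x$ across the composition $\tilde{g_0}=\tilde f_n\cdots\tilde f_1$ shows $x\in\dom\tilde{g_0}$. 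The modulus $\delta_x$ applied to $\tilde{g_0}\in\mathcal{S}^*$ then yields $d(\tilde{g_0} x,\tilde{g_0} y)<\epsilon/2$ and $d(\tilde{g_0} x,\tilde{g_0} z)<\epsilon/2$, so the triangle inequality gives $d(\tilde g y,\tilde g z)<\epsilon$, as required.

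The main obstacle is precisely the iterated halo step. The halo hypothesis in Proposition~\ref{p:halo} calls for $\dom f\cap B(x,\rho_x)\neq\emptyset$ with $f\in\langle F\rangle$, whereas we are handed only $y,z\in\dom\tilde{g_0}$, and $\dom g_0$ may be strictly smaller than $\dom\tilde{g_0}$. Circumventing this requires showing that, as one passes from $\tilde f_1$ to $\tilde f_n$, each intermediate image $\tilde f_j\cdots\tilde f_1(B(x,\rho_x))$ continues to meet some $\dom f_{j+1}$, so that the halo may be invoked afresh at every stage. This propagation of the halo hypothesis along the composition---essentially a stagewise replay of the induction inside the proof of Proposition~\ref{p:halo}---is the technical core of the argument.
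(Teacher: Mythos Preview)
Your Lebesgue-number strategy is sound and in fact cleaner than the paper's route, but the ``main obstacle'' you identify is self-inflicted: it arises only because you chose $\widetilde F^{*}$ as the witnessing pseudo{\textasteriskcentered}group. Use $F^{*}$ instead, which already generates $\mathcal{G}|_{U}$. For $g\in F^{*}$ write $g=(g_{0})|_{V}$ with $g_{0}=f_{n}\cdots f_{1}\in\langle F\rangle$; then $y,z\in\dom g\subset\dom g_{0}$, so the halo hypothesis $\dom g_{0}\cap B(x,\rho_{x})\neq\emptyset$ is immediate and Proposition~\ref{p:halo}(i) yields $B(x,\rho_{x})\subset\dom\tilde g_{0}$ in one stroke. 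Now $x,y,z\in\dom\tilde g_{0}$ and the modulus at $x$ gives $d(\tilde g_{0}y,\tilde g_{0}z)<\epsilon$ by the triangle inequality exactly as you wrote; no ``stagewise replay'' is needed. Two minor repairs: the set $K=\bigcup_{f\in F}\dom\tilde f$ need not be relatively compact (nothing in the definition of compact generation bounds $\dom\tilde f$), so run the Lebesgue argument on $\overline U$; and since $F^{*}$ generates only $\mathcal{G}|_{U}$, finish by invoking Lemma~\ref{l:equivid} and Proposition~\ref{p:equicontinuityinvariant}, as the paper does.

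For comparison, the paper does not try to obtain a uniform modulus for the given metric $d$. Instead it builds a cover by quasi-locally equal metric spaces $\{(U_{x},d_{x})\}$ with $d_{x}(y,z)=\sup\{d(\tilde f y,\tilde f z)\mid f\in\langle F\rangle,\ \dom f\cap U_{x}\neq\emptyset\}$, verifies compatibility of each $d_{x}$, proves a claim comparing $d_{x}$ and $d_{y}$ via the moduli $\delta_{x},\delta_{y}$, and only then passes to a finite subcover. Your approach bypasses the quasi-local machinery entirely, at the cost of needing the halo to place the auxiliary point $x$ in $\dom\tilde g_{0}$; once you pick the right pseudo{\textasteriskcentered}group this cost is nil.
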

\begin{proof}
	Let $(U,F,\widetilde F)$ be a system of compact generation with $\widetilde F\subset \mathcal S$ (Lemma~\ref{l:systemcg}). 
 By Proposition~\ref{p:halo},  for every $x\in U$ there is $\rho_x>0$ such that $U_x:=B(x,\rho_x)\subset U$ and
	\begin{equation*}
 U_x\subset\dom\tilde f\qquad \text{for every}\ f\in \langle F\rangle\ \text{with}\ \dom f\cap U_x\neq \emptyset.
	\end{equation*}
	The sets $U_x$, $x\in U$, form an open cover of $U$. 
    Since $\mathcal{G}|_U$ is equivalent to $\mathcal{G}$, it must also be compactly generated, so choose some relatively compact open subset $V\subset U$  meeting every orbit, and choose a finite family $\{U_x\}_{x\in I}$ covering $V$.
    Shrink every $U_x$ to obtain another finite covering $\{V_x\}_{x\in I}$ of $V$ and such that $\overline{V_x}\subset U_x$ for every $x\in I$.

    Let $\epsilon>0$ and cover each set $\overline{V_x}$ $(x\in I)$ by finitely many balls $B(z_{x,j},\delta_{z_{x,j}}(\epsilon/2))$, where $z_{x,j}\in U_x$ and $\delta_{z_{x,j}}$ is a modulus of $(\mathcal S,d)$-equicontinuity at $z_{x,j}$. Let $\delta'(\epsilon)$ be a Lebesgue number for all these coverings; that is, if $u,v$ are contained in some $V_x$ for some $x\in I$ and $d(u,v)<\delta'(\epsilon)$, then there is some $j$ such that $u,v\in B(z_{x,j},\delta_{z_{x,j}}(\epsilon/2))$.

    Let us now show that $\mathcal{G}|_{V}$ is equicontinuous with respect to the cover by quasi-local metric spaces ${(V_x,d)}_{x\in I}$ (see Definition~\ref{d:qlm}). Choose $\langle F\rangle|_{V}$ as a generating pseudo$\ast$group for $\mathcal{G}|_{V}$, let $x,y\in I$, $u,v\in V_x$ and $f\in\langle F\rangle$ be such that $fu,fv\in V_y$, then we need to show that 
    \[
    d(u,v)<\delta'(\epsilon)\quad\Longrightarrow\quad d(fu,fv)<\epsilon\quad \text{for every}\ \epsilon>0.
    \]
    By our choice of $\delta'(\epsilon)$, there is some $z:=z_{x,j}$ so that $u,v\in B(z,\delta_{z}(\epsilon/2))$. Moreover, we have already established that $U_x\subset\dom \tilde f$, so $\tilde fz$ is well-defined. Now we get
    \[
    d(f u,\tilde fz),\ d(f v, \tilde f z )<\epsilon/2,
    \]
    so $d(fu,fv)< \epsilon $ by the triangle inequality.
 
	We have proved that $\mathcal{G}|_V$ is equicontinuous with respect to the cover by quasi-local metric spaces $\{(U_{x_i},d_{x_i})\}$. Since $\mathcal{G}|_V$ is equivalent to $\mathcal{G}$, the result follows by Proposition~\ref	{p:equicontinuityinvariant}.
\end{proof}

\begin{proof}[Proof of Thm.~\ref{t:sicactionpseudogroup}]
	Let us prove that, if the action $G\acts X$ is sensitive, then the induced pseudogroup $\mathcal{G}$ is sensitive too, the converse implication being trivial. Let $c_G>0$ be a sensitivity constant for $G\acts X$, let $H=\{f_1,\ldots,f_n\}\subset G$ be a symmetric  finite generating set (in the group-theoretic sense), let $d$ be a metric on $X$, and let $\mathcal{S}$ be a generating pseudo{\textasteriskcentered}group for $\mathcal{G}$. Since $H\subset \mathcal{G}$ and $\mathcal{S}$ generates $\mathcal{G}$, Lemma~\ref{l:generate} yields a finite sequence of open coverings of $X$, 
	\[
	\widetilde{\mathcal{U}}_i=\{\widetilde{U}_{i,j}\},\qquad i=1,\ldots,n,
	\] such that 
	\[\tilde f_{i,j}:=(f_i)|_{\widetilde U_{i,j}}\in\mathcal{S}\qquad \text{for every} \ i,j;\]  furthermore, we may assume that every $\widetilde{\mathcal{U}}_i$ is finite because $X$ is compact. Let $\mathcal{U}_i=\{U_{i,j}\}$ be a shrinking of $\widetilde{\mathcal{U}}_i$,  let $ f_{i,j}:=(f_i)|_{U_{i,j}}$, and let
	\[
	F=\{f_{i,j}\},\qquad \widetilde F=\{\tilde f_{i,j}\}.
	\]
	Then $(X,F,\widetilde F)$ is a system of compact generation for $\mathcal{G}$.
	
	Let us show that $(S,d)$ is sensitive with constant $c_F:=\min\{\sigma/2,c_G\}$, where $\sigma:=\sigma(X,F,\widetilde F)$ is given by Proposition~\ref{p:halo} (note that its value does not depend on $x$).
	If $(X,F,\widetilde F)$ satisfies \ref{p:halo}(\ref{i:nothalo}) at every point in $X$, then we are done, so suppose that \ref{p:halo}(\ref{i:halo}) holds for some $x\in X$ and  $\rho>0$. Since $G$ is sensitive, there are \[g=f_{i_k}\cdots f_{i_1}\in G\] and $y\in B(x,\rho)$ with $d(g(x),g(y))\geq c_G$. Clearly, there is a sequence $j_1,\ldots, j_k$ such that $x\in\dom h$, where $h=f_{i_k,j_k}\cdots f_{i_1,j_1}$. But $B(x,\rho)\subset\dom \tilde h$ by \ref{p:halo}(\ref{i:halo}),  whence $y\in \dom \tilde h$ and
	\[
	d(\tilde h(x),\tilde h(y))\geq c_G\geq c_F.
	\]
	This shows that $\mathcal{G}$ is $(\mathcal{S},d)$-sensitive and, since both $\mathcal{S}$ and $d$ were arbitrary, the result follows.
\end{proof}

\subsection{Sensitive group actions whose induced pseudogroups are not sensitive}\label{s:nonsensitiveaction}

In this section we construct the counterexamples of Theorem~\ref{t:linked}. Let us start by defining a family of linked twists on the $2$-torus $\mathbb{T}^2$:
Let $p_z$ ($z\in\mathbb{Z}$) denote the following integer-indexed sequence of real numbers:
\[
p_z=\begin{cases}
	1-2^{-1-z} \quad &\text{if}\ z\geq 1,\\
	2^{z-2} \quad &\text{if}\ z\leq 0.
\end{cases}
\]
Let 
\[H=\{(x,y)\in \mathbb{T}^2\mid 1/4\leq y\leq 3/4\},\]
and  
\[
V_z=\{(x,y)\in \mathbb{T}^2\mid p_z\leq x\leq p_{z+1}\}\qquad (z\in \mathbb{Z}).
\] 
Let $T_h\colon \mathbb{T}^2\to \mathbb{T}^2$ be the horizontal twist defined by
\[
T_h(x,y)=\begin{cases}
	(x+2(y-\frac{1}{4}),y)\quad &\text{if}\ (x,y)\in H,\\
	(x,y)\quad &\text{else};
\end{cases}
\]
and, for $m\in\mathbb{N}$, let $T_{v,m}\colon\mathbb{T}^2\to \mathbb{T}^2$ be  the vertical twist:
\[
T_{v,m}(x,y)=\begin{cases}
	(x,y+2^{2+|z|}(x-p_z))\quad &\text{if}\ (x,y)\in V_z,\ |z|\leq m,\\
	(x,y)\quad &\text{else}.
\end{cases}
\]
Letting $T_m=T_{v,m}\circ T_h$ we obtain a  sequence of linked twist maps on $\mathbb{T}^2$.

By Theorem~\ref{t:twisttt}, $T_z$ is topologically transitive on 
\[
M_z= H\cup \bigcup_{|z|\leq m} V_z
\]
and is the identity on $\mathbb{T}^2\setminus M_m$ for every $m\in\mathbb{N}$. Note that, by the definition of the sequence $p_z$, we have
\begin{equation}\label{unionmm}
	\bigcup_{m\geq 0} M_m=\mathbb{T}^2 \setminus\big(\{0\}\times([0,1/4)\cup (3/4,1])\big).
\end{equation}
Moreover, $T_m$ is affine on $\mathbb{T}^2\setminus \Delta_m$, where
\[
\Delta_m:= \partial H\cup \bigcup_{|z|\leq m}T_h^{-1}(\partial V_z).
\]

\begin{lemma}
	Let $(p/q,r/s)\in \mathbb{T}^2$ be a point with rational coordinates. Then, for every $m\geq 0$, its $T_m$-orbit is contained in the finite set 
	\[
	\left\{\left(\frac{l_1}{d},\frac{l_2}{d}\right)\mid l_1,l_2\in \{0,\ldots,d-1\}\right\},
	\]
	where $d=\lcm(q,s,2^{m+2})$.
\end{lemma}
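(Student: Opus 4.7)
The plan is to interpret $d$ as $\lcm(q,s,2^{m+2})$ and then simply verify that the finite set
\[
S_d:=\left\{\left(\tfrac{l_1}{d},\tfrac{l_2}{d}\right)\;\middle|\; l_1,l_2\in\{0,\ldots,d-1\}\right\}\subset\mathbb{T}^2
\]
is $T_m$-invariant. Since $T_m$ is a composition, it suffices to check separately that $T_h$ and $T_{v,m}$ each map $S_d$ into itself, and then to observe that the starting point $(p/q,r/s)$ lies in $S_d$ because $q\mid d$ and $s\mid d$.

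For $T_h$, the map is the identity outside $H$, so only points $(x,y)\in H$ need inspection. There we have $T_h(x,y)=(x+2y-\tfrac12,y)\bmod 1$. If $x,y\in\tfrac1d\mathbb{Z}$, then each of $x$, $2y$, and $\tfrac12$ lies in $\tfrac1d\mathbb{Z}$ (for $\tfrac12$ we use that $2\mid 2^{m+2}\mid d$), so the new $x$-coordinate remains in $\tfrac1d\mathbb{Z}$ after reduction modulo $1$. For $T_{v,m}$, I would again handle the nontrivial case $(x,y)\in V_z$ with $|z|\leq m$, where $T_{v,m}(x,y)=(x,\,y+2^{2+|z|}(x-p_z))\bmod 1$. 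Inspecting the definition of $p_z$, one sees that for $|z|\leq m$ the denominator of $p_z$ is at most $2^{m+2}$ (attained at $z=-m$, where $p_{-m}=2^{-m-2}$), hence divides $d$. Consequently $x-p_z\in\tfrac1d\mathbb{Z}$; multiplying by the integer $2^{2+|z|}$ keeps the result in $\tfrac1d\mathbb{Z}$, and adding $y\in\tfrac1d\mathbb{Z}$ and reducing modulo $1$ preserves this property.

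Combining both steps, $T_m(S_d)\subset S_d$, so the $T_m$-orbit of $(p/q,r/s)\in S_d$ is contained in the finite set $S_d$, as claimed. The only real point of care is to confirm that the denominators of the constants appearing in the twist formulas ($\tfrac14$ in $T_h$ and the endpoints $p_z$ for $|z|\leq m$ in $T_{v,m}$) are absorbed by $d$; the choice $d=\lcm(q,s,2^{m+2})$ is made precisely to guarantee this, since $2^{m+2}$ is the largest power of $2$ appearing in any such denominator. No further dynamical input is needed.
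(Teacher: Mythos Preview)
Your proof is correct and is precisely the verification the paper has in mind: the paper's own proof reads in its entirety ``Follows from the definition of $T_h$ and $T_{v,m}$,'' and you have supplied the details of that computation, including the sensible correction of the evident typo $2^{-m-2}$ to $2^{m+2}$. Your check that the denominators of $\tfrac14$ and of each $p_z$ with $|z|\le m$ divide $2^{m+2}$ is exactly the point, and nothing further is needed.
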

\begin{proof}
	Follows from the definition of $T_h$ and $T_{v,m}$.
\end{proof}

We are now in position to introduce our examples. We begin by showing that an action $G\acts X$ with $G$ finitely generated but $X$ non-compact might be sensitive, while the induced pseudogroups is not. Let $X=\mathbb{T}^2\times \mathbb{Z}$ and let
\[
\sigma((x,y),z)= (T_{|z|}(x,y), z),\qquad \tau((x,y),z)=((x,y),z+1).
\]

\begin{proposition}\label{p:gmathcalg}
	The subgroup of $\Homeo(X)$ generated by $\sigma$ and $\tau$ is sensitive as a group action. The pseudogroup generated by $\sigma$ and $\tau$, however, is not sensitive to initial conditions (in the sense of Definition~\ref{d:sensitivity}).
\end{proposition}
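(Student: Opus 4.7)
My plan splits into two parts: showing $G \acts X$ is sensitive, and exhibiting $(\mathcal{S}, d)$ witnessing the non-sensitivity of the pseudogroup $\mathcal{G}$.

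For the group, I exploit the conjugation identity $\tau^{z-m}\sigma\tau^{m-z}(p, z) = (T_m p, z)$ (for $m \geq 0$), which deploys every linked twist $T_m$ onto every slice. Given $x = ((x_0, y_0), z)$ and $r < 1$ (so $B(x, r) \subset \mathbb{T}^2 \times \{z\}$), equation~\eqref{unionmm} and the nowhere-density of the ``bad line'' $L = \{0\} \times ([0, 1/4) \cup (3/4, 1])$ allow me to pick some large $m$ and a perturbation $x' = ((x_0', y_0'), z) \in B(x, r)$ with $(x_0', y_0') \in M_m$ (and $(x_0, y_0) \in M_m$ when not on $L$; if $(x_0, y_0) \in L$, note $T_m$ fixes $L$, so a single-sided separation suffices). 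Theorem~\ref{t:twisttt} then yields an iterate $T_m^n$ separating the two $\mathbb{T}^2$-projections by at least $c_m$; inspecting the explicit formula for $T_m$ shows these constants can be taken uniform in $m$, giving the required sensitivity constant via $g = (\tau^{z-m}\sigma\tau^{m-z})^n$.

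For the pseudogroup, I construct an explicit equicontinuous point. Take $d$ Euclidean on each slice with distinct slices at distance $1$, and let $\mathcal{S}$ be the pseudo$*$group generated by $\{\tau^{\pm 1}|_W \mid W \subset X \text{ open}\}$ together with $\{\sigma^{\pm 1}|_V \mid V \text{ open in some slice } \mathbb{T}^2 \times \{z\},\ \diam V \leq 2^{-|z|-10}\}$. This generates $\mathcal{G}$ by Lemma~\ref{l:generate}, since any word in $\sigma^{\pm 1}, \tau^{\pm 1}$ can be realised on a sufficiently small neighbourhood of any point as a composition of these generators. The geometric heart of the argument is that $T_m$ fixes pointwise the ``safe zone'' $U_m := \{(x, y) \in \mathbb{T}^2 \mid x \in [0, 2^{-m-2}) \cup (1 - 2^{-m-2}, 1),\ y \in [0, 1/4) \cup (3/4, 1)\}$, because the $y$-condition places the point outside $H$ and the $x$-condition places it outside $\bigcup_{|j| \leq m} V_j = [2^{-m-2}, 1 - 2^{-m-2}]$, by the definition of the sequence $p_z$.

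I claim $x_0 := ((0, 0), 0)$ is $(\mathcal{S}, d)$-equicontinuous with modulus $\delta(c) = c/2$, which immediately yields non-sensitivity of $\mathcal{G}$. Given $c > 0$, set $r = c/2$, and let $s = s_N \circ \cdots \circ s_1 \in \mathcal{S}$ with $x_0 \in \dom s$ and $y = ((x_0', y_0'), 0) \in B(x_0, r) \cap \dom s$. By induction on $i$, I show $s_i \circ \cdots \circ s_1(x_0) = ((0, 0), k_i)$ and $s_i \circ \cdots \circ s_1(y) = ((x_0', y_0'), k_i)$ for a common slice $k_i$: a $\tau^{\pm 1}$-step merely shifts $k_i$, while for a $\sigma^{\pm 1}|_V$-step on slice $k_i$, $V$ must contain both trajectory points, so $d_{\mathbb{T}^2}((0, 0), (x_0', y_0')) \leq \diam V \leq 2^{-|k_i|-10} < 2^{-|k_i|-2}$, placing $(x_0', y_0') \in U_{|k_i|}$ and forcing $T_{|k_i|}^{\pm 1}$ to fix both points. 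Consequently $d(s(x_0), s(y)) = d_{\mathbb{T}^2}((0, 0), (x_0', y_0')) < r < c$, establishing the claim. The main obstacle is the bookkeeping required to check that $\mathcal{S}$ genuinely generates $\mathcal{G}$ under the imposed diameter restriction (and, for the first part, securing the uniform lower bound on the sensitivity constants $c_m$).
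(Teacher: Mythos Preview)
Your Part~2 is correct and uses a different generating pseudo{\textasteriskcentered}group than the paper. The paper restricts $\sigma$ to the set where it acts nontrivially (namely $\{((x,y),z)\mid (x,y)\in M_{|z|}\}$), so that no $\sigma$-type generator is even defined along the orbit $\{((0,0),z)\}$, leaving only the isometries $\tau|_O$. Your construction instead keeps $\sigma$ defined everywhere but forces its domains on slice $z$ to have diameter at most $2^{-|z|-10}$; the induction you outline then shows that whenever such a generator sees both $((0,0),k_i)$ and $((x_0',y_0'),k_i)$, the diameter bound places $(x_0',y_0')$ inside the fixed-point region $U_{|k_i|}$ of $T_{|k_i|}$. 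Both arguments exploit the same geometric fact, but your version has the pleasant feature that the constraint is enforced dynamically by the domain of $s$ rather than by the initial radius $r$.

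Part~1, however, has a genuine gap. You deploy $T_m$ on slice $z$ via conjugation and then invoke Theorem~\ref{t:twisttt} to separate points by some $c_m>0$; you then assert that ``inspecting the explicit formula for $T_m$ shows these constants can be taken uniform in $m$.'' This is precisely the nontrivial step, and you have not carried it out: Theorem~\ref{t:twisttt} gives no quantitative control on the sensitivity constant of $T_m|_{M_m}$, and since the choice of $m$ depends on the point $x$ (in particular $m\to\infty$ as $x$ approaches the bad set $L$), a uniform lower bound on $c_m$ is essential. The paper circumvents this issue entirely: rather than appealing to sensitivity of $T_m$ for variable $m$, it uses the \emph{topological transitivity} of $T_{n+z}$ on $M_{n+z}$ to transport the image of a neighbourhood into $M_0\times\{0\}$, and then invokes sensitivity (or, in the degenerate case, transitivity again) of the single fixed map $T_0$ on $M_0$. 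This yields one sensitivity constant $c$ determined solely by $T_0$, with no uniformity hypothesis required. You could repair your argument along these lines: after conjugating to realise $T_m$ on slice $z$, use transitivity of $T_m$ to push a sub-neighbourhood into $M_0$, then switch to iterating $T_0$ (via a further conjugation) to effect the separation.
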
 
\begin{proof}
	We start by proving that the induced group action is sensitive. Let $d$ be a compatible metric on $X$ that restricts to the standard flat metric $d_z$ on every  $\mathbb{T}^2\times\{z\}\cong \mathbb{T}^2$. Choose  $c>0$ such that 
	\begin{itemize}
		\item the action of $T_0$ on $M_0$ is $c$-sensitive (with respect to $d_0$),  and
		\item $c<1/8$.
	\end{itemize} 
	Let $((x,y),z)\in X$, and let $U\times\{z\}$ be a neighborhood of $((x,y),z)$. By~\eqref{unionmm}, $\bigcup_z M_{|z|}$ is dense in $\mathbb{T}^2$, so there is $n>0$ such that 
	\[\tau^n(U\times\{z\})\cap \left(M_{n+z}\times\{n+z\}\right)\neq \emptyset.\]
	Since $T_{n+z}$ is topologically transitive on $M_{n+z}$, there is $m>0$ so that 
	\[
	\sigma^m\tau^n(U\times\{z\})\cap (M_{0}\times\{n+z\})\neq \emptyset,
	\]
	and therefore
	\[
	\tau^{-n-z}\sigma^m\tau^n(U\times\{z\})\cap (M_{0}\times\{0\})\neq \emptyset.
	\]
	Let $\phi=\tau^{-n-z}\sigma^m\tau^n$ for the sake of simplicity. Since the action of $T_0$ is $c$-sensitive on $M_0$,  if $\phi((x,y),z)\in M_0\times\{0\}$, then there are $l>0$ and $((u,v),z)\in U\times\{z\}$ such that 
	\[
	d(\sigma^l\phi((x,y),z),\sigma^l\phi((u,v),z))\geq c.
	\]
	If $\phi((x,y),z)\notin M_0\times\{0\}$, then, since
	the action of $T_0$ is topologically transitive on $M_0$, there are $l>0$ and $((u,v),z)\in U\times\{z\}$ such that 
	\[
	\tau^l\phi((u,v),z)\in \left[\frac{3}{8},\frac{5}{8}\right]^2\times\{0\}.
	\]
	But $[1/4,3/4]\subset M_0$ and $\phi((x,y),z)\notin M_0\times\{0\}$, so
	\[
	d(\sigma^l\phi((x,y),z),\sigma^l\phi((u,v),z))\geq\frac{1}{8}\geq c.
	\]
	
	Let us now show that the pseudogroup $\mathcal{G}$ generated by $\sigma$ and $\tau$ is not sensitive. Consider the point $((0,0),0)\in X$. Note that, since $(0,0)\notin M_m$ for every $m\in \mathbb{N}$, $\sigma$ is the identity on a neighborhood of $((0,0),z)$ for every $z\in \mathbb{Z}$. 
	For each $z\in \mathbb{Z}$, let $U_z,$ $O_z$ be  open neighborhoods of $(0,0)$ such that $\overline{O_z}\subset \mathbb{T}^2 \setminus M_{|z|+1}$ and $\overline{U_z}\subset O_z$. Now let
	\[
	U=\bigcup_{z\in\mathbb{Z}} (\mathbb{T}^2\setminus \overline{U}_z)\times \{z\},\qquad O=\bigcup_{z\in\mathbb{Z}} O_z\times \{z\},
	\]
	and consider the pseudo{\textasteriskcentered}group $\mathcal{S}$ generated by
	\[
	F=\{\sigma|_U, \sigma|_O, \tau|_U, \tau|_O\}.
	\]
    Clearly, $\mathcal{S}$ generates $\mathcal{G}$.
	Since 
	\[
	\mathcal{G}((0,0),0)=\{((0,0),z)\mid z\in\mathbb{Z}\},
	\] 
	the only maps in $F$ that are defined on the orbit of $((0,0),0)$ are $\tau|_O$ and $\sigma|_O$, which are isometries (recall that $\sigma|_O=\id|_O$), so we have that every map in $\mathcal{S}$ defined on $((0,0),0)$ is an isometry with respect to $d$, and therefore $\mathcal{G}$ is not sensitive to initial conditions.
\end{proof}

We have constructed the first counterexample of Theorem~\ref{t:linked}.
We can repurpose this machinery to obtain the second counterexample: an action $F_\omega\acts \mathbb{T}^2$ that does not satisfy Theorem~\ref{t:sicactionpseudogroup}, where $F_\omega$ is the free group with countably many generators. Define the action by mapping a sequence freely generating  $F_\omega$ to the sequence  $T_m$, $m\geq 0$. The proof that $F_\omega\acts \mathbb{T}^2$ is sensitive and the pseudogroup is not sensitive is virtually identical to Proposition~\ref{p:gmathcalg}, so we leave the details to the reader.

\subsection{Main results}\label{s:main}

In this section we will prove Theorems~\ref{t:invariant},~\ref{t:auslanderyorkepseudo}, and~\ref{t:ttdposicpseudo}, in that order.
We begin with the following preliminary result, which follows arguments from Lemma~8.8 and Theorem 15.1 in~\cite{AlvarezCandel}.

\begin{proposition}\label{p:invariant}
	Let $\mathcal{G}$ act on a locally compact and separable metric space $(X,d)$, let $\mathcal{S}\subset \mathcal{G}$ be a generating pseudo{\textasteriskcentered}group, and let $\Phi\colon (X,\mathcal{G})\to (Y,\mathcal{H})$ be an equivalence. Then there is a generating pseudo{\textasteriskcentered}group $\mathcal{T}$ for $\mathcal{H}$ and a metric $d'$ on $Y$ satisfying the following condition:  if there are $x\in X$, $\epsilon,\delta>0$  such that
	\[
	d(x,u)<\delta\quad\Longrightarrow\quad d(sx,su)<\epsilon
	\]  
	for every $u\in X$ and $s\in \mathcal{S}$ with $x,u\in\dom s$,
	then, for every $y\in\Phi(x)$ there is $\delta_y>0$ such that
	\[
	d'(y,v)<\delta_y\quad\Longrightarrow\quad d'(ty,tv)<\epsilon
	\]
	for every  $t\in \mathcal{T}$ with $y,v\in\dom t$.
\end{proposition}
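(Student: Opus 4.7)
The plan is to transport $\mathcal{S}$ and $d$ across the equivalence $\Phi$, producing $\mathcal{T}$ and $d'$ that locally mirror the original data. The metric $d'$ will be assembled from pushed-forward local metrics via the quasi-local metric machinery of Definition~\ref{d:qlm} and Theorem~15.1 of~\cite{AlvarezCandel}, and $\mathcal{T}$ will be generated by local conjugates of the form $\phi_1\circ s\circ\phi_0^{-1}$.

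First I would select a countable subfamily $\Phi_0 \subset \Phi$: since $X$ is a locally compact Polish space, hence paracompact and Lindel\"of, I can choose $\Phi_0$ locally finite with $\{\dom\phi\mid\phi\in\Phi_0\}$ a relatively compact open cover of $X$ and $\{\im\phi\mid\phi\in\Phi_0\}$ an open cover of $Y$. By Lemma~\ref{l:morphismgenerate} together with Lemma~\ref{l:equivmaximal}, $\Phi_0$ generates $\Phi$. For each $\phi\in\Phi_0$ I would define a metric $d_\phi$ on $\im\phi$ by
\[
d_\phi(y_1,y_2):=d\bigl(\phi^{-1}(y_1),\phi^{-1}(y_2)\bigr),
\]
and verify that $\{(\im\phi,d_\phi)\mid\phi\in\Phi_0\}$ is a cover of $Y$ by quasi-locally equal metric spaces: for $y\in\im\phi\cap\im\phi'$, the transition $\phi'^{-1}\phi$ belongs to $\mathcal{G}$ by Axiom~\ref{d:equiv}(\ref{i:equivgamma}) and is uniformly continuous with respect to $d$ on some relatively compact neighborhood of $\phi^{-1}(y)$, which yields the required modulus. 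Theorem~15.1 of~\cite{AlvarezCandel} then gives a compatible metric $d'$ on $Y$ such that $\{(Y,d')\}\cup\{(\im\phi,d_\phi)\}$ is still a cover by quasi-locally equal metric spaces.

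Next, let $\mathcal{T}$ be the pseudo{\textasteriskcentered}group generated by
\[
\Sigma:=\{\phi_1\circ s\circ\phi_0^{-1}\mid \phi_0,\phi_1\in\Phi_0,\ s\in\mathcal{S}\}.
\]
Using Lemma~\ref{l:generate}, the facts that $\mathcal{S}$ generates $\mathcal{G}$ and $\Phi_0$ generates $\Phi$, and Axiom~\ref{d:equiv}(\ref{i:equivgamma}), one checks that $\mathcal{T}$ generates $\mathcal{H}$. For the transfer, fix $y=\phi_0(x)$ with $\phi_0\in\Phi_0$ and $t\in\mathcal{T}$ with $y\in\dom t$. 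On a small enough neighborhood $W$ of $y$, $t|_W$ can be written as $\phi_1\circ s\circ\phi_0^{-1}|_W$ for some $\phi_1\in\Phi_0$ and $s\in\mathcal{S}$ with $x\in\dom s$: any composition of $\Sigma$-elements produces intermediate transitions $\phi^{-1}\psi$ ($\phi,\psi\in\Phi_0$) lying in $\mathcal{G}$ and, by Lemma~\ref{l:generate}, locally in $\mathcal{S}^{\ast}=\mathcal{S}$, so they can be absorbed into adjacent $\mathcal{S}$-terms; a further absorption of $\phi_0^{(1)^{-1}}\phi_0$ lets us present the outermost right factor as $\phi_0^{-1}$, the map fixed in the statement. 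For $v=\phi_0(u)\in W$, a direct calculation yields $d_{\phi_0}(y,v)=d(x,u)$ and $d_{\phi_1}(ty,tv)=d(sx,su)$. Combining the quasi-local equality of $d'$ with $d_{\phi_0}$ near $y$ and with $d_{\phi_1}$ near $ty$ with a limiting argument in which $v$ is approached by $v_n\in W$ satisfying $d'(y,v_n)\to d'(y,v)$ converts the hypothesis $d(x,u)<\delta\Rightarrow d(sx,su)<\epsilon$ into the conclusion $d'(y,v)<\delta\Rightarrow d'(ty,tv)\leq\epsilon$.

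The main obstacle is the sharp transfer with the \emph{same} pair $(\delta,\epsilon)$ on both sides. The quasi-local technology produces $d'$ that agrees with each $d_\phi$ only up to a modulus, not exactly; the non-strict bound $\leq\epsilon$ in the conclusion is precisely the cost of this slack, realized via a pinching/limiting argument along sequences $v_n\to v$ inside neighborhoods on which the quasi-local comparisons become arbitrarily tight.
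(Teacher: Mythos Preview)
Your outline captures the right architecture---push forward $d$ along charts in $\Phi$, generate $\mathcal{T}$ from conjugates $\phi_1 s\phi_0^{-1}$---but the decisive step fails. You invoke Theorem~15.1 of~\cite{AlvarezCandel} as a black box to produce a metric $d'$ that is merely \emph{quasi-locally equal} to the family $\{d_\phi\}$; this gives comparisons $d_\phi<\delta(\epsilon)\Rightarrow d'<\epsilon$ and conversely, with some fixed modulus $\epsilon\mapsto\delta(\epsilon)$ that is in general far from the identity. To transfer the \emph{same} pair $(\delta,\epsilon)$ you need the pointwise inequalities $d_{\phi_0}(y,v)\leq d'(y,v)$ (so that $d'(y,v)<\delta$ forces $d(x,u)<\delta$) and $d'(ty,tv)\leq d_{\phi_1}(ty,tv)$ (to land at $\leq\epsilon$). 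Quasi-local equality does not yield these, and your ``limiting argument'' cannot manufacture them: shrinking the neighborhood does not tighten the modulus, which is fixed once and for all in Definition~\ref{d:qlm}.

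The paper obtains exactly these inequalities by building $d'$ explicitly as a chain metric: one takes a shrinking $\{V_{\hat\phi}\}$ of $\{\im\hat\phi\}$, sets $\overline D(u,v)=\sup_{\hat\phi}D_{\hat\phi}(u,v)$, and defines $d'$ as the infimum of $\sum\overline D(z_{k-1},z_k)$ over admissible chains. A direct estimate (Claim~\ref{c:vimphi} in the paper) then gives $D_{\hat\phi}\leq d'$ on suitable sets, while $d'\leq\overline D$ holds by construction on admissible pairs. Two further ingredients you are missing: (i) the family $\widehat\Phi_0$ is chosen so that $\psi^{-1}\phi\in\mathcal{S}$ \emph{globally} for all $\phi,\psi\in\widehat\Phi_0$, which makes $\mathcal{T}=\{\psi_j s\phi_i^{-1}\}\cup\{\id_Y\}$ already a pseudo{\textasteriskcentered}group without passing to generated compositions; (ii) the domains of the maps $\phi_i\in\Phi_0$ are cut down by a diameter condition so that $\dom t\subset P_i$ automatically forces the inequality $D_{\hat\phi}\leq d'$ there---this is what lets the conclusion hold for \emph{every} $v\in\dom t$, not just those in a small neighborhood $W$ of $y$ where your local decomposition is valid.
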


\begin{proof}
	We begin by proving the following preliminary result.
	\begin{claim}\label{c:phizero}
		There is a subset $\widehat{\Phi}_0\subset \Phi$ such that
		\begin{enumerate}[(a)]
			\item $\dom \phi$ and $\im \phi$ are relatively compact for every $\phi\in\widehat{\Phi}_0$,
			\item \label{i:phizeros} the map $\psi^{-1}\phi$ belongs to $\mathcal{S}$ for every $\phi,\psi\in\widehat{\Phi}_0$, and
			\item \label{i:phizerox}	$\{\im\phi\mid \phi\in\widehat{\Phi}_0\}$ is a locally finite open covering of $Y$.
		\end{enumerate} 
	\end{claim}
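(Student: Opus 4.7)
The first observation is that $Y$ inherits local compactness from $X$ through the equivalence $\Phi$: each $y\in Y$ lies in some $\im\phi$ and, via $\phi^{-1}$, obtains a relatively compact neighborhood from $X$. Combined with the Polish assumption, $Y$ is metrizable, Lindel\"of, and paracompact. The strategy I would follow is to first extract a locally finite countable cover of $Y$ by images of relatively compact elements of $\Phi$, ensuring (a) and (c), and then shrink further so that all pairwise compositions lie in $\mathcal{S}$ rather than only in $\mathcal{G}$, ensuring (b).

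\textbf{First shrinking (local finiteness).} Using Lindel\"ofness and Def.~\ref{d:equiv}(\ref{i:equivcover})--(\ref{i:equivrest}), I would choose a countable family $\{\phi_n\}_{n\in\mathbb{N}}\subset\Phi$ with images covering $Y$, each $\dom\phi_n$ and $\im\phi_n$ relatively compact. Paracompactness and the shrinking lemma then give a locally finite open refinement $\{W_n\}$ of $\{\im\phi_n\}$ with $\overline{W_n}\subset\im\phi_n$; replacing each $\phi_n$ by its restriction to $\phi_n^{-1}(W_n)$ stays inside $\Phi$ and establishes (a) and (c). At this point each $\phi_j^{-1}\phi_i$ lies in $\mathcal{G}$ by Def.~\ref{d:equiv}(\ref{i:equivgamma}), but not necessarily in $\mathcal{S}$.

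\textbf{Second shrinking (compatibility with $\mathcal{S}$).} For each $x\in\dom\phi_n$, local finiteness yields a finite set $J_x=\{m:\phi_n(x)\in\overline{W_m}\}$ and an open neighborhood $N_x\subset\dom\phi_n$ whose $\phi_n$-image avoids $\overline{W_m}$ for $m\notin J_x$. Since $\mathcal{S}$ is a pseudo{\textasteriskcentered}group generating $\mathcal{G}$, Lemma~\ref{l:generate} supplies, for each $m\in J_x$ at which $\phi_m^{-1}\phi_n$ is defined, a neighborhood $U_{x,m}$ of $x$ with $(\phi_m^{-1}\phi_n)|_{U_{x,m}}\in\mathcal{S}^*=\mathcal{S}$. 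I would then take $U_x$ to be the intersection of $N_x$ with these finitely many $U_{x,m}$, obtaining a neighborhood of $x$ with the key property that $(\phi_m^{-1}\phi_n)|_{U_x}\in\mathcal{S}$ whenever $\phi_n(U_x)\cap W_m\neq\emptyset$. Lindel\"ofness would then extract countably many $x_k^{(n)}$ whose $U_{x_k^{(n)}}$ cover $\dom\phi_n$, and a final paracompactness step would refine the resulting countable cover of $Y$ to a locally finite one, yielding $\widehat{\Phi}_0$.

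\textbf{Main obstacle.} The main obstruction is that $\mathcal{S}$, being only a pseudo{\textasteriskcentered}group, is closed under restriction but not combination, so a $\mathcal{G}$-element that is only locally in $\mathcal{S}$ need not be globally in $\mathcal{S}$. Local finiteness is precisely what converts finitely many point-wise statements at each $x$ into a single restriction living in $\mathcal{S}$ on $U_x$, bypassing the combination step. With this in hand, property (b) is easy to verify: any $\phi,\psi\in\widehat{\Phi}_0$ are restrictions of some $\phi_n,\phi_m$ to subsets of $U_x,U_{x'}$; if $\dom(\psi^{-1}\phi)$ is non-empty then $\phi_n(U_x)$ meets $W_m$, so $(\phi_m^{-1}\phi_n)|_{U_x}\in\mathcal{S}$, and $\psi^{-1}\phi$ is its restriction to a smaller domain, which remains in $\mathcal{S}$ by closure under restrictions.
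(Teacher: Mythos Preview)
Your overall strategy is sound and close in spirit to the paper's, but there is a small genuine gap in the second shrinking. You define $J_x=\{m:\phi_n(x)\in\overline{W_m}\}$ and then invoke Lemma~\ref{l:generate} only for those $m\in J_x$ ``at which $\phi_m^{-1}\phi_n$ is defined'', i.e., for $\phi_n(x)\in W_m$. For $m$ with $\phi_n(x)\in\partial W_m$ you impose no constraint, yet such $m$ can still satisfy $\phi_n(U_x)\cap W_m\neq\emptyset$, so your claimed ``key property'' is not fully established and the verification of (b) breaks down there. The fix is exactly what the paper does: keep around an extension $\tilde\phi_m\in\Phi$ with $\overline{W_m}\subset\im\tilde\phi_m$ from the first shrinking, so that $\tilde\phi_m^{-1}\phi_n$ is genuinely defined at $x$ whenever $\phi_n(x)\in\overline{W_m}$; Lemma~\ref{l:generate} then applies, and restricting back to the shrunk $\phi_m$ stays in~$\mathcal{S}$.

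Apart from this, your argument differs from the paper's mainly in organization. You work pointwise: at each $x$ you use local finiteness to isolate finitely many relevant indices, shrink to a common $U_x$, then extract a countable subcover and refine once more for local finiteness. The paper instead proceeds inductively on the index~$n$: starting from a locally finite sequence $\phi_n$ (with extensions $\tilde\phi_n$), at stage $n$ it chops $\tilde\phi_n$ into finitely many pieces over $\overline{\dom\phi_n}$, each compatible via Lemma~\ref{l:generate} with the \emph{finitely many} maps already chosen, and then takes $\widehat\Phi_0=\bigcup_n\widehat\Phi_{0,n}$. The inductive scheme makes local finiteness of $\widehat\Phi_0$ automatic (each $n$ contributes finitely many images inside the locally finite $\{\im\tilde\phi_n\}$), whereas your approach needs the extra paracompactness refinement at the end. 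Both routes ultimately rely on the same ingredients (compactness plus Lemma~\ref{l:generate} to force $\psi^{-1}\phi\in\mathcal{S}$), and with the boundary fix your argument goes through.
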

	
	First note that, since $Y$ is a locally compact and separable metric space and $\Phi$ is an equivalence, we can find a sequence, $\phi_1, \phi_2, \ldots$, in $\Phi$ such that
	\begin{itemize}
		\item every $\phi_n$ has an extension $\tilde\phi_n\in\Phi$ with $\overline{\dom\phi_n}\subset\dom\tilde\phi_n$,
		\item  $\dom \tilde \phi_n$ and $\im\tilde \phi_n$ are relatively compact for every $n\geq 1$, and
		\item  $\{\im \phi_n\mid n\geq 1\}$ and $\{\im \tilde \phi_n\mid n\geq 1\}$ are locally finite open coverings of $Y$.
	\end{itemize} 
	
	We  define now by induction on $n$ an increasing sequence of finite subsets $\widehat{\Phi}_{0,n}\subset\Phi$ ($n\geq 1$)
	 so that $\psi^{-1}\phi$ belongs to $\mathcal{S}$ for all $\phi,\psi\in\widehat{\Phi}_{0,n}$ and
	\[
	\im\phi_1\cup\cdots\cup\im\phi_n\subset\bigcup_{\phi\in\widehat{\Phi}_{0,n}}\im\phi.
	\]
	Let $\widehat\Phi_{0,1}=\{\phi_1\}$ and, for $n>1$, assume that we have defined $\widehat\Phi_{0,n-1}$ satisfying the required properties. Lemma~\ref{l:generate} yields a finite open covering $\{U_i\}_{i\in I}$ of $\overline{\dom\phi_n}$ such that every $U_i$ is relatively compact and the restriction of  $\psi^{-1}\tilde\phi_n$ to every $U_i$ belongs to $\mathcal{S}$ for every $\psi\in\widehat\Phi_{0,n-1}$. Letting
	\[
	\widehat\Phi_{0,n}=\widehat\Phi_{0,n-1}\cup\{\,\tilde{\phi}_{n|U_i}\mid i\in I\,\},
	\]
	we get
	\[
	\im\phi_n\subset\bigcup_{i\in I} \tilde\phi_n(U_i)=	\bigcup_{i\in I}\im(\tilde{\phi}_{n}|_{U_i}).
	\]
	The induction hypothesis now yields
	\[
	\im\phi_1\cup\cdots\cup\im\phi_n\subset\bigcup_{\phi\in\widehat\Phi_{0,n}}\im\phi.
	\] 
	
	Let us show by cases that $\psi^{-1}\phi$ belongs to $\mathcal{S}$ for all $\phi, \psi\in\widehat\Phi_{0,n}$.
	If $\phi, \psi\in\widehat\Phi_{0,n-1}$, then it follows from the induction hypothesis, so assume first that $\phi=\tilde\phi_{n|U_i}$ for some $i\in I$. If $\psi$ is also of the form $\tilde\phi_{n|U_j}$ for some $j\in I$, then $\psi^{-1}\phi$ is the identity on its domain, so $\psi^{-1}\phi\in \mathcal{S}$. 
	If, on the other hand, $\psi\in\widehat\Phi_{0,n-1}$, then  $\psi^{-1}\phi=\psi^{-1}\tilde\phi_{n|U_i}\in \mathcal{S}$ by the definition of $U_i$. The only case remaining is when $\phi\in\widehat\Phi_{0,n-1}$ and $\psi=\tilde\phi_{n|U_i}$ for some $i\in I$, which follows from the previous argument because $\mathcal{S}$ is symmetric. This completes the proof of Claim~\ref{c:phizero} by taking $\widehat\Phi_0=\bigcup_n\widehat\Phi_{0,n}$.

	We turn to the task of defining $\Phi_0$ and the generating pseudo{\textasteriskcentered}group $\mathcal T$. 
	Let $\{V_{\hat\phi}\mid\hat\phi\in\widehat\Phi_0\}$ be a shrinking of the covering $\{\im\hat\phi\mid\hat\phi\in\widehat\Phi_0\}$; that is, 
	$\bigcup_{\hat\phi\in\Phi_0} V_{\hat\phi}=Y$ and
	$V_{\hat\phi}$ is an open set satisfying $\overline{V_{\hat\phi}}\subset \im \hat\phi$
	for every $\hat\phi\in\widehat\Phi_0$. 
	\begin{claim}\label{c:p}
		For every $x\in X$, there is a open neighborhood $U_x$ of $x$ such that
		\[
		U_x\cap V_{\hat\phi}\neq\emptyset\quad\Longrightarrow\quad U_x\subset \im\hat\phi
		\]
		for every $\hat\phi\in\widehat\Phi_0$.
	\end{claim}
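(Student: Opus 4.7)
The claim is about points in $Y$ (the $x\in X$ notation appears to be a slip, since $V_{\hat\phi}$ and $\im\hat\phi$ both live in $Y$). Granting that interpretation, my plan is to use the local finiteness of $\{\im\hat\phi\mid\hat\phi\in\widehat\Phi_0\}$ together with the shrinking condition $\overline{V_{\hat\phi}}\subset\im\hat\phi$ to reduce an \emph{a priori} infinite family of conditions to a finite bookkeeping check at the chosen point $y\in Y$.

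First, local finiteness from Claim~\ref{c:phizero}(\ref{i:phizerox}) yields an open neighborhood $W$ of $y$ meeting only finitely many sets $\im\hat\phi_1,\ldots,\im\hat\phi_k$. For every other $\hat\phi\in\widehat\Phi_0$ we have $V_{\hat\phi}\subset\im\hat\phi$, so $W\cap V_{\hat\phi}=\emptyset$ and the required implication is vacuous for any subneighborhood of $W$; this reduces the problem to arranging the implication for the finitely many indices $i=1,\ldots,k$.

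I then split these remaining indices into two cases according to whether $y$ lies in $\overline{V_{\hat\phi_i}}$ or not. If $y\notin\overline{V_{\hat\phi_i}}$, I pick a neighborhood $W_i$ of $y$ disjoint from the closed set $\overline{V_{\hat\phi_i}}$, which kills the hypothesis of the implication on any subneighborhood. If instead $y\in\overline{V_{\hat\phi_i}}$, the shrinking condition forces $y\in\im\hat\phi_i$, and since $\im\hat\phi_i$ is open I may take $W_i$ to be a neighborhood of $y$ contained in $\im\hat\phi_i$, making the conclusion automatic on any subneighborhood. Setting $U_x:=W\cap W_1\cap\cdots\cap W_k$ finishes the argument.

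I do not expect a substantive obstacle here: the real work went into the construction of $\widehat\Phi_0$ and the existence of a shrinking, both of which I am assuming. The only conceptual point worth flagging is that the shrinking condition $\overline{V_{\hat\phi}}\subset\im\hat\phi$ is precisely what is needed to handle the boundary case $y\in\overline{V_{\hat\phi_i}}\setminus V_{\hat\phi_i}$ cleanly: it allows the implication to be satisfied by containment in $\im\hat\phi_i$ rather than by disjointness from $V_{\hat\phi_i}$, which in general cannot be arranged at boundary points.
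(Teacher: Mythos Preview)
Your proof is correct and follows essentially the same approach as the paper. The paper simply packages your case split into a single explicit formula,
\[
U_x=\Big(\bigcap_{\hat\phi\in\widehat\Phi_0,\ x\in\im\hat\phi}\im\hat\phi\Big)\setminus\Big(\bigcup_{\hat\phi\in\widehat\Phi_0,\  x\notin\overline{V_{\hat\phi}}}\overline{V_{\hat\phi}}\Big),
\]
noting that local finiteness makes the first intersection finite and the second union closed; your observation about the $X$/$Y$ typo is also correct.
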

	Since $\{\im\hat\phi\mid\hat\phi\in\widehat\Phi_0\}$ is locally finite by Claim~\ref{c:phizero}(\ref{i:phizerox}), 
	\[
	U_x=\Big(\bigcap_{\hat\phi\in\widehat\Phi_0,\ x\in\im\hat\phi}\im\hat\phi\Big)\setminus\Big(\bigcup_{\hat\phi\in\widehat\Phi_0,\  x\notin\overline{V_{\hat\phi}}}\overline{V_{\hat\phi}}\Big)
	\] is an open set that 	satisfies the required properties, proving Claim~\ref{c:p}.

	
	For every $\hat\phi\in\widehat\Phi_0$, let $\{P_i\mid i\in I_{\hat\phi}\}$ be a locally finite open covering of  the open set $V_{\hat\phi}$ such that every $P_i$ satisfies
	\begin{align}\label{imv}
		P_i\cap V_{\hat\psi}\neq\emptyset\quad &\Longrightarrow\quad P_i\subset\im\hat\psi\qquad \forall \hat\psi\in\widehat\Phi_0,\\
		\label{diamhatphi}\diam \hat\phi^{-1}(P_i)&<d(\hat\phi^{-1}(P_i),\dom\hat\phi\setminus \hat\phi^{-1}(V_{\hat\phi})).
	\end{align} 
	From now on, given $\hat\phi\in \widehat\Phi_0$ and $i\in I_{\hat\phi}$, let $\phi_i$ be shorthand for $\hat\phi|_{P_{i}}$. Let
	\begin{align*}
		\Phi_0&=\{\,\phi_i\mid \hat\phi\in\widehat\Phi_0,\ i\in I_{\hat \phi}\,\},\\
    		\mathcal T&=\{\, \psi_j s \phi_i^{-1}\mid \phi_i,\psi_j\in\Phi_0,\ s\in \mathcal{S}\,\}\cup\{\id|_U : U \text{ open in Y}\}.
	\end{align*}
	It is elementary to check that $\mathcal{T}$ is a pseudo{\textasteriskcentered}group, so let us prove that $\mathcal{T}$ generates $\mathcal{H}$. Let $h\in \mathcal{H}$ and let $x\in\dom h$. By the definitions of $\{V_{\hat\phi}\mid \phi\in\widehat\Phi_0\}$ and $\{P_i\mid i\in I_{\hat\phi}\}$, the collection 
	\[
	\{\,P_i\mid i\in I_{\hat\phi},\ \phi\in\widehat{\Phi}_0\,\}=\{\,\im \phi_i\mid i\in I_{\hat\phi},\ \hat\phi\in\widehat{\Phi}_0\,\}
	\]
	is an open covering of $Y$. Therefore, there are $\phi_i$, $\psi_j\in\Phi_0$ so that $x\in\im\phi_i$, $f(x)\in\im\psi_j$. Thus $\psi^{-1}_jh\phi_i\in\mathcal{G}$ by Definition~\ref{d:equiv}(\ref{i:equivgamma}). Since $\mathcal{S}$ generates $\mathcal{G}$, there must be an open neighbourhood $U$ of $\phi^{-1}_ix$ so that the restriction of $\psi^{-1}_jh\phi_i$ to $U$ belongs to $\mathcal{S}$ by Lemma~\ref{l:generate}. Then $h$ coincides with $\psi_j s\phi_i^{-1}\in \mathcal{S}$ over $\phi_i(U)$. We have proved that, for every $h\in \mathcal{H}$ and $x\in\dom h$, the restriction of $h$ to some open neighbourhood of $x$ belongs to $\mathcal{T}$, so $\mathcal{T}$ generates $\mathcal{H}$ by Lemma~\ref{l:generate}.

	We now prove some preliminary results needed to define the metric $d'$. For each $\hat\phi\in\widehat{\Phi}_0$, let $D_{\hat\phi}\colon \im\hat\phi\times\im\hat\phi\to \mathbb{R}_{\geq 0}$ be the  metric defined on the open set $\im\hat\phi$ by  $D_{\hat\phi}(x,y)=d(\hat\phi^{-1}x,\hat\phi^{-1}y)$.
	If $u,v\in\im\hat\phi$ for some $\hat\phi\in\widehat{\Phi}_0$, let 
	\begin{equation*}
	\overline D(u,v)=\sup \{\,D_{\hat\phi}(u,v)\mid \hat\phi\in \widehat{\Phi}_0,\ u,v\in\im\hat\phi\,\}.
	\end{equation*}
	A pair $(u,v)\in Y\times Y$ is \emph{admissible} if there is $\hat\phi\in\widehat{\Phi}_0$ such that $u,v\in V_{\hat\phi}$ and
	\[
	\{u,v\}\cap V_{\hat\psi}\neq\emptyset\quad\Longrightarrow\quad\{u,v\}\subset \im\hat\psi\qquad \forall\hat\psi\in\widehat{\Phi}_0.
	\]
	Let $S_{u,v}$ be the set of sequences $(z_0,\ldots,z_n)$ of arbitrary finite length with $z_0=u$, $z_n=v$, and such that $(z_{i-1},z_{i})$ is an admissible pair for every $i=1,\ldots,n$. The following properties are elementary:
	\begin{align}\label{suvref}
		(u,u)&\in S_{u,u},\\
		(z_0,\ldots,z_n)\in S_{u,v}&\Longrightarrow (z_n,\ldots,z_0)\in S_{v,u},\label{suvsym}\\
		\label{suvtrans}
		\begin{rcases*}
			(z_0,\ldots,z_m)\in S_{u,v}\\ 
			(z_m,\ldots,z_{m+n})\in S_{v,w}
		\end{rcases*} &\Longrightarrow (z_0,\ldots,z_{m+n})\in S_{u,w}.
	\end{align}
	Set  
	\begin{equation}\label{definitiondprime}
		d'(u,v)=\begin{cases}\infty \qquad &\text{if}\quad S_{u,v}=\emptyset,\\ \inf_{(z_0,\ldots,z_n)\in S_{u,v}}\sum_{k=1}^n\overline{D}(z_{k-1},z_k) \qquad &\text{if}\quad S_{u,v}\neq\emptyset.\end{cases}
	\end{equation} It follows from~\eqref{suvref}--\eqref{suvtrans} that $d'$ is a pseudometric in $Y$. To prove that it is actually a metric, we need the following result.
	
	\begin{claim}\label{c:vimphi}
		Let $\hat\phi\in\widehat\Phi_0$, let $u\in V_{\hat\phi}$, and let $v\in Y$ be such that $S_{u,v}\neq\emptyset$. Then
		\[
		d'(u,v)\geq\begin{cases}
			\min\{D_{\hat\phi}(u,v), D_{\hat\phi}(u,\im\hat\phi\setminus V_{\hat\phi})\} &\text{if}\ v\in V_{\hat\phi},\\
			D_{\hat\phi}(u,\im\hat\phi\setminus V_{\hat\phi}) &\text{if}\ v\notin V_{\hat\phi}.
		\end{cases}
		\]
	\end{claim}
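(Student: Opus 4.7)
The plan is to take an arbitrary sequence $(z_0,\ldots,z_n)\in S_{u,v}$ and bound its $\overline D$-length from below by the desired quantity; the bound on $d'$ will then follow by taking the infimum in~\eqref{definitiondprime}. The key observation is that $D_{\hat\phi}$ is a genuine metric on $\im\hat\phi$, since $\hat\phi^{-1}$ is a homeomorphism, so once we know that a segment of the sequence lies entirely in $\im\hat\phi$ we may replace $\overline D$ by $D_{\hat\phi}$ on it and apply the triangle inequality.

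Concretely, I would let $m$ be the least index with $z_m\notin V_{\hat\phi}$, with the convention $m=n+1$ if every $z_k$ lies in $V_{\hat\phi}$. The main point is that $z_{k-1},z_k\in\im\hat\phi$ for every $1\le k\le\min(m,n)$: when $k<m$ both endpoints already lie in $V_{\hat\phi}\subset\im\hat\phi$, while when $k=m\le n$ the admissibility of $(z_{m-1},z_m)$ combined with $z_{m-1}\in V_{\hat\phi}$ lets us take $\hat\psi=\hat\phi$ in the implication defining admissibility, forcing $z_m\in\im\hat\phi$. Hence $\overline D(z_{k-1},z_k)\ge D_{\hat\phi}(z_{k-1},z_k)$ for such $k$, and the triangle inequality for $D_{\hat\phi}$ yields
$$
\sum_{k=1}^n\overline D(z_{k-1},z_k)\;\ge\;\sum_{k=1}^{\min(m,n)}D_{\hat\phi}(z_{k-1},z_k)\;\ge\;D_{\hat\phi}(u,z_{\min(m,n)}).
$$

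To conclude, I would split on whether the sequence ever escapes $V_{\hat\phi}$. If $m\le n$, then $z_m\in\im\hat\phi\setminus V_{\hat\phi}$, so the right-hand side is at least $D_{\hat\phi}(u,\im\hat\phi\setminus V_{\hat\phi})$; if $m=n+1$, then $v=z_n\in V_{\hat\phi}$ and the right-hand side equals $D_{\hat\phi}(u,v)$. Taking the infimum over admissible sequences gives the stated minimum when $v\in V_{\hat\phi}$, while the case $v\notin V_{\hat\phi}$ forces $m\le n$ and hence the sharper bound. The only degenerate situation is $m=0$, which happens precisely when $u\notin V_{\hat\phi}$; then the minimum in the claim collapses to $0$ and the inequality is vacuous. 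I do not foresee any substantial obstacle here; note in particular that condition~\eqref{diamhatphi} plays no role in this claim and is presumably reserved for showing that the pseudometric $d'$ induces the correct topology on $Y$.
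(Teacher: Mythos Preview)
Your proof is correct and follows essentially the same approach as the paper: pick an arbitrary admissible sequence, locate the first index where it exits $V_{\hat\phi}$, use admissibility of the pair straddling the exit to keep that next point inside $\im\hat\phi$, and then apply the triangle inequality for $D_{\hat\phi}$. Your handling of the degenerate case $m=0$ (i.e.\ $u\notin V_{\hat\phi}$) is a helpful addition that the paper leaves implicit, and your parenthetical remark that \eqref{diamhatphi} is not used here is accurate.
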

	Let $(z_0,\ldots,z_n)\in S_{u,v}$. Suppose first that  $\{z_{i-1},z_i\}\subset V_{\hat\phi}$ for every $i=1,\ldots,n$, then
	\[
	\sum_{k=1}^n\overline{D}(z_{k-1},z_k)\geq \sum_{k=1}^n D_{\hat\phi}(z_{k-1},z_k)\geq D_{\hat\phi}(z_0,z_n)=D_{\hat\phi}(u,v)
	\]
	by the triangle inequality. Assume now that  $m$ is the first index in  $\{1,\ldots n\}$ satisfying $z_{m}\notin V_{\hat \phi}$. Since $(z_{m-1},z_{m})$ is an admissible pair and $z_{m-1}\in \im\phi_i\subset V_{\hat\phi}$, we get $z_{m}\in \im\hat\phi$. Therefore
	\[
	\sum_{k=1}^n\overline{D}(z_{k-1},z_k)\geq \sum_{k=1}^{m} D_{\hat\phi}(z_{k-1},z_k)\geq D_{\hat\phi}(z_0,z_{m})\geq D_{\hat\phi}(u,\im\hat\phi\setminus V_{\hat\phi});
	\]
	this completes the proof of Claim~\ref{c:vimphi}.
	
	\begin{claim}\label{c:compatible}
		$d'$ is a compatible metric on $Y$.
	\end{claim}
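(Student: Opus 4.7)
The plan is to verify two things: first, that $d'$ is positive definite (the triangle inequality and symmetry follow from \eqref{suvref}--\eqref{suvtrans} and \eqref{definitiondprime}, as already noted), and second, that the topology generated by the open $d'$-balls agrees with the ambient topology of $Y$. The key tool in both parts is Claim~\ref{c:vimphi}, which controls $d'$ from below in terms of the local metrics $D_{\hat\phi}$, together with Claim~\ref{c:p} and the local finiteness in Claim~\ref{c:phizero}(\ref{i:phizerox}), which will provide the upper bound via admissible pairs.

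For positive definiteness, given $u\neq v$ we pick $\hat\phi\in\widehat\Phi_0$ with $u\in V_{\hat\phi}$ (such $\hat\phi$ exists because the family $\{V_{\hat\phi}\}$ covers $Y$). Because $\overline{V_{\hat\phi}}\subset\im\hat\phi$ and $D_{\hat\phi}$ is a compatible metric on the open set $\im\hat\phi$, the number $r_0:=D_{\hat\phi}(u,\im\hat\phi\setminus V_{\hat\phi})$ is strictly positive. If $v\in V_{\hat\phi}$, Claim~\ref{c:vimphi} gives $d'(u,v)\geq\min\{D_{\hat\phi}(u,v),r_0\}>0$; if $v\notin V_{\hat\phi}$, it gives $d'(u,v)\geq r_0>0$. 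In either case $d'(u,v)>0$, so $d'$ is a metric (possibly taking the value $\infty$, which is allowed in our conventions).

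For compatibility we prove the two inclusions of open sets. The implication ``$d'$-small $\Rightarrow$ topologically small'' uses the same $\hat\phi$ and $r_0$ as above: for any $\delta<r_0$, Claim~\ref{c:vimphi} forces every $v$ with $d'(u,v)<\delta$ to lie in $V_{\hat\phi}$ and to satisfy $D_{\hat\phi}(u,v)<\delta$, and since $D_{\hat\phi}$ is a compatible metric on $\im\hat\phi$, such $v$ can be made to lie in any prescribed neighborhood of $u$ by choosing $\delta$ small. For the reverse direction, we first shrink to a neighborhood $W\subset U_u\cap V_{\hat\phi}$ of $u$, where $U_u$ is the set from Claim~\ref{c:p}. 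For $v\in W$ the pair $(u,v)$ is admissible: both points lie in $V_{\hat\phi}$, and whenever $\{u,v\}$ meets some $V_{\hat\psi}$, Claim~\ref{c:p} forces $U_u\subset\im\hat\psi$, hence $\{u,v\}\subset\im\hat\psi$. Consequently $d'(u,v)\leq\overline D(u,v)$.

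The main obstacle is therefore to show that $\overline D(u,v)\to 0$ as $v\to u$ in $Y$, which is the only step where one might worry that the supremum in the definition of $\overline D$ blows up. The resolution is the local finiteness in Claim~\ref{c:phizero}(\ref{i:phizerox}): by shrinking $W$ further we may assume that $W$ meets only finitely many of the sets $\im\hat\phi'$, so that $\overline D(u,v)=\max_{\hat\phi'} D_{\hat\phi'}(u,v)$ is a maximum over a fixed finite family of compatible metrics at $u$. Each term tends to $0$ as $v\to u$ in $Y$, so $\overline D(u,v)$, and hence $d'(u,v)$, does too. This completes the verification that the $d'$-topology coincides with the original topology of $Y$, proving Claim~\ref{c:compatible}.
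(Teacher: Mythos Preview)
Your proof is correct and follows essentially the same approach as the paper: use Claim~\ref{c:vimphi} for the lower bound on $d'$ (yielding positive definiteness and one direction of compatibility), and use admissibility of $(u,v)$ together with local finiteness of $\{\im\hat\phi\}$ for the upper bound via $\overline D$. If anything, your treatment of the ``topologically small $\Rightarrow$ $d'$-small'' direction is slightly more careful than the paper's, since you explicitly invoke Claim~\ref{c:p} to verify the admissibility condition and use local finiteness of $\{\im\hat\phi\}$ (rather than of $\{V_{\hat\phi}\}$) to reduce $\overline D$ to a finite maximum.
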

	Let us prove that $d'$ is a metric: Let $u,v\in Y$ be such that $d'(u,v)=0$, so $S_{u,v}\neq\emptyset$. Take any $\hat\phi\in\widehat{\Phi}_0$ such that $u\in V_{\hat\phi}$. Since 
	\[
	D_{\hat\phi}(u,\im\hat\phi\setminus V_{\hat\phi})>0, 
	\]
	it follows from Claim~\ref{c:vimphi} that $v\in V_{\hat\phi}$ and $D_{\hat\phi}(u,v)\leq d'(u,v)=0$. But $D_{\hat\phi}$ is a metric on $\im\hat\phi$, so $u=v$ as desired.
	
	Let us show that $d'$ is a compatible metric. We start by showing that every neighborhood in $X$ contains a $d'$-ball, so let $U$ be a neighborhood of $x$. Since $\{\im\hat\phi\mid\hat\phi\in\widehat{\Phi}_0\}$ is a locally finite cover, we may assume 
	\[
	\{\hat\phi\in\widehat{\Phi}_0\mid x\in V_{\hat\phi}\}=\{\hat\phi_1,\ldots,\hat\phi_n\}
	\] 
	for some $n\in\mathbb{N}$.
	The metrics $D_{\hat\phi_i}$ are compatible over $\im\hat\phi_i$, so we can find some $r>0$ satisfying 
	\begin{align*}
		B_{D_{\hat\phi_i}}(x,r)\subset U,\qquad
		d(B_{D_{\hat\phi_i}}(x,r),\im\hat\phi_i\setminus V_{\hat\phi_i})>r
	\end{align*}
	for every $i=1,\ldots,n$; then, for every $y\in B_{d'}(x,r)$,
	\[
	r>d'(x,y)\geq D_{\hat\phi_i}(x,y)
	\]
	 by Claim~\ref{c:vimphi}, so $y\in B_{D_{\hat\phi_i}}(x,r)$ and hence $y\in U$.
	
	Consider now a ball $B_{d'}(x,r)$.  Choose a neighborhood $U$ of $x$ small enough so that 
	\begin{align*}
	U\subset V_{\hat\phi_i},\qquad
	U\subset B_{D_{\hat\phi_i}}(x,r)
	\end{align*}
	for $i=1,\ldots,n$.
	This means that $(x,u)\in S_{x,u}$ for every $u\in U$, so 
	\[
	d'(x,u)\leq\overline{D}(x,u)=\sup_i D_{\hat\phi_i}(x,u)<r.
	\]
	Hence $U\subset B_{d'}(x,r)$, proving Claim~\ref{c:compatible}.

	Let  $\epsilon,\delta>0$,  $x\in X$ and $y\in \Phi(x)$ be as in the statement of the theorem. 
	By definition of $\mathcal{T}$, every map that is not the identity is of the form $\psi_j s\phi^{-1}_i$, where $s\in \mathcal{S}$ and $\psi_j,\phi_i\in\Phi_0$ for some $\hat\psi,\hat\chi\in\widehat{\Phi}_0$. Recall that the notation $\psi_j$ means that $\psi_j$ is of the form $\hat\psi|_{P_j}$, where $\{P_j\mid j\in I_{\hat\psi}\}$ is a locally finite covering of $V_{\hat\psi}$. Since the covering $\{V_{\hat\chi}\mid \hat\chi\in\widehat{\Phi}_0\}$ was also locally finite, there are only finitely many points in $X$ that are sent to $y$ by a map in $\Phi_0$; denote them by $x_1,\ldots, x_n$. Moreover, since $\Phi$ is an equivalence, all these points lie on the same orbit, so we can find a uniform  $\delta_y$ satisfying
 	\begin{equation}\label{eq:deltay}
	d(x_l,u)<\delta_y\quad\Longrightarrow\quad d(sx_l,su)<\epsilon
	\end{equation}
	for every $l=1,\ldots,n$, $u\in X$ and $s\in \mathcal{S}$ with $x_l,u\in\dom s$. 
 
  Let $t\in \mathcal \mathcal{T}$ satisfy $y\in \dom t$, and let $v\in \dom t\cap B_{d'}(y,\delta_y)$. Let $t=\psi_j s\phi^{-1}_i$, then, in particular, \eqref{eq:deltay} is also satisfied for $x_l=\phi_i^{-1}(y)$, which belongs to the set $x_1,\ldots,x_n$.

 Now $y,v\in\dom t$ implies $y,v\in \im\phi_i=P_i\subset V_{\hat\phi}$. Hence $\{y,v\}\in S_{y,v}$ by~\eqref{imv} and 
	\[
	D_{\hat\phi}(y,v)<D_{\hat\phi}(y,\im\hat\phi\setminus V_{\hat\phi})
	\] 
	by~\eqref{diamhatphi}, so Claim~\ref{c:vimphi} yields 
	\[D_{\hat\phi}(y,v)\leq d'(y,v)< \delta_y, \] 
	and therefore 
	\[
	d(x_l,u)<\delta_y
	\]
	by the definition of $D_{\hat\phi}$, where $u=\phi_i^{-1}v$. 
	
	Let  $\chi_k\in \Phi_0$  be such that $ty,tv\in \im  \chi_k$. Then 
	\[
	d(\chi_k^{-1}ty,\chi_k^{-1}tv)=d((\chi_k^{-1}\psi_j)sx_l,(\chi_k^{-1}\psi_j)su).
	\]
	Since $\chi_k^{-1}\psi_j\in \mathcal{S}$ by Claim~\ref{c:phizero}(\ref{i:phizeros}) and $d(x_l,u)<\delta_y$, we have
	\[
	D_{\hat\chi}(ty,tv)=d(\chi_k^{-1}ty,\chi_k^{-1}tv)<\epsilon.
	\]

    The covering $\{\im \hat \eta\mid \hat\eta\in \widehat{\Phi}\}$ is locally finite, so again there are only finitely many maps such that $ty$ and $tv$ are contained in their image. In particular, this implies that $\overline{D}(ty,tv)< \epsilon$ since we are taking the maximum of a finite set of values smaller than $\epsilon$.
	Both $sx$ and $su$ belong to $\dom\psi_j\subset V_{\hat\psi}$, so~\eqref{definitiondprime} yields
	\[
	d'(ty,tv)\leq\overline D(ty,tv)<\epsilon.\qedhere
	\] 
\end{proof}

\begin{corollary}\label{c:sensitive}
	Being sensitive to initial conditions is invariant by equivalences of pseudogroups acting on locally compact spaces.
\end{corollary}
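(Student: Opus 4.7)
The strategy is to derive the result from Proposition~\ref{p:invariant} by applying it to the inverse equivalence $\Phi^{-1}$. Suppose $\mathcal{G}\acts X$ is sensitive to initial conditions and $\Phi\colon(X,\mathcal{G})\to(Y,\mathcal{H})$ is an equivalence; I want to show $\mathcal{H}\acts Y$ is sensitive. Fix an arbitrary compatible metric $d'$ on $Y$ and a generating pseudo{\textasteriskcentered}group $\mathcal{T}$ for $\mathcal{H}$. Applying Proposition~\ref{p:invariant} to the inverse equivalence $\Phi^{-1}\colon(Y,\mathcal{H})\to(X,\mathcal{G})$ with input $(d',\mathcal{T})$ produces a compatible metric $d$ on $X$, a generating pseudo{\textasteriskcentered}group $\mathcal{S}$ for $\mathcal{G}$, and a generating set $\Psi\subset\Phi^{-1}$ enjoying the following propagation property: if $d'(y,v)<\delta\Rightarrow d'(ty,tv)<\epsilon$ holds at some $y\in Y$ for every admissible $t\in\mathcal{T}$ and $v$, then $d(x,u)<\delta\Rightarrow d(sx,su)\le\epsilon$ holds at every $x\in\Psi(y)$ for every admissible $s\in\mathcal{S}$ and $u$.

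The key observation is that the contrapositive of this propagation property transports sensitivity witnesses at any $x\in\Psi(y)$ back to sensitivity witnesses at $y$. Let $c>0$ be a sensitivity constant for $(d,\mathcal{S})$ supplied by the sensitivity of $\mathcal{G}$, and set $c'=c/2$. I claim $c'$ is a sensitivity constant for $(d',\mathcal{T})$: given any $y_0\in Y$ and $r>0$, pick $x_0\in\Psi(y_0)$; sensitivity of $\mathcal{G}$ at $x_0$ produces $s\in\mathcal{S}$ and $u\in B_d(x_0,r)\cap\dom s$ with $d(sx_0,su)\ge c>c'$. Applying the contrapositive of the propagation property with $\delta=r$ and $\epsilon=c'$ then yields $t\in\mathcal{T}$ and $v\in B_{d'}(y_0,r)\cap\dom t$ satisfying $d'(ty_0,tv)\ge c'$, which is exactly the required witness. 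Since $(d',\mathcal{T})$ was arbitrary, $\mathcal{H}$ is sensitive to initial conditions.

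The one delicate point is guaranteeing that $\Psi(y_0)\ne\emptyset$, i.e.\ that $y_0$ lies in $\dom\psi$ for some $\psi\in\Psi$. The construction in Proposition~\ref{p:invariant} directly ensures that $\{\im\psi\mid\psi\in\Psi\}$ covers the target of the equivalence (in our application, this is $X$), but we also need the domains to cover the source $Y$. This is achieved by enlarging the auxiliary sequence $\phi_1,\phi_2,\dots$ used in the proof of Proposition~\ref{p:invariant} so that both $\{\dom\phi_n\}$ and $\{\im\phi_n\}$ are locally finite open coverings of the corresponding spaces; this strengthening is possible because equivalences have coverings on both sides by Definition~\ref{d:equiv}(\ref{i:equivcover}) and both $X$ and $Y$ are locally compact and separable by hypothesis. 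Aside from this minor bookkeeping and the harmless $c\mapsto c/2$ shift needed to upgrade the $\le$ in Proposition~\ref{p:invariant} to the strict inequality required for sensitivity, the corollary is an immediate consequence of the proposition, so the main obstacle has already been handled in the proof of Proposition~\ref{p:invariant}.
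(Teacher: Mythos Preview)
Your approach is essentially the dual of the paper's: you prove the direct implication (sensitive $\Rightarrow$ sensitive) by applying Proposition~\ref{p:invariant} to $\Phi^{-1}$, whereas the paper proves the contrapositive (not sensitive $\Rightarrow$ not sensitive) by applying the proposition to $\Phi$ itself. The contrapositive of the propagation property is exactly what is needed in either direction, and your $c\mapsto c/2$ shift correctly handles the strict-versus-nonstrict inequality. So the core idea is sound and mirrors the paper's.

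The gap is in your handling of the coverage issue. You correctly identify that you need $\Psi(y_0)\ne\emptyset$ for \emph{every} $y_0\in Y$, but your proposed fix---enlarging the auxiliary sequence so that $\{\dom\phi_n\}$ also covers---does not survive the rest of the construction in Proposition~\ref{p:invariant}. After passing to $\widehat\Phi_0$ the domains would still cover, but the final family $\Phi_0$ consists of restrictions with image $P_i$, where the $P_i$ cover only the \emph{shrinkings} $V_{\hat\phi}\subset\im\hat\phi$; the union of the domains of $\Phi_0$ is therefore $\bigcup_{\hat\phi}\hat\phi^{-1}(V_{\hat\phi})$, and there is no reason this should cover the source even if the $\dom\hat\phi$ do. The paper encounters the mirror-image issue and resolves it not by modifying the construction but by using orbit-invariance: the points $x_\epsilon$ witnessing non-sensitivity are replaced by orbit-mates lying in $\bigcup_{\phi\in\Phi_0}\dom\phi$ (which meets every orbit because $\Phi_0$ generates $\Phi$), using that the ``equicontinuous at a point'' condition propagates along orbits via composition with local elements of the pseudo{\textasteriskcentered}group $\mathcal{S}$. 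The same device works in your direction: the ``sensitive at a point'' condition also propagates along orbits inside $\mathcal{T}$ (if $h\in\mathcal{H}$ sends $y'$ to $y_0$, restrict $h$ to a neighbourhood so that $h|_W\in\mathcal{T}$ and precompose witnesses at $y'$ with $h^{-1}|_{h(W)}$), so once you have sensitivity at every $y'\in\bigcup_{\psi\in\Psi}\dom\psi$, you get it at every $y_0\in Y$. Replacing your proposed modification of the proposition by this orbit-propagation step closes the gap.
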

\begin{proof}
	Suppose that $\mathcal{G}$ is not sensitive; then there is a metric $d$ and a generating pseudo{\textasteriskcentered}group  $\mathcal{S}$ such that, for every $\epsilon>0$, there are $x_\epsilon$ and $\delta_\epsilon$ with
	\[
	d(x_\epsilon,u)<\delta_\epsilon\quad\Longrightarrow\quad d(s(x_\epsilon),s(u))<\epsilon
	\]
	for all $u\in X$ and $s\in\mathcal{S}$ with $x_\epsilon,u\in\dom s$.
	
	Let $(Y,\mathcal{H})$ be a pseudogroup, and let $\Phi\colon (X,\mathcal{G})\to (Y,\mathcal{H})$ be an equivalence.  Proposition~\ref{p:invariant} yields a generating pseudo{\textasteriskcentered}group $\mathcal{T}$ for $\mathcal H$ and a metric $d'$ on $Y$. 
Letting $y_\epsilon\in\Phi(x_\epsilon)$, Proposition~\ref{p:invariant} yields
	\[
	d'(y_\epsilon,v)<\delta_{\epsilon,y_\epsilon}\quad\Longrightarrow\quad d'(t(y_\epsilon),t(v))<\epsilon
	\]
	for every $\epsilon>0$, $v\in Y$ and $t\in \mathcal{T}$ with $y_\epsilon,v\in \dom t$; this shows that $\mathcal{H}$ is not sensitive to initial conditions.
	
	We have shown that if $\mathcal{G}$ is not sensitive, then neither is $\mathcal{H}$; the result now follows from the symmetry of the equivalence relation for pseudogroups.
\end{proof}

\begin{corollary}\label{c:equicontequivalence}
	Let $\Phi\colon (X,\mathcal{G})\to (Y,\mathcal{H})$ be an equivalence of pseudogroups acting on locally compact spaces. If $x\in X$ is a point of $( \mathcal S,d)$-equicontinuity for $\mathcal{G}$, then, with the notation of Proposition~\ref{p:invariant}, every $y\in \Phi_0 x$ is a point of $(\mathcal T,d')$-equicontinuity for $\mathcal{H}$. In particular, $\mathcal{G}$ is almost equicontinuous if and only if $\mathcal{H}$ is.
\end{corollary}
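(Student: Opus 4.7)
The proof has two parts: a pointwise transfer of equicontinuity, and a density argument.

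For the pointwise part, the plan is to feed the modulus of equicontinuity at $x$ into Proposition~\ref{p:invariant}. If $\epsilon\mapsto\delta(\epsilon)$ is a modulus witnessing that $x$ is $(\mathcal{S},d)$-equicontinuous, then for each $\epsilon>0$ the hypotheses of Proposition~\ref{p:invariant} hold with the pair $(\epsilon/2,\delta(\epsilon/2))$ in place of $(\epsilon,\delta)$. The proposition's conclusion then yields, for every $y\in\Phi_0(x)$,
\[
d'(y,v)<\delta(\epsilon/2)\quad\Longrightarrow\quad d'(ty,tv)\leq\epsilon/2<\epsilon
\]
for all $t\in\mathcal{T}$ and $v$ with $y,v\in\dom t$. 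Hence $y$ is $(\mathcal{T},d')$-equicontinuous with modulus $\epsilon\mapsto\delta(\epsilon/2)$. The factor of $2$ is purely cosmetic, converting the non-strict inequality delivered by Proposition~\ref{p:invariant} into the strict inequality required by the definition of a point of equicontinuity.

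For the ``if and only if'' part, suppose $\mathcal{G}$ is almost equicontinuous and let $E_X$ be a dense set of $(\mathcal{S},d)$-equicontinuous points. Write $E_Y$ for the set of $(\mathcal{T},d')$-equicontinuous points in $Y$; the first part shows $\Phi_0(E_X)\subset E_Y$. Inspection of the construction of $\Phi_0$ in the proof of Proposition~\ref{p:invariant} reveals that $\bigcup_{\phi\in\Phi_0}\im\phi=Y$, because each $\im\phi_i=P_i$ forms part of an open cover of a shrinking $V_{\hat\phi}$, and the sets $V_{\hat\phi}$ themselves cover $Y$. Given any non-empty open $W\subset Y$, I would select $\phi\in\Phi_0$ with $W\cap\im\phi\neq\emptyset$; the preimage $\phi^{-1}(W)\subset\dom\phi$ is a non-empty open subset of $X$, so it meets $E_X$, and $\phi$ carries this intersection into $W\cap E_Y$. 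Thus $E_Y$ is dense in $Y$, so $\mathcal{H}$ is almost equicontinuous. The converse follows by applying the same argument to the inverse equivalence $\Phi^{-1}\colon(Y,\mathcal{H})\to(X,\mathcal{G})$ furnished by Lemma~\ref{l:equivproperties}.

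The only step that is not purely formal is verifying that $\bigcup_{\phi\in\Phi_0}\im\phi=Y$ from the construction in Proposition~\ref{p:invariant}. Should this equality turn out to hold only up to a cover meeting every $\mathcal{H}$-orbit (which is all that follows formally from $\Phi_0$ generating the equivalence $\Phi$), one can instead invoke the invariance of the set of equicontinuous points under the pseudogroup---established earlier in the subsection---to propagate density from $\bigcup_{\phi\in\Phi_0}\im\phi$ out to all of $Y$ by $\mathcal{H}$-translation. Either way, the density argument goes through.
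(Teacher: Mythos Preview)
Your proof is correct and follows essentially the same approach as the paper's: apply Proposition~\ref{p:invariant} pointwise to transfer equicontinuity, then use that $\{\im\phi\mid\phi\in\Phi_0\}$ covers $Y$ to push density through. You are more explicit than the paper about the $\epsilon/2$ trick needed to turn the non-strict inequality from Proposition~\ref{p:invariant} into a strict one, and about verifying the covering property of $\Phi_0$ (which does hold, since each $\im\phi_i=P_i$ and the $P_i$ cover $V_{\hat\phi}$ while the $V_{\hat\phi}$ cover $Y$); your fallback argument is therefore unnecessary but harmless.
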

\begin{proof}
	The first assertion follows immediately from Preposition~\ref{p:invariant} and the definition of equicontinuous point. Suppose now that the points of $(\mathcal S,d)$-equicontinuity are dense in $X$, then they are also dense in the open set $\bigcup_{\phi\in\Phi_0} \dom \phi$. Since $\Phi_0$ sends points of $(\mathcal S,d)$-equicontinuity to points of $(\mathcal T,d')$-equicontinuity and $\{\im\phi\mid\phi\in\Phi_0\}$ is an open covering of $Y$, the points of $(\mathcal T,d')$-equicontinuity are dense in $Y$.
\end{proof}

Corollaries~\ref{c:sensitive} and~\ref{c:equicontequivalence} together with Lemma~\ref{l:dppinvariant} yield Theorem~\ref{t:invariant}. 

We turn our attention to the Auslander--Yorke dichotomy for pseudogroups, which we will subsequently use to prove Theorem~\ref{t:ttdposicpseudo}.

\begin{proof}[Proof of Theorem~\ref{t:auslanderyorkepseudo}]
	Suppose that $\mathcal{G}$ is not sensitive; thus, there is a metric $d$ and a generating pseudo{\textasteriskcentered}group $\mathcal{S}$ so that, for every $n\in \mathbb{N}$, there are $x_n\in X$ and $\delta_n>0$ satisfying
	\begin{equation}\label{auslanderyorke}
		d(x_n,y)<\delta_n\quad\Longrightarrow\quad d(s(x_n),s(y))<1/n
	\end{equation} for every $y\in X$ and $s\in \mathcal{S}$ with $x_n,y\in\dom s$.
	
	Using Lemma~\ref{l:systemcg}, choose a system of compact generation $(U,F,\widetilde F)$ with $\widetilde F\subset \mathcal{S}$; note that any point in $\mathcal{G}x_n$ still satisfies~\eqref{auslanderyorke}, perhaps with a different $\delta_n$, so we may assume without loss of generality that the sequence $x_n$ is contained in $U$. We also have $1/n<\sigma(U,F,\widetilde{F})$ for $n$ large enough, and now Proposition~\ref{p:halo} yields the existence of a sequence $r_n>0$ such that
	\begin{equation}\label{auslanderyorkeftilde}
		\dom f\cap B(x_n,r_n)\neq \emptyset\quad\Longrightarrow\quad B(x_n,r_n)\subset \dom \tilde f
	\end{equation}
	for every $f\in \langle F\rangle$. We will suppose, by passing to a subsequence if necessary, that every $x_n$ satisfies~\eqref{auslanderyorkeftilde}; moreover,  we will also assume by decreasing $r_n$ that $B(x_n,r_n)\subset U$ and $r_n<\delta_n$.
	
	Note that
	\begin{equation}\label{auslanderyorkediam} 
		\diam f(B(x_n,r_n))<2/n
	\end{equation} for every $f\in F$. Indeed, otherwise there would be some $f\in F$ with 
	\begin{equation*}
	B(x_n,r_n)\subset \dom \tilde f,\qquad\diam \tilde f(B(x_n,r_n))\geq 2/n
	\end{equation*}
	by~\eqref{auslanderyorkeftilde}.
	But then the triangle inequality would yield $d(\tilde f(x_n),\tilde f(y))\geq 1/n$ for some $y\in B(x_n,r_n)\subset B(x_n,\delta_n)$, contradicting~\eqref{auslanderyorke}.
	
	Let  
	\[
	V_n=\bigcup_{f\in\langle F\rangle} f(B(x_n,r_n))\qquad \text{for}\ n\geq 1,
	\] which are clearly  open subsets of $U$. Moreover, topological transitivity implies that every $V_n$ is dense in $U$, so by the Baire Category Theorem  $\bigcap_n V_n$ is  also a dense subset of $U$. 
	
	Let us show that every $x\in \bigcap_{n\geq 1} V_n$ is a point of $( F^*,d)$-equicontinuity. Assume for the sake of contradiction that there is $c>0$ such that, for every $r>0$, there are $f\in F^*$ and $y\in B(x,r)$ such that 
	\[x,y\in\dom f,\qquad d(f(x),f(y))\geq c.\] 
	Choose $m$ large enough so that $2/m<c/2$. Since $x\in \bigcap_{n\geq 1} V_n$, there is some $g\in F^*$ such that $g(x)\in B(x_m,r_m)$. By assumption, there are also $y\in X$ and $f\in F^*$ satisfying 
	\[
	y\in \dom g\cap \dom f,\quad g(y)\in B(x_m,r_m),\quad d(f(x),f(y))\geq c.
	\]
	But then
	\[
	d(fg^{-1}(y'),fg^{-1}(x'))=d(f(x),f(y))\geq c
	\]
	for $x'=g(x)$, $y'=g(y)$,
	and now~\eqref{auslanderyorkeftilde} and the triangle inequality yield
	\[
	\max\{d(\tilde f\tilde g^{-1}(x_m),\tilde f\tilde g^{-1}(x')),d(\tilde f\tilde g^{-1}(x_m),\tilde f\tilde g^{-1}(y'))\}\geq c/2>2/m,
	\]
	contradicting~\eqref{auslanderyorkediam}. 
	
	We have proved that, if $\mathcal{G}\acts X$ is topologically transitive and not sensitive to initial conditions, then there is a metric $d$ on $X$ and a system of compact generation $(U,F,\widetilde F)$ such that the set of points of $(F^*,d)$-equicontinuity is dense in $U$.
	Since $\mathcal{G}$ is equivalent to $\mathcal{G}|_U$ by Lemma~\ref{l:equivid}, Corollary~\ref{c:equicontequivalence}  yields that $\mathcal{G}$ is almost equicontinuous.
	
	If $\mathcal{G}$ is minimal, then it is trivial to check that $\bigcap_{n\geq 1} V_n=U$, so by the previous argument there are $d$ and $(U,F,\widetilde F)$ so that every point in $U$ is a point of $(F^*,d)$-equicontinuity. The result then follows by Lemma~\ref{l:uniformity} and Proposition~\ref{p:equicontinuityinvariant}.
\end{proof}

\begin{proof}[Proof of Theorem~\ref{t:ttdposicpseudo}]
	Let $\mathcal{S}$ be a generating pseudo{\textasteriskcentered}group for $\mathcal{G}$ and let $d$ be a compatible metric.	
	By Theorem~\ref{t:auslanderyorkepseudo} (Auslander--Yorke dichotomy for pseudogroups), it is enough to show that, for every $(\mathcal{S},d)$, the set of points of $(\mathcal{S},d)$-equicontinuity is empty.	
	Let $(U,F,\widetilde F)$ be a system of compact generation for $\mathcal{G}$ satisfying $\widetilde F\subset \mathcal{S}$. 
	
	Suppose for the sake of contradiction that $x\in U$ is a point of $(\mathcal{S},d)$-equicontinuity. Then it must satisfy~\ref{p:halo}(\ref{i:halo}) with some $\rho>0$. 
	By Definition~\ref{d:dpo} and Corollary~\ref{c:dpocg}, there are points \[
	q_1\in B(x,\rho/2),\qquad q_2\in B(x,\rho/2)\setminus \mathcal{G}q_1 
	\]with finite  $\mathcal{G}|_U$-orbits. Letting
	\[
	\epsilon<\frac{1}{2}d(\mathcal{G}|_Uq_1,\mathcal{G}|_Uq_2)	\]
	and using the triangle inequality, we can choose a point $q\in\{q_1,q_2\}$ satisfying 	
	\begin{equation}
		\label{gq}d(x,\mathcal{G}q)> \epsilon.
	\end{equation}
	
	For every $n\geq 1$, let $p_n$ be a point in $B(x,\rho/n)$ with finite $\mathcal{G}|_U$-orbit. Since $\mathcal{G}p_n\cap B(x,\rho)$ is finite, there is a finite set $K_n\subset \langle F\rangle$ satisfying that, for every $y\in \mathcal{G}p_n\cap B(x,\rho)$, there is $k\in K_n$ with $ky=p_n$; moreover, since $y\in B(x,\rho)$, each map $k\in K_n$ may be extended to a map $\tilde k\in \langle \widetilde F\rangle$ with $B(x,\rho)\subset \dom \tilde k$. For each $n$, let $\widetilde K_n$ denote the collection of all such extensions. Since $\widetilde K_n$ is a finite set of maps, there is a neighborhood $W_n$ of $q$ so that $W_n\subset B(x,\rho/2)$ and $\tilde k(W_n)\subset B(\tilde k(q),\epsilon/4)$ for every $\tilde k\in\widetilde K_n$.
	
	$\mathcal{G}|_U$ is topologically transitive, so there are maps $f_n$ and points $v_n\in B(x,\rho/n)$ such that $f_nv_n\in W_n$; again, $B(x,\rho)\subset \dom\tilde f_n$ for all $n$.	
	If $d(\tilde f_nv_n,\tilde f_np_n)\geq \rho/2$ for infinitely many $n$, then the triangle inequality yields 
	\[\max\{d(\tilde f_nx,\tilde f_np_n),d(\tilde f_nx,\tilde f_nv_n)\}\geq \rho/4,\]
	showing that $x$ is not a point of $(\mathcal{S},d)$-equicontinuity, a contradiction. 
	Hence, we may assume 
	$d(\tilde f_nv_n,\tilde f_np_n)< \rho/2$ for $n$ large enough. 
	In particular, since $\tilde f_nv_n\in W_n\subset B(x,\rho/2)$, we have $\tilde f_np_n\in B(x,\rho)$, so there are maps $k_n$ in $K_n$ satisfying $k_nf_n(p_n)=p_n$ and $B(x,\rho)\subset \dom \tilde k_n\tilde f_n$ for $n$ large enough.
	
	Now we have
	\begin{equation}\label{bigineq}
	\max\{d(\tilde k_n\tilde f_n p_n,\tilde k_n\tilde f_n x),d(\tilde k_n\tilde f_n x, \tilde k_n\tilde f_n v_n)\}\geq \epsilon/4
	\end{equation} 
        for $n$ large enough because, otherwise, the triangle inequality and $\tilde k_n\tilde f_n p_n=p_n$ would yield 
	\begin{align*}
		d(x,\tilde k_n q)&\leq d(x,p_n)+d(\tilde k_n\tilde f_n p_n,\tilde k_n\tilde f_n x)+\\
		&\phantom{\leq d(x,} 	+d(\tilde k_n\tilde f_n x, \tilde k_n\tilde f_n v_n)+d(\tilde k_n\tilde f_n v_n,\tilde k_n q )\\
		&<\epsilon/4+\epsilon/4+\epsilon/4+\epsilon/4\\
		&=\epsilon,
	\end{align*}
	contradicting~\eqref{gq}.	
	The inequality in~\eqref{bigineq} and the fact that the sequences $\{v_n\}$ and $\{p_n\}$ converge to $x$ are at odds with the assumption that $x$ was a point of $(\mathcal{S},d)$-equicontinuity. Since $x$ was an arbitrary point, we infer that the set of points of $(\mathcal{S},d)$-equicontinuity is empty, as desired.
\end{proof}

\subsection{A non-compactly generated, countably generated and topologically transitive pseudogroup that is not sensitive}\label{s:cantor}
In this section we construct a counterexample showing that compact generation cannot be dropped in the statement of Theorem~\ref{t:ttdposicpseudo}.

Let $Y=\{0,1\}^\mathbb{Z}$, which is a Cantor set.
We use the greek letters $\alpha,\beta,\ldots$ to denote elements of $Y$ and the notation $\alpha=(\alpha_i)_{i\in \mathbb{Z}}, \beta=(\beta_i)_{i\in \mathbb{Z}}$. Let $\sigma\colon Y\to Y$ be the shift function, defined by
\[
(\sigma(\alpha))_i= (\alpha_{i+1}).
\]
Let $G$ denote the subgroup of $\Homeo(Y)$ generated by $\sigma$, endowed with the obvious action $G\curvearrowright Y$. It is well-known that the action $G\curvearrowright Y$  is topologically transitive and has density of periodic orbits.

The point $\mu:=(...,0,0,0,...)$ is a fixed point of $G$. For $n\geq 0$, let
\[
U_n =\{\, \alpha\in Y \mid \alpha_i=0\ \text{for}\ |i|<n \,\}.
\]
Note that $U_0=Y$ and that $\{U_n\}$ is a system of clopen neighbourhoods for $\mu$. We also have
\begin{equation}\label{sigmasigma-1}
	\sigma(U_{n+1}),\ \sigma^{-1}(U_{n+1})\subset U_{n}\qquad  \text{for all}\  n\geq 0. 
\end{equation}

Let $X=\{\,(n,\alpha)\in\mathbb{N}\times Y\mid \alpha\in U_n\,\}$; that is, $X$ is the disjoint union $\bigsqcup_{i\geq 0}U_i$. Let $f,g\in\Ph(X)$ be defined by
\begin{align}
	f(n,\alpha)&=(n,\sigma(\alpha)),\quad & &\dom f=\{\, (n,\alpha)\in X\mid \sigma(\alpha)\in U_n\setminus U_{n+2}\,\}, \label{defnf}\\
	g(n,\alpha)&=(n+1,\alpha),\quad & &\dom g = \{\,(n,\alpha)\in X\mid \alpha\in U_{n+1}\,\}.
\end{align}
In other words, $f$ is defined for pairs $(n,\alpha)$ with $\alpha_i=0$ for $i=-n,\ldots, n+1$ but there is an index in the segment $[-n-1,n+3]$ where $\alpha$ takes the value $1$.
Note that $\dom f$, $\im f$, $\dom g$, and $\im g$ are clopen subsets of $X$. Finally, let $\mathcal{G}$ be the pseudogroup generated by $f$ and $g$.

\begin{lemma}\label{l:gna}
	For every $(n,\alpha)\in X$, $\mathcal{G}(n,\alpha)=\{(m,\beta)\in X\mid \beta\in G\alpha\}$.
\end{lemma}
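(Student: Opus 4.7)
The inclusion $\mathcal{G}(n,\alpha) \subseteq \{(m,\beta) \in X \mid \beta\in G\alpha\}$ will follow immediately from the fact that the generators preserve the $G$-orbit of the second coordinate: $g$ acts trivially on the second coordinate, and $f$ applies $\sigma$. For the reverse direction, given $(m,\beta)\in X$ with $\beta=\sigma^k(\alpha)$ for some $k\in\mathbb{Z}$, the plan is to construct an explicit element of $\mathcal{G}$ sending $(n,\alpha)$ to $(m,\beta)$ by routing the journey through level zero.

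First, $g^{-n}$ will descend $(n,\alpha)$ to $(0,\alpha)$, which is legitimate because $(n,\alpha)\in X$ yields $\alpha\in U_n\subseteq \cdots\subseteq U_1$. Symmetrically, $g^m$ will ascend $(0,\beta)$ to $(m,\beta)$ using $\beta\in U_m$. The remaining task is to move from $(0,\alpha)$ to $(0,\sigma^k(\alpha))$, and iteration reduces this to the following key subclaim: for every $\gamma\in Y$, there is an element of $\mathcal{G}$ taking $(0,\gamma)$ to $(0,\sigma(\gamma))$ (the case of $\sigma^{-1}$ being symmetric, carried out with $f^{-1}$ in place of $f$).

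To prove the subclaim, I will split into cases. If $\sigma(\gamma)\notin U_2$ then $f$ itself is already defined at $(0,\gamma)$; if $\gamma=\mu$ then $\sigma(\gamma)=\gamma$ and the identity suffices. Otherwise, let $N\geq 2$ be the largest integer with $\sigma(\gamma)\in U_N$, which is finite since $\sigma(\gamma)\neq\mu$. Unwinding the definition, $\sigma(\gamma)\in U_N$ translates to $\gamma_j=0$ for $-N+2\leq j\leq N$, and this in particular yields $\gamma\in U_{N-1}$. I then claim that $g^{-(N-1)}\circ f\circ g^{N-1}$ is a well-defined element of $\mathcal{G}$ sending $(0,\gamma)$ to $(0,\sigma(\gamma))$: the ascent $(0,\gamma)\to(N-1,\gamma)$ is valid because $\gamma\in U_{N-1}\subseteq \cdots\subseteq U_1$; the shift $f$ is defined at $(N-1,\gamma)$ because $\sigma(\gamma)\notin U_{N+1}$ by maximality of $N$; and the descent $(N-1,\sigma(\gamma))\to(0,\sigma(\gamma))$ is valid because $\sigma(\gamma)\in U_N\subseteq \cdots\subseteq U_1$.

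The main obstacle is calibrating this detour. The crucial point is the implication $\sigma(\gamma)\in U_N\Rightarrow\gamma\in U_{N-1}$ (which in general cannot be sharpened to $\gamma\in U_N$), giving exactly the amount of vertical room needed to climb before the forbidden-region constraint in $\dom f$ becomes loose enough to apply $f$. Once this is settled, the remaining verifications are routine checks against the definitions of $\dom f$ and $\dom g$.
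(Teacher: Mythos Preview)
Your proof is correct and follows essentially the same approach as the paper's. Both arguments reduce to a single application of $\sigma$ and identify the same key level: with $l$ the largest integer such that $\sigma(\gamma)\in U_l$, one climbs to level $l-1$ (using $\sigma(\gamma)\in U_l\Rightarrow\gamma\in U_{l-1}$, which is exactly the paper's relation~\eqref{sigmasigma-1}), applies $f$ there (legitimate since $\sigma(\gamma)\notin U_{l+1}$), and descends. The only cosmetic difference is that you route everything through level~$0$ first, whereas the paper writes the single word $g^{m-k}fg^{k-n}$ going directly from level $n$ to level $k=l-1$ to level $m$; your parenthetical about $\sigma^{-1}$ is more cleanly justified by simply inverting the pseudogroup element you built for $\sigma$.
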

\begin{proof}
	If follows trivially from the definitions of $f$ and $g$ that every point in $\mathcal{G}(n,a)$ is of the form $(m,\beta)\in X$ for some $\beta\in G\alpha$, so let us prove the reverse inclusion. First note that, since $\mu$ is a fixed point of $G$, the lemma is trivial for $\alpha=\mu$, so assume $\alpha\neq \mu$;  clearly, it is enough to show that 
	\begin{equation}\label{magna}
		\{(m,\sigma (\alpha))\in X\}\subset \mathcal{G}(n,\alpha),
	\end{equation}
	so let's prove it. Let $(m,\alpha)\in X$, we will show first that
	\begin{equation}\label{magna2}
		\text{there is}\ k\in\mathbb{N}\quad  \text{such that}\quad (k,\alpha), (k,\sigma(\alpha))\in X.
	\end{equation} 
	Since $\alpha\neq \mu$, there is a largest $l\geq 0$ such that $\sigma (\alpha)\in U_{l}$. If $l=0$, then $(0,\sigma(\alpha))\in U_0\setminus  U_2$, so  $(0,\alpha)\in \dom f$ by~\eqref{defnf}; if $l\geq 1$, then $\alpha\in U_{l-1}$ by~\eqref{sigmasigma-1}. We chose $l$ so that $\sigma(\alpha)\notin U_{l+1}$, $(l-1,\alpha)\in \dom f$ and $f(l-1,\alpha)=(l-1,\sigma(\alpha))$ by~\eqref{defnf}, proving~\eqref{magna2} and yielding
	\[
	(m,\sigma(\alpha))=g^{m-k}fg^{k-n}(n,\alpha).
	\]
	This shows~\eqref{magna}.
\end{proof}

\begin{corollary}
	$\mathcal{G}$ is topologically transitive and has density of periodic orbits.
\end{corollary}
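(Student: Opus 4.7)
The plan is to deduce both properties from Lemma~\ref{l:gna}, which reduces the dynamics of $\mathcal{G}$ on $X$ to the dynamics of $G$ on $Y$ fiberwise. Since $X$ is a Polish space (being a countable disjoint union of clopen subsets of $Y$), it is Baire and separable, so by Lemma~\ref{l:pt} topological transitivity is equivalent to point transitivity.

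For topological transitivity, I would start from a point $\alpha \in Y$ whose $G$-orbit is dense in $Y$; such $\alpha$ exists because the shift $G \acts Y$ is topologically transitive. By Lemma~\ref{l:gna}, the $\mathcal{G}$-orbit of $(0,\alpha)$ is
\[
\mathcal{G}(0,\alpha) = \{\,(m,\beta) \in X \mid \beta \in G\alpha\,\}.
\]
To verify this orbit is dense in $X$, I would check that it meets every basic open set of the form $\{m\} \times W$, where $W \subset U_m$ is open in $Y$: density of $G\alpha$ in $Y$ gives $\beta \in G\alpha \cap W$, and since $W \subset U_m$ we have $(m,\beta) \in X$, as required.

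For density of periodic orbits, I would take the open set $V = \{0\} \times Y \subset X$. Since $U_0 = Y$, every $(n,\alpha) \in X$ satisfies $(0,\alpha) \in V \cap \mathcal{G}(n,\alpha)$ by Lemma~\ref{l:gna}, so $V$ meets every $\mathcal{G}$-orbit. Then I would observe that any map in $\mathcal{G}|_V$ sends $V$ into $V$, so
\[
\mathcal{G}|_V(0,\alpha) \subset V \cap \mathcal{G}(0,\alpha) = \{\,(0,\beta) \mid \beta \in G\alpha\,\},
\]
which is finite whenever $\alpha$ is $G$-periodic. Since periodic points for the shift are dense in $Y$, the finite $\mathcal{G}|_V$-orbits are dense in $V$, establishing Definition~\ref{d:dpo}.

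Neither step should present any real obstacle — the whole point of Lemma~\ref{l:gna} is to ensure the orbit structure of $\mathcal{G}$ mirrors that of the shift, so the only thing to watch for is the domain conditions in the definitions of $f$ and $g$, which are handled automatically once one restricts attention to the ``base fiber'' $\{0\} \times Y$.
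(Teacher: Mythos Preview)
Your argument is correct. The transitivity half coincides with the paper's proof verbatim: pick $\alpha$ with dense $G$-orbit and invoke Lemma~\ref{l:gna} and Lemma~\ref{l:pt}.

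For density of periodic orbits the paper takes a slightly different route. Rather than restricting to $V=\{0\}\times Y$ and showing that $\mathcal{G}|_V$-orbits are finite, the paper proves the stronger fact that the \emph{full} $\mathcal{G}$-orbit of $(0,\alpha)$ is finite whenever $\alpha\neq\mu$ is $\sigma$-periodic: since such an orbit avoids $U_k$ for $k$ large, the set $\{(m,\beta)\in X\mid \beta\in G\alpha\}$ is finite. Your approach sidesteps this by using exactly what Definition~\ref{d:dpo} asks for---a single open set meeting every orbit on which the restricted orbits are finite---and so avoids both the exclusion of $\mu$ and the argument about eventually leaving the $U_k$'s. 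The paper's version yields a bit more information (finite orbits dense in all of $X$, not just in one fibre), but for the corollary as stated your argument is cleaner and entirely sufficient.
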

\begin{proof}
	Let us prove transitivity first. By Lemma~\ref{l:pt}, it is enough to show that there is a dense orbit; choose $\alpha\in Y$ with a dense $G$-orbit, then $\mathcal{G}(0,\alpha)$ is dense in $X$ by Lemma~\ref{l:gna}.
	
	In order to prove density of periodic orbits, let $(m,\beta)\in X$ and let $V\subset U_m$ be an open neighborhood of $\beta$ in $Y$; we will prove
	that there is a periodic point $(0,\alpha)$ with $\mathcal{G}(0,\alpha)\cap \{m\}\times V\neq \emptyset$. 
	Since the finite $G$-orbits are dense, there is some periodic $\alpha\in Y$ such that $\alpha\neq \mu$ and $G\alpha\cap V\neq \emptyset$, so $\mathcal{G}(0,\alpha)\cap\{m\}\times V\neq \emptyset$ by Lemma~\ref{l:gna}. Let us show that $(0,\alpha)$ has a finite $\mathcal{G}$-orbit. Indeed, there is some $k$ such that $G\alpha\notin U_k$, so the set 
	\[
	\{(m,\sigma^n(\alpha))\mid \sigma^n(\alpha)\in U_n\}
	\] 
	is finite, and therefore $(0,\alpha)$ is a periodic point by Lemma~\ref{l:gna}.
\end{proof}

\begin{lemma}\label{l:zeroezroinf}
	$(0,\mu)$ is a point of equicontinuity for $\mathcal{G}$.
\end{lemma}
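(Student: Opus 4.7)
The plan is to exhibit a generating set $S$ and a compatible metric $d$ witnessing equicontinuity at $(0,\mu)$. I will take $S = \{f, g\}$ and equip $X = \bigsqcup_{n \geq 0} U_n$ with the metric
\[
d((n,\alpha),(m,\beta)) = \begin{cases} d_Y(\alpha,\beta) & \text{if } n=m,\\ 1 & \text{if } n\neq m,\end{cases}
\]
where $d_Y(\alpha,\beta) = 2^{-\min\{|i| : \alpha_i \neq \beta_i\}}$ is the standard compatible metric on the Cantor set $Y$, of diameter at most $1$. Verifying that $d$ is compatible with the disjoint-union topology on $X$ is routine.

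The crucial structural claim is: any $s \in S^*$ with $(0,\mu) \in \dom s$ acts as the $k$-th power of $g$ for some $k \geq 0$ on an open neighborhood of $(0,\mu)$ in $\dom s$. Writing $s = t|_V$ with $t = w_n \cdots w_1 \in \langle S \rangle$, each $w_i \in \{f,g,f^{-1},g^{-1}\}$, and $(0,\mu) \in V$, I will prove by induction on $i$ that every intermediate iterate satisfies $w_i \cdots w_1(0,\mu) = (k_i,\mu)$ with $k_i \geq 0$, and that each $w_i \in \{g, g^{-1}\}$. The inductive step leverages $\sigma(\mu) = \mu \in U_\ell$ for every $\ell$: by the domain formula \eqref{defnf}, $f$ requires $\sigma(\alpha) \notin U_{k_i + 2}$, which fails at $\alpha = \mu$, and a symmetric argument rules out $f^{-1}$. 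Consequently $t$ realizes a non-negative walk on $\mathbb{Z}$ starting at $0$ via letters $g^{\pm 1}$; if $M$ denotes its maximal level and $k$ its endpoint, then a direct computation using $\dom g = \{(n,\alpha) \mid \alpha \in U_{n+1}\}$ together with \eqref{sigmasigma-1} yields $\dom t \cap (\{0\} \times Y) = \{0\} \times U_M$ and $t(0,\alpha) = (k,\alpha)$ throughout this set.

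Equicontinuity then follows with modulus $\delta(\epsilon) = \min(\epsilon, 1/2)$. Any $y \in B_d((0,\mu), \delta) \cap \dom s$ satisfies $d((0,\mu), y) < 1$, which forces $y = (0,\alpha)$ with $d_Y(\alpha, \mu) < \delta$; membership in $\dom s \subset \dom t$ then gives $\alpha \in U_M$, hence $s(y) = (k,\alpha)$ and
\[
d(s(0,\mu),\, s(y)) = d_Y(\mu, \alpha) < \delta \leq \epsilon.
\]
The only substantive step is the inductive exclusion of $f^{\pm 1}$ from any word defined at $(0,\mu)$; everything else is bookkeeping. The main conceptual obstacle is precisely ensuring that no clever combination of $f^{\pm 1}$ and $g^{\pm 1}$ in $S^*$ is defined at $(0,\mu)$ while failing to act as a pure $g$-power near it, and the fixed-point identity $\sigma(\mu) = \mu$ combined with $\mu \in \bigcap_\ell U_\ell$ is exactly what closes this loophole at every inductive step.
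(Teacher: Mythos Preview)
Your proof is correct and in fact more careful than the paper's own argument. The paper disposes of the lemma in one line, asserting that $(0,\mu)\notin\dom h$ for every $h\in S=\{f,f^{-1},g,g^{-1}\}$, which would make the equicontinuity condition vacuous for all nontrivial words. That assertion is false for $h=g$: since $\mu\in U_1$, the point $(0,\mu)$ lies in $\dom g=\{(n,\alpha)\mid\alpha\in U_{n+1}\}$. Your argument repairs this gap. You correctly isolate the genuine obstruction, namely that $(0,\mu)\notin\dom f\cup\dom f^{-1}$ because $\sigma(\mu)=\mu\in U_\ell$ for every $\ell$, and then show inductively that any word in $\langle S\rangle$ defined at $(0,\mu)$ must be a pure $g$-word tracing a non-negative walk; such a word acts by $(0,\alpha)\mapsto(k,\alpha)$ on $\{0\}\times U_M$ and is therefore an isometry in your product metric. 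The two proofs share the same underlying idea---only ``harmless'' generators are defined along the orbit of $(0,\mu)$---but yours actually carries the analysis through, whereas the paper's stated claim does not hold as written.
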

\begin{proof}
	Let $S$ be the generating set $\{f,f^{-1},g,g^{-1}\}$, then $(0,\mu)\notin \dom h$ for $h\in S$, and the result follows.
\end{proof}

\section{Foliated dynamics}\label{s:foliated}

\subsection{A non-chaotic foliated space with chaotic holonomy pseudogroup.}\label{s:chaoschaos}
Using a construction inspired by Example~\ref{e:rz}, we will show that density of periodic orbits in the holonomy pseudogroup does not imply density of compact leaves. Chaos for foliated spaces is, therefore,  a stronger condition than chaos for pseudogroups, at least with our definitions.

Think of $\mathbb{T}^2$ as the quotient $\mathbb{R}^2/\mathbb{Z}^2$ and consider Arnold's cat map $f\colon \mathbb{T}^2\to \mathbb{T}^2$, which is obtained by factoring the linear map 
\[
\tilde f\colon\mathbb{R}^2\to\mathbb{R}^2,\qquad\tilde f(x,y)=(2x+y,x+y)
\]
through the quotient  $\pi \colon \mathbb{R}^2\to \mathbb{T}^2$.

The cat map $f$ is well-known to be chaotic, so, by Theorem~\ref{t:sicactionpseudogroup}, the pseudogroup $\mathcal{G}\acts\mathbb{T}^2$ generated by $f$ is chaotic too. It is also easy to check that the suspension foliation\footnote{See~\cite[{Section~3.1}]{CandelConlon2000-I}  for an introduction to suspension folations.} induced by the representation $\pi_1(\mathbb{S}^1)\to \Homeo(\mathbb{T}^2)$ sending a generator to $f$ satisfies Definition~\ref{d:chaosfs} and is, therefore, chaotic. 

Consider now the pseudogroup $\mathcal{H}\acts \mathbb{R}^2$ generated by $\tilde f$ and the integer translations. As in Example~\ref{e:rz},	
\[
\Phi:=\{\,\pi|_{U}\mid U\subset \mathbb{R}^2\ \text{open},\ \pi|_{U}\colon U\to \phi(U)\ \text{is a homeomorphism}\,\}
\] 
is an equivalence $\Phi\colon (\mathbb{R}^2,\mathcal{H})\to (\mathbb{T}^2,\mathcal{G})$; $\mathcal{H}$ is then a chaotic pseudogroup by Theorem~\ref{t:invariant}. Let $S$ denote the closed surface of genus three, whose fundamental group has presentation
\[
\pi_1(S)=\langle\, a_1,b_1, a_2,b_2,a_3,b_3\mid [a_1,b_1][a_2,b_2][a_3,b_3]\,\rangle,
\] 
and consider the suspension foliation $X$  induced by the representation $\phi\colon\pi_1(S)\to \Homeo(R^2)$ defined by
\begin{align*}
\phi(a_1)&=\tilde f,& \phi(a_2)&=[(x,y)\mapsto (x+1,y)],\\ \phi(a_3)&=[(x,y)\mapsto (x,y+1)],& \phi(b_1)&=\phi(b_2)=\phi(b_3)=\id.
\end{align*}
The holonomy pseudogroup is obviously equivalent to $\mathcal{H}$, hence chaotic. The foliated space does not have any compact leaves, however, because all $\mathcal{H}$-orbits are infinite, so $X$ is not a chaotic foliated space according to Definition~\ref{d:chaosfs}.

\subsection{A non-compact  topologically transitive foliated space with density of compact leaves which is not sensitive}
\label{s:ttdposicfol}
This section shows that compactness cannot be omitted in the statement of Theorem~\ref{t:ttdposicfol}.
	
Let us use the pseudogroup $\mathcal{G}\curvearrowright X$ from Section~\ref{s:cantor} to obtain a topologically transitive foliated space with density of compact leaves, but with an equicontinuous leaf. We start by constructing a directed graph $Z$, which can be defined as a pair $Z=(V(Z),E(Z))$ where $V(Z)$ is the vertex set and $E(Z)\subset V(Z)\times V(Z)$ is the set of directed edges. We think of $z$ as the origin and $z'$ as the end vertex of the directed edge $(z,z')\in E(Z)$.

Let $V(Z)=X$, and let $E(Z)$ consist of all edges of the form 
$((m,a),f(m,a))$ and $((n,b),g(n,b))$ for $(m,a)\in\dom f$, $(n,b)\in \dom g$.
Let $Y$ denote the set $\{\dom f, \im f, \dom g, \im g\}$, and define the map
 \[\nu\colon X\to 2^{Y}, \quad \nu(x)(A)=\begin{cases}1 \quad \text{ if } x\in A,\\ 0\quad \text{ else,}\end{cases}\]where $A\in Y$. Under the usual identification of maps in $2^Y$ and subsets of $Y$, $\nu(x)$ is the subset of $Y$ containing exactly the elements of $Y$  that contain $x$.
Since $\dom f$, $\im f$, $\dom g$, and $\im g$ are clopen, the sets $\nu^{-1}(C)$ for $C\subset Y$ form a partition of $Y$ by clopen sets. Choose disjoint open balls in the two-sphere $\mathbb{S}^2$ indexed by the elements $A\in Y$, and denote them by $B_A$. 

For $C\subset Y$, let $S_C:=\mathbb{S}^2\setminus \bigcup_{A\in C} B_A$, and denote the boundary circles in $S_C$ by $\Delta_A$ ($A\in C$). Let 
\begin{align*}
	\mathfrak{Y}_1=\bigsqcup_{C\subset Y}\nu^{-1}(C)\times S_C,\qquad  \mathfrak{Y}_2=E(Z)\times \mathbb{S}\times[0,1].
\end{align*}

Assume that we have fixed identifications of the boundary circles $\Delta_A$ with $\mathbb{S}$.
Let $\mathfrak{X}$ be the following quotient of $\mathfrak{Y}_1\sqcup \mathfrak{Y}_2$: for each $((m,a),h(m,a))\in E(Z)$ with $h\in\{f,g\}$,  identify
\begin{align*}
	\{((m,a),h(m,a))\}\times \mathbb{S}\times\{0\}\ &\sim  \ \{((m,a),h(m,a))\}\times \Delta_{\dom h},\\ \{((m,a),h(m,a))\}\times \mathbb{S}\times\{1\} \ &\sim \ \{((m,a),h(m,a))\}\times \Delta_{\im h}.
\end{align*}

First, note that, locally, the space looks like the product of $\mathbb{R}^2$ times a Cantor set, and is therefore a matchbox manifold. Take any point $u\in \mathbb{S}^2$ that is not contained in any $B_A$, $A\in Y$; the image by the quotient map $\pi\colon\mathfrak{Y}_1\sqcup\mathfrak{Y}_2\to\mathfrak{X}$ of \[\bigsqcup_{C\subset Y}\nu^{-1}(C)\times\{u\}\cong X\times \{u\}\subset \mathfrak{Y}_1\]
meets every connected component of $\mathfrak{X}$, and is therefore a transversal meeting every leaf. Every path in $\mathfrak{X}$ is homotopic to the concatenation of paths of the form $\pi(\tau)$, where $\tau$ is either a path contained in a plaque $\{(n,a)\}\times S_C\subset \mathfrak{Y}_1$ for $(n,a)\in \nu^{-1}(C)$, $C\subset Y$, or the path $t\in[0,1]\mapsto (((n,a),h(n,a)),s,\pm t)$ for $((n,a),h(n,a))\in E(Z)$ and $s$ a point in a boundary circle. For $\tau$ of the first type, $\pi(\tau)$ is contained in the same plaque, so it induces an identity transformation on the transversal; for $\tau$ of the second kind, the path begins in the plaque corresponding to the point $(u,(n,a))$ and ends at the point $(u,h^{\pm1}(n,a))$, where $h$ is either $f$ or $g$. So these paths induce $f$, $f^{-1}$, $g$ or $g^{-1}$ on the transversal, and we see that the holonomy pseudogroup is equivalent to $\mathcal{G}\curvearrowright X$. 

We conclude that $\mathfrak{X}$ is a topologically transitive matchbox manifold and the leaf corresponding to $(0,0^\infty)\in X$ is a leaf of equicontinuity by Lemma~\ref{l:zeroezroinf}. Moreover, the periodic points of the action on $X$ correspond to compact leaves in $\mathfrak{X}$, so we also have density of compact leaves. 

\subsection{An affine pseudogroup}\label{s:affinepseudogroup}
Before embarking on the proof of Theorem~\ref{t:counterfol}, we need to obtain a modified version of the pseudogroup in Section~\ref{s:nonsensitiveaction}. The reason is that we will construct the foliated space counterexample with a particular representative of the holonomy pseudogroup in mind, but we cannot use the pseudogroup of Section~\ref{s:nonsensitiveaction} because all its orbits are infinite.

We start by fixing the following notation
\begin{equation}
\label{defnlr}	l_n^-=\frac{1}{3\cdot2^{1+n}},\quad r_n^-=\frac{1}{3\cdot2^{n}},\quad l_n^+=1-r_n^-,\quad r_n^+=1-l_n^-,
\end{equation}
and then we fix the following intervals in order to define a sequence of toral linked twists:
\begin{align}
	H&=\{(x,y)\in \mathbb{T}^2\mid 1/6\leq y\leq 5/6\},\notag\\
	V_0&=\left\{(x,y)\in \mathbb{T}^2\mid 1/6\leq x\leq 5/6\right\},\notag\\
	V_n^-&=\left\{(x,y)\in \mathbb{T}^2\mid l_n^-\leq x\leq r_n^-\right\}\qquad \text{for}\ n\geq 1,\notag\\
	V_n^+&=\left\{(x,y)\in \mathbb{T}^2\mid l_n^+\leq x\leq r_n^+\right\} \qquad \text{for}\ n\geq 1.\notag
\end{align}
Let $T_h\colon \mathbb{T}^2\to \mathbb{T}^2$ be the horizontal twist defined by
\begin{equation}\label{defnth}
T_h(x,y)=\begin{cases}
	(x+6(y-\frac{1}{6}),y)\quad &\text{if}\ (x,y)\in H,\\
	(x,y)\quad &\text{else};
\end{cases}
\end{equation}
and, for $m\in\mathbb{N}$, let $T_{v,m}\colon\mathbb{T}^2\to \mathbb{T}^2$ be  the vertical twist:
\begin{equation}\label{defntvm}
T_{v,m}(x,y)=\begin{cases}
	(x,y+6(x-1/6))\quad &\text{if}\ (x,y)\in V_0,\\
	(x,y+3\cdot 2^{1+n}(x-l_n^-))\quad &\text{if}\ (x,y)\in V_n^-,\ n\leq m,\\
	(x,y+3\cdot 2^{1+n}(x-l_n^+))\quad &\text{if}\ (x,y)\in V_n^+,\ n\leq m,\\
	(x,y)\quad &\text{else}.
\end{cases}
\end{equation}
Finally, let $T_m=T_{v,m}\circ T_h$ be the corresponding linked twist map.

We denote by $\Delta_n$ the set of points in $\mathbb{T}^2$ where $T_n$ is not smooth; i.e.,
\[
\Delta_n= \partial H\cup \bigcup_{m\leq n}T_h^{-1}(\partial V^-_m)\cup \bigcup_{m\leq n}T_h^{-1}(\partial V^+_m).
\]
Finally, let 
\begin{align*}
\Delta&=\bigcup_n \Delta_n,\\
M_n&:=H\cup \bigcup_{m\leq n}V_m^+\cup \bigcup_{m\leq n}V_m^-,
\end{align*}
and note that $\Delta_n\subset M_n$ and $M_n$ is the set where  $T_m$ is topologically transitive and sensitive to initial conditions by Theorem~\ref{t:twisttt}.

The linear nature of these linked twists will allow us to find a common set of periodic orbits. Let 
\[
Q_n= M_n\cap \left\{\left(\frac{l_1}{2^{n}},\frac{l_2}{2^{n}}\right)\mid l_1,l_2\in \{0,\ldots, 2^{n}-1\}\right\}, \quad n\geq 1.
\]

\begin{lemma}\label{l:tq}
	$T_m(Q_n)=Q_n$ for every $n$ and $m$.
\end{lemma}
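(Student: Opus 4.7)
The plan is to establish the inclusion $T_m(Q_n) \subseteq Q_n$; since $T_m$ is a bijection of $\mathbb{T}^2$ and $Q_n$ is finite, equality follows for free. Writing $T_m = T_{v,m} \circ T_h$, I will verify $T_h(Q_n) = Q_n$ and then show that $T_{v,m}$ also preserves $Q_n$.

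For $T_h$, take a point $(l_1/2^n, l_2/2^n) \in Q_n$. Outside $H$ the map is the identity, so assume the point lies in $H$. Its image is $((l_1 + 6 l_2)/2^n - 1,\; l_2/2^n)$, whose coordinates modulo $1$ are still of the form $l/2^n$, and it remains in $H \subseteq M_n$.

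For $T_{v,m}$ I would argue strip by strip, leaning on the key arithmetic observation that no number of the form $l_1/2^n$ lies in any $V_k^{\pm}$ with $k > n$: such a strip has length $1/(3 \cdot 2^{k+1}) < 1/2^n$ and its endpoints involve the factor $3$ in the denominator, so it contains no dyadic number of denominator $2^n$. Thus on $Q_n$ the map $T_{v,m}$ only ever interacts with $V_0$ and with the strips $V_k^{\pm}$ for $1 \le k \le n$; in effect it acts on $Q_n$ exactly as $T_{v,\min(m,n)}$ does. On $V_0$, the $y$-shift equals $(6 l_1 - 2^n)/2^n$, manifestly of the required form. On $V_k^{\pm}$ with $k \le n$, the shift $3 \cdot 2^{k+1}(l_1/2^n - l_k^{\pm})$ reduces to $3 l_1 / 2^{n-k-1}$ up to an integer, and the latter has denominator dividing $2^n$. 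Since $T_{v,m}$ preserves each strip on which it acts non-trivially, the image always stays in $M_n$, so $T_{v,m}(Q_n) \subseteq Q_n$.

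The main obstacle is precisely this control over strips $V_k^{\pm}$ of index $k > n$: a priori, for large $m$, $T_{v,m}$ could try to shear a point of $Q_n$ inside one of these thin strips and push it out of $M_n$, breaking the case analysis. The ``three-adic'' endpoints of the $V_k^{\pm}$ are what rule this out, and this makes the bookkeeping uniform in $m$. Once that point is in place, combining the two inclusions yields $T_m(Q_n) \subseteq Q_n$, and the bijection argument closes the proof.
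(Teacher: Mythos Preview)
Your argument is correct and follows the paper's route: the paper simply absorbs the affine constants into $\mathbb{Z}$, rewriting $T_h(x,y)=(x+6y,y)$ on $H$ and $T_{v,m}(x,y)=(x,y+3\cdot 2^{1+k}x)$ on each $V_k^{\pm}$, so that all coefficients are integers and the invariance of the dyadic grid (hence of $Q_n$) is declared ``obvious.'' Your explicit treatment of the strips $V_k^{\pm}$ with $k>n$ makes precise a point the paper suppresses. One small correction: the reasoning ``length $<1/2^n$ and endpoints involve a factor~$3$'' does not by itself exclude a dyadic point (e.g.\ $[0.4,0.6]$ has length $<1/4$, non-dyadic endpoints, and contains $1/2$); the clean argument is that for $k>n$ one has $r_k^-=1/(3\cdot 2^k)<1/2^n$ and $l_k^+=1-1/(3\cdot 2^k)>1-1/2^n$, so no $l_1/2^n$ with $0\le l_1\le 2^n-1$ can lie in $V_k^{\pm}$.
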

\begin{proof}
	We can rewrite~\eqref{defnth} and, using~\eqref{defnlr},  also rewrite~\eqref{defntvm} as
	\begin{align*}
		T_h(x,y)&=\begin{cases}
			(x+6y,y)\quad &\text{if}\ (x,y)\in H,\\
			(x,y)\quad &\text{else};
			\end{cases}\\
			T_{v,m}(x,y)&=\begin{cases}
				(x,y+6x)\quad &\text{if}\ (x,y)\in V_0,\\
				(x,y+3\cdot 2^{1+n}x)\quad &\text{if}\ (x,y)\in V_n^-,\ n\leq m,\\
				(x,y+3\cdot 2^{1+n}x)\quad &\text{if}\ (x,y)\in V_n^+,\ n\leq m,\\
				(x,y)\quad &\text{else}.
			\end{cases}
	\end{align*}
The result is now obvious.
\end{proof}
Note that $Q_n\cap \partial H=\emptyset$ because the points in $\partial H$ have $y$-coordinate $1/6$ or $5/6$, which cannot be expressed as a fraction whose denominator is a power of $2$. Similarly, the above expression for $T_h$ and the definitions of $V_m^+$ and $V_m^-$ show that $Q_n\cap \Delta_m=\emptyset$ for every $n$ and $m$, or, equivalently,
\begin{equation}	\label{qndelta}
	Q_n\cap \Delta =\emptyset\qquad \text{for every} \ n.
\end{equation}
Also, by defining 
\[
\widetilde{Q}_0=Q_0=\emptyset,\qquad
\widetilde{Q}_n=Q_n\setminus Q_{n-1} \quad\text{for}\ n\geq1,
\]
we obtain by Lemma~\ref{l:tq}
\begin{equation}\label{tildeq}
T_m(\widetilde{Q}_n)=\widetilde{Q}_n\qquad \text{for every}\quad n,m.
\end{equation}

Up to this point, we have proceeded almost in the same way as in Section~\ref{s:nonsensitiveaction}. There, we used the pseudogroup on $Y:=\mathbb{T}^2\times \mathbb{N}$ defined by the maps
\[
((x,y),n)\mapsto (T_n(x,y),n)\qquad \text{and} \qquad ((x,y),n)\mapsto ((x,y),n+1).
\]
In order to get affine maps, we will restrict the first map to an appropiate subspace; to obtain density of finite orbits, we will ``cut" open balls with center points in $Q_n$ out of the domain of the second map. Let us start by finding suitable radii.

\begin{lemma}\label{l:radii}
	There is a decreasing sequence $r_0,r_1,\ldots$ of positive radii  such that, for every $n\geq 0$ and $(x,y)\in \widetilde Q_n$,
	\begin{enumerate}[(i)]
		\item
		\label{i:radiimn}$B((x,y),r_n)\subset M_n$,
		\item \label{i:radiidelta}$d(B((x,y),r_n), \Delta_{n})>0$,
		\item \label{i:radiidisjoint}		for every $0\leq m<n$ and $(x',y')\in \widetilde Q_{m}$,
		\[
		d((x,y),(x',y'))>r_m-r_n\quad  \Longrightarrow\quad  d((x,y),(x',y'))>r_m+r_n,
		\] 
		\item \label{i:radiisphere}and $\partial B((x,y),r_n)$ consists of points with at least one irrational coordinate.
	\end{enumerate}
\end{lemma}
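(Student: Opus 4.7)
The plan is to define the radii $r_n$ inductively, choosing each $r_n$ small enough to secure~(\ref{i:radiimn})--(\ref{i:radiidisjoint}) at the current level and generic enough, i.e.\ outside a designated countable set, to secure~(\ref{i:radiisphere}) and to keep future inductive steps feasible. The starting observation is that each $\widetilde Q_n$ is a finite subset of a dyadic grid and, by~\eqref{qndelta}, is disjoint from the closed set $\Delta_n$; since $\partial M_n\subset \Delta_n$, the quantity $\alpha_n:=d(\widetilde Q_n,\Delta_n)$ is strictly positive, and any $r_n<\alpha_n$ takes care of both~(\ref{i:radiimn}) and~(\ref{i:radiidelta}) simultaneously.

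Next I would analyze the failure mode of~(\ref{i:radiidisjoint}). For fixed $m<n$ and a pair $(x,y)\in\widetilde Q_n$, $(x',y')\in\widetilde Q_m$ at distance $d$, the implication breaks exactly when $d\in(r_m-r_n,r_m+r_n]$; provided $d\neq r_m$, any $r_n<|d-r_m|$ avoids this. So (\ref{i:radiidisjoint}) reduces to (a) ensuring $r_m\neq d((x,y),(x',y'))$ for every $m<n$ and every relevant pair, and (b) bounding $r_n$ above by $\beta_n:=\min|d-r_m|$ taken over those pairs. The induction is then straightforward: take $r_0>0$ arbitrary (all conditions are vacuous since $\widetilde Q_0=\emptyset$), and at step $n$ set $\rho_n:=\min(r_{n-1},\alpha_n,\beta_n)>0$ and pick $r_n\in(0,\rho_n)$ outside a countable \emph{bad set} consisting of the distances $d((x,y),(x',y'))$ for $(x,y)\in\widetilde Q_k$ with $k>n$ and $(x',y')\in\widetilde Q_n$ (to keep $\beta_k>0$ at all future stages) together with the distances $d((x,y),(p,q))$ for $(x,y)\in\widetilde Q_n$ and $(p,q)\in \mathbb{Q}^2$ (to ensure~(\ref{i:radiisphere}), understood as the requirement that $S((x,y),r_n)$ meets no rational point). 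Both sets are countable, so their union is countable and $(0,\rho_n)$ minus it is nonempty, indeed dense.

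The only real obstacle is the cross-level coupling: (\ref{i:radiidisjoint}) at step $n$ is a constraint on the already-chosen $r_m$ for $m<n$, which we cannot revisit. This is resolved by the foresight built into the construction above: each $r_m$ is picked outside a single countable bad set that packages both the current~(\ref{i:radiisphere}) and all future~(\ref{i:radiidisjoint}) constraints coming from pairs involving $\widetilde Q_m$. With this device in place, $\beta_n>0$ holds automatically at every step, and the rest is routine bookkeeping built on the finiteness of each $\widetilde Q_n$ and the disjointness~\eqref{qndelta}.
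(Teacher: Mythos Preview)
Your proof is correct and follows the same inductive scheme as the paper: set $r_0$ arbitrarily (since $\widetilde Q_0=\emptyset$), then at step $n$ take $r_n$ small enough for (\ref{i:radiimn})--(\ref{i:radiidisjoint}) and generic enough for (\ref{i:radiisphere}).

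One remark: what you identify as ``the only real obstacle''---the foresight of adding future distances $d((x,y),(x',y'))$ with $(x,y)\in\widetilde Q_k$, $k>n$, to the bad set---is in fact redundant with your own (\ref{i:radiisphere}) constraint. Every point of every $\widetilde Q_k$ has both coordinates rational, so these distances are already among the $d((x,y),(p,q))$ with $(p,q)\in\mathbb{Q}^2$ that you exclude anyway. This is exactly how the paper organizes the argument: it does not introduce a separate ``future distances'' bad set, but simply observes that condition~(\ref{i:radiisphere}) at level $m<n$ forces $d((x,y),(x',y'))\neq r_m$ for any $(x,y)\in\widetilde Q_n$ and $(x',y')\in\widetilde Q_m$, since $(x,y)$ has rational coordinates and hence cannot lie on $S((x',y'),r_m)$. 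So your $\beta_n$ is automatically positive by the induction hypothesis on~(\ref{i:radiisphere}), and no extra bookkeeping is required.
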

\begin{proof}
	We proceed by induction on $n\geq 0$; we will choose all radii $r_n$ to be trascendental numbers. First, we may set $r_0$ arbitrarily because $\widetilde Q_0$ is empty. Assume now that we have chosen $r_0,\ldots,r_{n-1}$ satisfying the above conditions, then (\ref{i:radiimn})--(\ref{i:radiidelta}) hold for $r_n$ small enough because of~\eqref{qndelta}. To prove that~(\ref{i:radiidisjoint}) holds for small enough radii, note that every $\widetilde Q_m$ is a finite set consisting of points with rational coordinates, so $d((x,y),\widetilde Q_m)$ is an algebraic number for every $(x,y)\in \widetilde Q_n$ and $0\leq m <n$. Since the $r_m$ are transcendental numbers,~(\ref{i:radiidisjoint}) is satisfied for small $r_n$. Finally, we can choose $r_n$ transcendental and satisfying (\ref{i:radiisphere}) because there are countably many points with both coordinates rational but uncountably many transcendental radii satisfying (\ref{i:radiimn})--(\ref{i:radiidisjoint}).
\end{proof}

Let 
\begin{align}\label{un}
	\mathcal U_n&:=\bigcup_{(x,y)\in \widetilde Q_n} B((x,y),r_{n}),\\
	\mathcal{V}_n&:=\label{defnutilde} \bigcup_{0\leq m \leq n} \mathcal{U}_n.
\end{align}
By Lemma~\ref{l:radii}(\ref{i:radiidisjoint}), we can express $\mathcal{V}_n$ as a disjoint union of open balls. It follows that  $\mathcal{V}_n\cap M_l$ is not dense in $M_l$ for any $n,l\geq 0$.

We are finally prepared to define what will be the holonomy pseudogroup of our counterexample foliated space. Let $\tilde f$ and $\tilde g$ be maps on $Y$ defined by
\begin{align}\label{defntildef}
	\tilde f((x,y),n)&=(T_n(x,y),n),\quad 	&\dom \tilde f &=\{((x,y),n)\mid (x,y)\notin \Delta_n\},\\ 
	\label{defntildeg}\tilde g((x,y),n)&=((x,y),n+1),\quad 	&\dom \tilde g& =\{((x,y),n)\mid (x,y)\notin\mathcal{V}_n\},
\end{align}
and let $\widetilde{G}\acts Y$ be the pseudogroup generated by $\tilde f$ and $\tilde g$.

\begin{lemma}
	The pseudogroup $\widetilde{\mathcal{G}}\acts Y$  is topologically transitive.
\end{lemma}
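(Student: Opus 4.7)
My plan is to verify topological transitivity directly: for any non-empty open $U, V \subset Y$, I will produce $\gamma \in \widetilde{\mathcal{G}}$ and $p \in U \cap \dom \gamma$ with $\gamma(p) \in V$. After reducing to $U = U_1 \times \{n_U\}$ and $V = V_1 \times \{n_V\}$ for non-empty open $U_1, V_1 \subset \mathbb{T}^2$, the key geometric observation is that $\bigcup_{n \geq 0} M_n$ equals $\mathbb{T}^2$ minus a subset of the circle $\{0\} \times \mathbb{T}$ and is hence dense in $\mathbb{T}^2$, so I can pick $N \geq \max(n_U, n_V)$ large enough that both $U_1 \cap \mathrm{int}(M_N)$ and $V_1 \cap \mathrm{int}(M_N)$ are non-empty.

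The natural candidate element is $\gamma := \tilde g^{-(N - n_V)} \circ \tilde f^{k} \circ \tilde g^{N - n_U}$. Unraveling the domain conditions for $\tilde f$ and $\tilde g$ and using that $\mathcal V_n$ is increasing in $n$, this $\gamma$ is defined at $(x, n_U)$ precisely when (a) $x \notin \mathcal V_{N-1}$, (b) the $T_N$-orbit $x, T_N x, \ldots, T_N^{k-1} x$ avoids $\Delta_N$, and (c) $T_N^{k}(x) \notin \mathcal V_{N-1}$; in that case $\gamma(x, n_U) = (T_N^{k}(x), n_V)$. Set $W := \mathrm{int}(M_N) \setminus \overline{\mathcal V_{N-1}}$, which is non-empty and open in $M_N$ because $\mathcal V_{N-1} \cap M_N$ is not dense in $M_N$. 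If both $U_1 \cap W$ and $V_1 \cap W$ are non-empty, then topological transitivity of $T_N$ on $M_N$ from Theorem~\ref{t:twisttt} yields $x \in U_1 \cap W$ and $k \geq 0$ with $T_N^{k}(x) \in V_1 \cap W$, establishing (a) and (c); condition (b) is then arranged by shrinking to a dense $G_\delta$ subset of generic points, since $\bigcup_{i \geq 0} T_N^{-i}(\Delta_N)$ is meager in $M_N$ by Baire.

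The step I expect to be the main obstacle is the case where $U_1 \cap \mathrm{int}(M_N)$ happens to lie inside $\overline{\mathcal V_{N-1}}$ for every admissible $N$, so that $U_1 \cap W$ is empty and condition (a) cannot be fulfilled at level $n_U$. I plan to circumvent this by using a longer alternating composition, exploiting the containment $\mathcal V_n \subset M_n$ from Lemma~\ref{l:radii}(\ref{i:radiimn}): a $\tilde g$ step from level $n$ to $n+1$ is freely applicable whenever the underlying point lies outside $M_n$, and if the point does lie in $M_n$, then the topologically transitive twist $T_n$ is available to steer it into the non-empty open set $M_n \setminus \overline{\mathcal V_{N-1}}$. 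Letting $n^*$ denote the smallest $n \geq n_U$ with $U_1 \cap M_n \neq \emptyset$, I will replace $\gamma$ by a composition of the form $\tilde g^{-(\cdots)} \tilde f^{(\cdots)} \tilde g^{N - n^*} \tilde f^{k^*} \tilde g^{n^* - n_U}$, where $\tilde f^{k^*}$ at level $n^*$ carries an open subset of $U_1 \cap M_{n^*}$ into $M_{n^*} \setminus \overline{\mathcal V_{N-1}}$; a symmetric modification near the $V$-end handles condition (c). Assembling these ingredients produces the required element of $\widetilde{\mathcal G}$ and completes the proof.
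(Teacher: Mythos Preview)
Your plan is sound and rests on the same three ingredients the paper uses: the transitivity of each $T_n$ on $M_n$ (Theorem~\ref{t:twisttt}), the containment $\mathcal V_n\subset M_n$ from Lemma~\ref{l:radii}(\ref{i:radiimn}), and a Baire argument to dodge the singular set $\Delta$. The organization, however, is genuinely different. The paper proves \emph{point} transitivity: it builds a single residual set
\[
R=\Bigl(\bigcap_{n,z}T_0^z(R_n)\Bigr)\setminus\Bigl(\bigcup_{n,z}T_n^z(\Delta)\Bigr)\subset M_0
\]
and shows that every $((x,y),0)$ with $(x,y)\in R$ has a dense $\widetilde{\mathcal G}$-orbit. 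All genericity (dense $T_n$-orbit for every $n$, avoidance of $\Delta$ along every $T_n$-orbit) is baked into $R$ once, and the path to any target $V\times\{n\}$ is then a single word $\tilde g^{n-m}\tilde f^{z_2}\tilde g^{m}\tilde f^{z_1}$.

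Your direct-transitivity route works but demands more care than the sketch provides at one point. In the modified word $\tilde g^{\,\cdots}\tilde f^{k}\tilde g^{N-n^*}\tilde f^{k^*}\tilde g^{n^*-n_U}$ you need the image $T_{n^*}^{k^*}(x)$ to be a \emph{generic} point for $T_N$ (dense $T_N$-orbit, avoiding $\Delta_N$), not merely to lie in the open set $M_{n^*}\setminus\overline{\mathcal V_{N-1}}$. Dense orbits only guarantee hitting open sets, not residual ones, so you must first refine the choice of $x$: take $x$ in the residual set $\bigcap_{k\in\mathbb Z}T_{n^*}^{-k}(R_N')$, where $R_N'\subset M_N$ is the residual set of $T_N$-generic points. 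This is exactly the kind of nested residual-set bookkeeping that the paper's construction of $R$ handles in one stroke. Also, your phrasing ``condition (b) is then arranged by shrinking'' is in the wrong order: you should pick $x$ generic first, then choose $k$; once $x$ and $k$ are fixed there is nothing left to shrink. None of this is fatal, but it is worth noting that the paper's point-transitivity packaging is what makes the argument short.
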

\begin{proof}
	Since $T_n$ is topologically transitive on $M_n$ for every $n\geq 0$, there are residual sets $R_n\subset M_n$  consisting of points whose $T_n$-orbits are dense in $M_n$; hence, the fact that $\Delta$ is meager yields that
	\[
	R:=\left(\bigcap_{n\geq 0, z\in\mathbb{Z}} T^{z}_0(R_n) \right)\setminus \left(\bigcup_{n\geq 0, z\in\mathbb{Z}} T^{z}_n(\Delta) \right)
	\] 
	is a residual subset of $M_0$ satisfying 
	\begin{equation}\label{deltar}
		\Delta\cap \bigcup_{n\geq 0,z\in\mathbb{Z}} T_n^z(R)=\emptyset,
	\end{equation}
	\begin{equation}\label{r0r}
		(x,y)\in R_0\qquad \Longrightarrow\qquad  T_0^z(x,y)\in R_n \qquad \text{for every}\ n\geq 0, z\in\mathbb{Z}.
	\end{equation}

	We will prove that any point $((x,y),0)\in Y$  with $(x,y)\in R$ has a dense $\widetilde G$-orbit. Consider an open set $V\times\{n\}$ with $V\subset \mathbb{T}^2$ and $n\geq 0$, then there is a least $m\geq n$ such that $V\cap M_m\neq \emptyset$. Since $ \mathcal{V}_l\cap M_0$ is not dense in $M_0$ for any $l\in \mathbb{N}$ and $T_0(x,y)$ is dense in $M_0$, there is $z_1\in \mathbb{Z}$ such that 
	\[
	T_0^{z_1}(x,y)\notin \mathcal{V}_l\qquad \text{for}\ l=0,\ldots,m.
	\]
	This means that $((x,y),0)\in\dom\tilde g^m$ by~\eqref{defntildeg}.
	Equations~\eqref{deltar} and~\eqref{r0r}
	now yield
	\[
	T_0^{z_1}(x,y)\in R_m \setminus \bigcup_{z\in \mathbb{Z}}T_m^{z}(\Delta),
	\]
	and therefore
	\[
	\tilde g^m\tilde f^{z_1}(x,y)\in (R_m \setminus \bigcup_{z\in \mathbb{Z}}T_m^{z}(\Delta))\times \{m\}.
	\]
	By the definition of $R_m$, the orbit
	\[\bigcup_{z\in\mathbb{Z}}T_m^z(T_0^{z_1}(x,y))\] is dense in $M_m$ and disjoint from $\Delta$, so, by~\eqref{defntildef}, there is $z_2\in\mathbb{Z}$ such that 
	\[
	(x',y'):=\tilde f^{z_2}\tilde g^m\tilde f^{z_1}(x,y)\in (V\cap M_m)\times \{m\}.
	\] 
	
	We defined $m$ as the least integer $\geq n$ satisfying $V\cap M_m\neq \emptyset$, so we have $V\cap M_l=\emptyset$ for every $n\leq l<m$. Hence $(x',y')\in \dom (\tilde g^{n-m})$ by~\eqref{defntildeg} and Lemma~\ref{l:radii}(\ref{i:radiimn}), yielding
	\[
	\tilde g^{n-m}\tilde f^{z_2}\tilde g^m\tilde f^{z_1}(x,y)\in V\times \{n\}.
	\]
	We have proved that, for $(x,y)\in R$, the $\widetilde{\mathcal{G}}$-orbit of $((x,y),0)$ meets every open set, so $\widetilde{\mathcal{G}}\acts Y$ is topologically transitive by Lemma~\ref{l:pt}.
\end{proof}

\begin{lemma}\label{l:finiteorbits}
	Let $((x,y),m)\in \widetilde Q_n\times\{m\}$ with $m\leq n$, then the orbit $\widetilde{\mathcal{G}}((x,y),m)$ is contained in $ \widetilde Q_n\times\{0,\ldots,n\}$. 
\end{lemma}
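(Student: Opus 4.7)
The plan is to prove the containment by induction on the length of a word in the maps $\tilde f^{\pm 1}, \tilde g^{\pm 1}$ needed to reach a given point of the orbit from $((x,y),m)$. By Lemma~\ref{l:generate} applied to the generating set $\{\tilde f,\tilde g\}$, every element of $\widetilde{\mathcal{G}}$ locally coincides with such a finite composition (up to restriction), so it suffices to verify that each of the four basic generators sends any point of $\widetilde Q_n \times \{0,\ldots,n\}$ lying in its domain back into $\widetilde Q_n \times \{0,\ldots,n\}$. The starting point lies in this set by hypothesis.

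First I would dispatch $\tilde f^{\pm 1}$: for any $(x',y')\in \widetilde Q_n$ and any admissible $m'$, equation~\eqref{qndelta} gives $(x',y')\notin \Delta \supset \Delta_{m'}$, so $((x',y'),m')\in\dom\tilde f$, and equation~\eqref{tildeq} together with the bijectivity of $T_{m'}$ yields $T_{m'}^{\pm 1}(x',y')\in \widetilde Q_n$. Since $\tilde f^{\pm 1}$ leaves the second coordinate untouched, the image remains in $\widetilde Q_n \times \{0,\ldots,n\}$.

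Next I would handle $\tilde g^{\pm 1}$, which preserves the first coordinate, so the condition $(x',y')\in\widetilde Q_n$ is maintained automatically. Since the ambient space is $\mathbb{T}^2\times\mathbb{N}$, the map $\tilde g^{-1}$ can only be defined when $m'\geq 1$, and then produces second coordinate $m'-1\in\{0,\ldots,n-1\}$, so that case is immediate. For $\tilde g$, I must rule out any image with second coordinate $n+1$, which amounts to checking that $\tilde g$ is \emph{undefined} on $\widetilde Q_n\times\{n\}$. This will follow from the inclusions $\widetilde Q_n\subset \mathcal{U}_n\subset \mathcal{V}_n$ given by definitions~\eqref{un}--\eqref{defnutilde} (every point of $\widetilde Q_n$ is the center of one of the balls making up $\mathcal{U}_n$), combined with the definition of $\dom\tilde g$ in~\eqref{defntildeg}.

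The only genuine obstacle is this last exclusion: the radii supplied by Lemma~\ref{l:radii} and the nested definitions of $\mathcal{U}_n$ and $\mathcal{V}_n$ are precisely engineered so that $\widetilde Q_n\subset \mathcal{V}_n$, trapping the orbit within the slab $\mathbb{T}^2\times\{0,\ldots,n\}$. Once that observation is in place, the remainder is a routine case analysis over the four basic generators, closed off by the inductive step on word length.
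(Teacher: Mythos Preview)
Your proof is correct and follows essentially the same approach as the paper: both arguments verify that each of the four generators $\tilde f^{\pm1}, \tilde g^{\pm1}$ preserves the set $\widetilde Q_n\times\{0,\ldots,n\}$, with the only nontrivial point being that $\tilde g$ is undefined on $\widetilde Q_n\times\{n\}$ because $\widetilde Q_n\subset\mathcal U_n\subset\mathcal V_n$. Your write-up is slightly more explicit (framing it as induction on word length and checking the domain condition for $\tilde f$ via~\eqref{qndelta}), but the substance is identical.
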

\begin{proof}
	We have $T_l(\widetilde Q_n)=\widetilde Q_n$ for every $0\leq l\leq n$ by~\eqref{tildeq}, so 
	\[
	\tilde f(\bigcup_{m\leq n} \widetilde Q_n\times\{m\})\subset \bigcup_{m\leq n} \widetilde Q_n\times\{m\},
	\]
	and similarly for $\tilde f^{-1}$. It is obvious from the definition of $\tilde g$ that
	\[
	\tilde g^{-1}(\bigcup_{m\leq n}\widetilde Q_n\times\{m\})\subset \bigcup_{m\leq n}\widetilde Q_n\times\{m\}, \quad \tilde g(\widetilde Q_n\times\{m\})\subset \bigcup_{m\leq n}\widetilde Q_n\times\{m+1\}.
	\]
	Hence, to finish the proof it is enough to show that $(\widetilde Q_n\times\{n\})\cap \dom \tilde g=\emptyset$, but this follows from~\eqref{un},~\eqref{defnutilde} and~\eqref{defntildeg}.
\end{proof}

\begin{corollary}
	The finite $\widetilde{\mathcal{G}}$-orbits are dense in $Y$.
\end{corollary}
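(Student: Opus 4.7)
The plan is to exhibit an explicit dense set of finite-orbit points in $Y$. By Lemma~\ref{l:finiteorbits}, every point of $\widetilde Q_n\times\{m\}$ with $n\geq m$ has finite $\widetilde{\mathcal{G}}$-orbit, and since the slices $\mathbb{T}^2\times\{m\}$ form a clopen partition of $Y$, it is enough to prove that for each $m\geq 0$ the set
\[
A_m:=\bigcup_{n\geq m}\widetilde Q_n
\]
is dense in $\mathbb{T}^2$.

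The first and only geometric step is to verify that $\bigcup_{n}M_n$ is dense in $\mathbb{T}^2$. From~\eqref{defnlr}, the families $\{V_n^-\}_{n\geq 1}$ and $\{V_n^+\}_{n\geq 1}$ of closed vertical strips accumulate onto the circle $\{x=0\}$ from the right and from the left respectively (with the endpoints consistent with~\eqref{defntvm}), so that together with $V_0=[1/6,5/6]\times\mathbb{T}$ they cover $\mathbb{T}^2\setminus(\{0\}\times\mathbb{T})$. Adding $H$, which sits inside every $M_n$, one obtains
\[
\bigcup_n M_n = \mathbb{T}^2\setminus\bigl(\{0\}\times([0,1/8)\cup(7/8,1])\bigr),
\]
which is dense in $\mathbb{T}^2$ and has dense interior. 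Given any nonempty open $V\subset\mathbb{T}^2$, one then picks $n_0$ with $V\cap\mathrm{int}(M_{n_0})\neq\emptyset$; for every $n\geq n_0$ this open set is contained in $\mathrm{int}(M_n)$, and since the set of dyadic points of denominator $2^n$ becomes uniformly $\mathbb{T}^2$-dense as $n\to\infty$, for all sufficiently large $n$ one such point lies in $V\cap\mathrm{int}(M_n)$ and hence in $Q_n$. Thus $\bigcup_n Q_n$ is dense in $\mathbb{T}^2$.

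To conclude, observe that $Q_{n-1}\subset Q_n$ for every $n$ (because $M_{n-1}\subset M_n$ and every dyadic point of denominator $2^{n-1}$ is also dyadic of denominator $2^n$), so the sets $\widetilde Q_n=Q_n\setminus Q_{n-1}$ form a disjoint decomposition of $\bigcup_n Q_n$ and $A_m=(\bigcup_n Q_n)\setminus Q_{m-1}$. Since $Q_{m-1}$ is finite and $\mathbb{T}^2$ is a perfect space, removing the finite set $Q_{m-1}$ from the dense set $\bigcup_n Q_n$ leaves a dense subset, so $A_m$ is dense and the corollary follows. The only nontrivial point of the proof is the geometric density of $\bigcup_n M_n$; everything else is elementary bookkeeping with dyadic rationals.
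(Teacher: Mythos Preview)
Your proof is correct. The paper states this corollary without proof, treating it as an immediate consequence of Lemma~\ref{l:finiteorbits}; your argument carefully supplies the details the paper leaves implicit---namely, that $\bigcup_n M_n$ is dense in $\mathbb{T}^2$, that dyadic points of denominator $2^n$ eventually land in any open subset of $\operatorname{int}(M_{n_0})$, and that removing the finite set $Q_{m-1}$ preserves density. The approach is exactly the intended one.
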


The proof of the following statement is identical to that of Proposition~\ref{p:gmathcalg}.

\begin{proposition}\label{p:widetildegnotsic}
	There is a metric $d$ on $Y$ and a generating pseudo{\textasteriskcentered}group $\mathcal{S}$ such that every map in $\mathcal{S}$ whose domain contains $((0,0),0)$ is an isometry. Hence, $\widetilde{\mathcal{G}}$ is not sensitive to initial conditions.
\end{proposition}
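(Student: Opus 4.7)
The plan is to repeat, almost verbatim, the second half of the proof of Proposition~\ref{p:gmathcalg} with the pair $(\tilde f,\tilde g)$ in place of $(\sigma,\tau)$. I start by choosing a compatible metric $d$ on $Y$ that restricts to the standard flat metric on every slice $\mathbb{T}^2\times\{n\}$ and that separates distinct slices by a uniform positive distance, so in particular $n$-shifts along $\{n\}\mapsto\{n+1\}$ that fix the $\mathbb{T}^2$-coordinate are isometries. The essential geometric input is that $(0,0)\in\mathbb{T}^2$ lies outside every $M_n$ (the horizontal annulus $H$ and the vertical annuli $V_0,V_n^{\pm}$ all avoid the corner) and outside every $\mathcal V_n$ (the balls forming $\mathcal V_n$ are centered at points of $Q_n\subset M_n$ with radii chosen by Lemma~\ref{l:radii}(\ref{i:radiimn}) to remain inside $M_n$). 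Thus $\tilde f$ acts as the identity near $((0,0),n)$ and $\tilde g$ is defined there, so the $\widetilde{\mathcal{G}}$-orbit of $((0,0),0)$ is contained in $\{((0,0),n)\mid n\geq 0\}$.

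Following the original construction, for each $n$ I fix open neighborhoods $U_n\subset O_n$ of $(0,0)$ in $\mathbb{T}^2$ with $\overline{U_n}\subset O_n$ and $\overline{O_n}\cap(M_{n+1}\cup \mathcal V_{n+1})=\emptyset$; such neighborhoods exist since $(0,0)$ is an interior point of the open set $\mathbb{T}^2\setminus(M_{n+1}\cup\mathcal V_{n+1})$, and they guarantee both that $\tilde g$ is defined on $O_n\times\{n\}$ and that $\tilde f$ acts as the identity on $O_n\times\{n\}$ as well as on its $\tilde g$-image. Setting
\[
U=\bigcup_{n\geq 0}(\mathbb{T}^2\setminus U_n)\times\{n\},\qquad O=\bigcup_{n\geq 0}O_n\times\{n\},
\]
I let $\mathcal S$ be the pseudo{\textasteriskcentered}group generated by $\{\tilde f',\ \tilde g|_U,\ \tilde g|_O\}$ together with their inverses, where $\tilde f'$ is the restriction of $\tilde f$ to $\{((x,y),n)\mid (x,y)\in\mathrm{int}(M_n)\setminus\Delta_n\}$.

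I then verify, via Lemma~\ref{l:generate}, that $\mathcal S$ generates $\widetilde{\mathcal{G}}$: the map $\tilde f$ locally coincides with $\tilde f'$ on points of $\mathrm{int}(M_n)\setminus\Delta_n$, with $\id_Y\in\mathcal S^{*}$ on points outside $\overline{M_n}$, while $\dom\tilde g$ is covered by $U\cup O$ because every $(x,y)\in\mathbb{T}^2\setminus\mathcal V_n$ either lies in $O_n$ or outside $\overline{U_n}$. Once generation is established, the orbit $\{((0,0),n)\mid n\geq 0\}$ misses $\dom\tilde f'$ (as $(0,0)\notin M_n$) and misses $\dom\tilde g|_U$ (as $(0,0)\in U_n$), so the only generators of $\mathcal S$ whose domain intersects the orbit are $\tilde g|_O$ and $(\tilde g|_O)^{-1}$. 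Both are isometries for $d$, as they merely shift rigidly between two flat slices, so every map in $\mathcal S$ defined at $((0,0),0)$ is a local isometry, which precludes any sensitivity constant for the pair $(\mathcal S,d)$.

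The main obstacle I anticipate is not a single hard step but this last bookkeeping of why $\mathcal S$ generates $\widetilde{\mathcal G}$, where one has to handle simultaneously the non-smooth locus $\Delta_n$ of $\tilde f$, the holes $\mathcal V_n$ cut out from $\dom\tilde g$, and the interaction with the covering $\{U,O\}$. Every other piece is a direct transcription of Proposition~\ref{p:gmathcalg}.
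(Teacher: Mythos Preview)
Your proposal is correct and follows exactly the approach the paper intends: the paper's proof of Proposition~\ref{p:widetildegnotsic} consists solely of the remark that it is ``identical to that of Proposition~\ref{p:gmathcalg},'' and you have carried out precisely that transcription, with the natural adjustments for the partial maps $\tilde f,\tilde g$ and a bit of extra care (taking $\mathrm{int}(M_n)$ for open domains and checking generation via Lemma~\ref{l:generate}) that the paper leaves implicit.
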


\subsection{A non-compact, topologically transitive affine foliation with a dense set of compact leaves which is not sensitive}
\label{s:ttdclnotsicaffine}
We are now in position to prove Theorem~\ref{t:counterfol} by constructing a suitable foliated space using the pseudogroup we have just defined.

Let $\Sigma$ be a smooth surface of genus two  divided into three smooth manifolds with boundary, $\Sigma_0$, $\Sigma_\alpha$, and $\Sigma_\beta$,
that overlap only on their boundaries. Let $\Sigma_0$ be a two-sphere with four open disks removed, and denote the  boundary circles by $S_\alpha^-$, $S_\alpha^+$, $S_\beta^-$, $S_\beta^+$, with $\Sigma_\alpha$ attaching at $S_\alpha^-$ and $S_\alpha^+$ (see Figure~\ref{f:doubletorus}).

\begin{figure}[tbh]
	\includegraphics[width=0.6\textwidth]{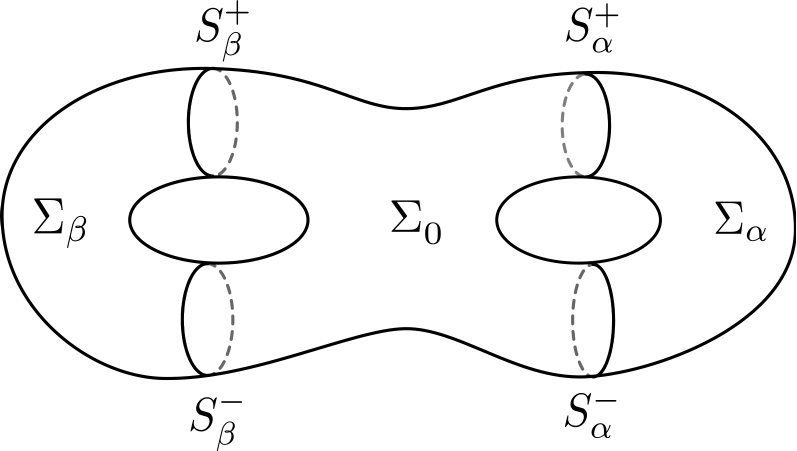}
	\caption{The surface $\Sigma$ and its partition}
	\label{f:doubletorus}
\end{figure}

The main idea of the construction is as follows: We have constructed a pseudogroup $\widetilde{\mathcal{G}}\acts Y$ generated by two maps $\tilde f$ and $\tilde g$  satisfying suitable properties. In order to get a foliated manifold realizing this dynamics, the obvious candidate is to mimic a suspension foliation by taking products $\Sigma_\alpha\times \dom \tilde f$, $\Sigma_\beta\times \dom \tilde g$ and $\Sigma_0\times Y$, and attaching each plaque in $\Sigma_\alpha\times \dom \tilde f$ so that we identify $(x,y)\in\Sigma_\alpha\times \dom \tilde f$ and $(x,y)\in\Sigma_0\times \dom \tilde f$ if $x\in S_\alpha^-$ and  we identify $(x,y)\in\Sigma_\alpha\times \dom \tilde f$ and $(x,\tilde f(y))\in\Sigma_0\times \im \tilde f$ if $x\in S_\alpha^+$. In this way, the plaques of $\Sigma_\alpha\times \dom \tilde f$ realize $\tilde f$ in the holonomy pseudogroup. Proceeding similarly, we realize $\tilde g $ using $\Sigma_\beta\times \dom \tilde g$. However, now there are singularities at the points \[(x,y)\in (S_\alpha^-\times \partial\dom \tilde f)\sqcup (S_\alpha^+\times \partial\im \tilde f),\] and similarly for $\tilde g$, so the resulting space would not even be a manifold with boundary. Since we do not require a compact foliated manifold, we may get rid of the problematic points and consider $\mathfrak Y_0:= (\Sigma_0\times Y)\setminus \mathfrak{C}$, where 
\begin{align*}
	\mathfrak{C}&=\mathfrak{C}_\alpha^-\cup \mathfrak{C}_\alpha^+\cup \mathfrak{C}_\beta^-\cup \mathfrak{C}_\beta^+,\\
	\mathfrak{C}_\alpha^-&= S_\alpha^-\times \partial\dom \tilde f,\\
	\mathfrak{C}_\alpha^+&= S_\alpha^+\times \partial \im \tilde f,\\
	\mathfrak{C}_\beta^-&= S_\beta^-\times \partial \dom \tilde g,\\
		\mathfrak{C}_\beta^+&= S_\beta^+\times \partial \im \tilde g.
\end{align*}

Recall that $\dom \tilde f$ is a dense open subset of $Y$ and $\dom \tilde g$ is the complement of a disjoint union of open balls. Now $\mathfrak Y_0$ is a non-compact smooth manifold with boundary. 

Now attach the boundaries of $\mathfrak Y_\alpha:=\Sigma_\alpha\times \dom \tilde f$ and $\mathfrak Y_\beta:=\Sigma_\beta\times \dom \tilde g$ to $\mathfrak Y_0$ using the identifications
\begin{align*}
(s,y)&\sim(s,y),\quad &(s,y)\in  S_\alpha^-\times\dom \tilde f,\\
(s,y)&\sim (s,\tilde f(y)),\quad&(s,y)\in  S_\alpha^+\times\dom \tilde f,\\
(s,y)&\sim (s,y),\quad&(s,y)\in  S_\beta^-\times\dom \tilde g,\\
(s,y)&\sim (s,\tilde g(y)),\quad&(s,y)\in  S_\beta^+\times\dom \tilde g.\\
\end{align*}

Denote by $\mathfrak{Y}$ the resulting space. The product foliated structures on $\mathfrak Y_0$, $\mathfrak Y_\alpha$, and $\mathfrak{Y}_\beta$ with leaves $\{y\}\times \Sigma_i$ (where $i=0$, $\alpha$, or $\beta$, respectively) descend to  $\mathfrak{Y}$. 
It is an elementary matter to check that, essentially by construction, $\mathfrak{Y}$ is $C^\infty$ and its holonomy pseudogroup is equivalent to $\widetilde{\mathcal{G}}\acts Y$. Indeed, we can choose a point $x$ in the interior of $\Sigma_0$, and then $\{x_0\}\times Y$ is a total transversal of $\mathfrak{Y}$. Since $\mathfrak{Y}_0$ has a product foliation structure, it has trivial dynamics; by gluing $\mathfrak{Y}_\alpha$ and $\mathfrak{Y}_\beta$, we realize $\tilde f$ and $\tilde g$, respectively, so the holonomy pseudogroup is equivalent to $\widetilde{\mathcal{G}}\acts Y$.

\begin{corollary}\label{c:mathfraky}
	The foliated space $\mathfrak{Y}$ is $C^\infty$, transversally affine and  topologically transitive, but not sensitive to initial conditions.
\end{corollary}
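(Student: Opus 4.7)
The plan is to deduce each claimed property of $\mathfrak Y$ from the corresponding property of its holonomy pseudogroup, which by construction is equivalent to $\widetilde{\mathcal G}\acts Y$. The smoothness and the equivalence were already noted informally just before the statement, so the task is really to make both claims precise and then transfer topological transitivity and non-sensitivity across the equivalence.

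For the smooth and transversally affine structure, I would give $\mathbb T^2\times\mathbb N$ the obvious piecewise affine structure lifted from $\mathbb R^2\times\mathbb N$ and check that both generators of $\widetilde{\mathcal G}$ are smooth affine partial homeomorphisms on their respective domains. The map $\tilde g$ is by definition the identity from (an open subset of) $\mathbb T^2\times\{n\}$ into $\mathbb T^2\times\{n+1\}$, so it is trivially smooth and affine. The map $\tilde f$ is the restriction of the linked twist $T_n$ to $\mathbb T^2\setminus \Delta_n$, where $T_n$ is piecewise linear by~\eqref{defnth} and~\eqref{defntvm} and $\Delta_n$ is precisely the singular locus, so $\tilde f$ is smooth and affine on its domain. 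Since the foliated atlas on $\mathfrak Y$ is built by gluing product charts of the form $\Sigma_i\times U$ (with $U\subset Y$ open) using these two generators, the resulting foliated space is $C^\infty$ and transversally affine.

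Topological transitivity of $\mathfrak Y$ then follows from topological transitivity of $\widetilde{\mathcal G}$ proved in Section~\ref{s:affinepseudogroup}, because topological transitivity is straightforwardly invariant under pseudogroup equivalences (via Lemma~\ref{l:pt}, an equivalence carries a dense $\widetilde{\mathcal G}$-orbit to an $\mathcal H$-orbit whose union with the $\Phi$-images exhausts an open cover of the target, hence is dense). Non-sensitivity is obtained by combining Proposition~\ref{p:widetildegnotsic}, which exhibits a generating pseudo{\textasteriskcentered}group and a metric under which every germ at $((0,0),0)$ is an isometry, with the invariance of sensitivity to initial conditions under equivalences established in Theorem~\ref{t:invariant} (or Corollary~\ref{c:sensitive} directly).

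The main obstacle, and the only substantive check, is the bookkeeping at the gluings of $\mathfrak Y_0$, $\mathfrak Y_\alpha$, and $\mathfrak Y_\beta$: one needs to confirm that removing exactly the set $\mathfrak C$ from $\Sigma_0\times Y$ and restricting the transversal factors of $\mathfrak Y_\alpha$, $\mathfrak Y_\beta$ to $\dom\tilde f$ and $\dom\tilde g$ respectively produces a foliated space with no dangling boundary in its interior, and that the coordinate changes between the induced product charts are exactly the maps $\tilde f$, $\tilde g$ on transversals and the identity on the surface factor. Once this is in place, the remaining dynamical statements are immediate applications of the invariance results already proved for pseudogroups.
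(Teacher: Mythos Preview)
Your proposal is correct and follows essentially the same approach as the paper, which in fact gives no explicit proof of this corollary: the paper simply records (in the sentence immediately preceding the statement) that $\mathfrak Y$ is $C^\infty$ by construction and that its holonomy pseudogroup is equivalent to $\widetilde{\mathcal G}\acts Y$, leaving topological transitivity and non-sensitivity to follow from the results of Section~\ref{s:affinepseudogroup} together with Theorem~\ref{t:invariant}. Your write-up spells out precisely these implicit steps, including the check that $\tilde f$ and $\tilde g$ are affine on their domains, which is the content behind the ``transversally affine'' claim.
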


The space $\mathfrak{Y}$ is also a smooth manifold with boundary and, noticing that we removed $\mathfrak{C}$ from $(\Sigma_0\times Y)\setminus \mathfrak{C}$ and keeping track of which points in the boundary of $\mathfrak{Y}_0$ we glued to $\mathfrak{Y}_\alpha$ and $\mathfrak{Y}_\beta$, we see that the boundary of $\mathfrak{Y}$ is
\begin{equation}\label{eq:Yboundary}
\partial \mathfrak{Y} =   S_\beta^-\times(Y\setminus \overline{\dom \tilde g}) \sqcup  S_\beta^+\times(Y\setminus \overline{\im \tilde g}).
\end{equation}
Recall that $\dom \tilde f$ is an open and dense subset so the terms containing $(Y\setminus \overline{\dom \tilde g})$ are empty; the same argument applies to $\im \tilde f$.

\begin{lemma}\label{l:mathfrakydpo}
	The foliated space
	$\mathfrak{Y}$ has a dense set of leaves that are compact manifolds, possibly with boundary.
\end{lemma}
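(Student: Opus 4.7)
\emph{Plan.} The strategy is to match compact leaves of $\mathfrak{Y}$ with finite $\widetilde{\mathcal{G}}$-orbits and then apply Lemma~\ref{l:finiteorbits}. Since the holonomy pseudogroup of $\mathfrak{Y}$ is equivalent to $\widetilde{\mathcal{G}}\acts Y$, leaves of $\mathfrak{Y}$ correspond to $\widetilde{\mathcal{G}}$-orbits: the leaf $L_{\mathcal{O}}$ associated with an orbit $\mathcal{O}\subset Y$ is assembled combinatorially from one ``core'' piece $(\Sigma_0\times\{z\})\setminus\mathfrak{C}$ for each $z\in\mathcal{O}$, a copy of $\Sigma_\alpha$ joining the cores over $z$ and $\tilde f(z)$ for every $z\in\mathcal{O}\cap\dom\tilde f$, and a copy of $\Sigma_\beta$ joining the cores over $z$ and $\tilde g(z)$ for every $z\in\mathcal{O}\cap\dom\tilde g$.

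First I will show that every finite orbit yields a compact leaf. Since $\Sigma_0,\Sigma_\alpha,\Sigma_\beta$ are themselves compact surfaces and only finitely many of them enter the assembly, this reduces to verifying that no open end is created: for every $z\in\mathcal{O}$, each of the four boundary circles $S_\alpha^\pm,S_\beta^\pm$ at $z$ must either be glued to an adjacent handle (when the corresponding pseudogroup map is defined at $z$) or remain as a piece of $\partial\mathfrak{Y}$ (when it is not), but must not be removed by $\mathfrak{C}$ without replacement. I will restrict attention to orbits issued from points of $\widetilde Q_n\times\{m\}$ with $m\le n$, which by Lemma~\ref{l:finiteorbits} remain inside $\widetilde Q_n\times\{0,\ldots,n\}$. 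Two ingredients then suffice: by~\eqref{qndelta} we have $\widetilde Q_n\cap\Delta=\emptyset$, so $\tilde f,\tilde f^{-1}$ are defined everywhere on the orbit and both $S_\alpha^\pm$-circles are always handle-attached; and by Lemma~\ref{l:radii}(\ref{i:radiisphere}), the topological boundary $\partial\mathcal{V}_{m''}$ consists of points with irrational coordinates, whereas orbit points have dyadic coordinates, so no orbit point lies on $\partial\dom\tilde g$. Consequently each $S_\beta^\pm$-circle at an orbit point is either a handle attachment (when the point is in the exterior of $\mathcal{V}_{m''}$) or a component of $\partial\mathfrak{Y}$ (when it is in the interior), never an unpaired removed boundary.

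Density is then immediate: the set $\bigcup_{m\ge 0}\bigcup_{n\ge m}\widetilde Q_n\times\{m\}$ is dense in $Y$ because, for each fixed $m$, $\bigcup_{n\ge m}\widetilde Q_n$ contains all sufficiently high-denominator dyadic points in $\bigcup_n M_n$, which is dense in $\mathbb{T}^2$. Any foliated chart of $\mathfrak{Y}$ meets the transversal $Y$ in a non-empty open set, so the dense subset above meets it, yielding a compact leaf nearby. The main expected obstacle is the combinatorial bookkeeping in the previous paragraph: all four boundary-circle types and their two admissible fates must be tracked at every orbit point, with Lemma~\ref{l:radii}(\ref{i:radiisphere}) doing the essential work of excluding the intermediate, non-compact-producing case.
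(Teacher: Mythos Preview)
Your proposal is correct and follows essentially the same approach as the paper's proof. Both arguments reduce compactness of the leaf to finiteness of the $\widetilde{\mathcal G}$-orbit via Lemma~\ref{l:finiteorbits}, and both establish that the core plaques over orbit points are compact by showing, via~\eqref{qndelta} and Lemma~\ref{l:radii}(\ref{i:radiisphere}), that the orbit avoids $\Delta$ and $\bigcup_l\partial\mathcal V_l$ (equivalently, $\partial\dom\tilde g\cup\partial\im\tilde g$); your presentation is simply more explicit about the four boundary circles and their two admissible fates, and you spell out the density argument that the paper leaves implicit.
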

\begin{proof}
	By Lemma~\ref{l:finiteorbits}, a leaf $L$ corresponding to a point $((x,y),m)$ with $(x,y)\in Q_n$ corresponds to a finite orbit of $\widetilde{\mathcal G}$. Consider the inverse image of $L$ by the quotient map $\mathfrak{Y}_0\sqcup ( \dom \tilde f\times\Sigma_\alpha) \sqcup (\dom \tilde g\times\Sigma_\beta)\to \mathfrak{Y}$.  	Since it has a finite orbit, the inverse image of $L$ only intersects finitely many plaques in $\mathfrak{Y}_0$, $\Sigma_\alpha \times \dom \tilde f$, and $\Sigma_\beta\times\dom \tilde g$. The plaques in $\Sigma_\alpha \times \dom \tilde f$, and $\Sigma_\beta\times\dom \tilde g$ are all compact, being copies of $\Sigma_\alpha$ and $\Sigma_\beta$, respectively. By Lemma~\ref{l:radii} and~\eqref{un}--\eqref{defntildeg}, 
	\[
	Q_n\cap \bigcup_{l\geq 0}\partial \mathcal{V}_l=Q_n\cap \Delta=\emptyset,
	\] 
	so every $\widetilde{\mathcal{G}}$-orbit of a point $((x,y),m)$ is disjoint from 
 \[\partial\dom \tilde f\cup\partial\im\tilde f\sqcup\partial\dom \tilde g\cup\partial\im\tilde g.\]
	Looking back at the construction of $\mathfrak{Y}_0$ at the beginning of this section, this implies that the plaques in $\mathfrak{Y}_0$ that project to $L$ are all compact, being copies of $\Sigma_0$; $L$ is then the quotient of a finite union of compact plaques, hence compact. 
\end{proof}

At this point, we have constructed a smooth and transitive foliated manifold with boundary $\mathfrak{Y}$ that has a dense set of compact leaves but is not sensitive to initial conditions. It only remains to modify it in order to get rid of the boundary: Take two copies $\mathfrak{Y}^-$ and $\mathfrak{Y}^+$, and let $\mathfrak{Y}^\pm\cong \mathfrak{Y}^-\cup\mathfrak{Y}^+/\sim$ be the quotient space obtained by identifying their boundaries. This is sometimes called the ``double'' of a manifold with boundary, and it is known to admit a smooth structure making it a manifold without boundary. In our case, where we constructed our space $\mathfrak{Y}$ by gluing product manifolds endowed with  product foliations, it is elementary to check that the foliated structure descends to the quotient and $\mathfrak{Y}^\pm$ is now a smooth foliated manifold without boundary.

\begin{lemma}
	The set of compact leaves is dense in $\mathfrak{Y}^\pm$.
\end{lemma}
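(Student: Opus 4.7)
The plan is to reduce this to Lemma~\ref{l:mathfrakydpo} by observing that doubling along the boundary preserves (and can only enlarge) compact leaves.

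First I would set up the basic topology of $\mathfrak{Y}^\pm$. Write $q\colon \mathfrak{Y}^-\sqcup\mathfrak{Y}^+\to\mathfrak{Y}^\pm$ for the quotient map. Then $q$ restricts to a homeomorphism on each copy $\mathfrak{Y}^\pm$ onto its image, and the image of the open set $\mathfrak{Y}\setminus\partial\mathfrak{Y}$ is open in $\mathfrak{Y}^\pm$. So, given any non-empty open set $U\subset \mathfrak{Y}^\pm$, one of the sets $U\cap q(\mathfrak{Y}^\varepsilon\setminus\partial\mathfrak{Y})$ is non-empty for some sign $\varepsilon\in\{-,+\}$, and pulls back to a non-empty open set $U'\subset\mathfrak{Y}\setminus\partial\mathfrak{Y}\subset\mathfrak{Y}$.

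Next I would apply Lemma~\ref{l:mathfrakydpo}: the compact leaves of $\mathfrak{Y}$ are dense, so there is a compact leaf $L\subset \mathfrak{Y}$ meeting $U'$. This $L$ is a compact manifold, possibly with boundary contained in $\partial\mathfrak{Y}$. I then need to check that the image of $L$ under the gluing determines a compact leaf of $\mathfrak{Y}^\pm$ meeting $U$. There are two cases, depending on whether $L\cap\partial\mathfrak{Y}=\emptyset$ or not. If $L$ misses the boundary, then $q(L\times\{\varepsilon\})$ is itself a leaf of $\mathfrak{Y}^\pm$ (homeomorphic to $L$, hence compact) meeting $U$. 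If $L$ meets the boundary, then the leaf of $\mathfrak{Y}^\pm$ through $q(L\times\{\varepsilon\})$ is the double $q((L\times\{-\})\cup(L\times\{+\}))$, which is the topological double of the compact manifold with boundary $L$ and hence is also compact; moreover it contains $q(L\times\{\varepsilon\})$, so it still meets $U$.

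The only step that requires any care is verifying that the set I produce in $\mathfrak{Y}^\pm$ is indeed a single leaf of the doubled foliated space (and not a proper subset or union of leaves). This is just a local check near boundary points: the foliated chart structure on $\mathfrak{Y}^\pm$ is built by reflecting charts of $\mathfrak{Y}$ across $\partial\mathfrak{Y}$, so two plaques from $\mathfrak{Y}^-$ and $\mathfrak{Y}^+$ that share a boundary plaque of $\partial\mathfrak{Y}$ lie in the same leaf of $\mathfrak{Y}^\pm$, and plaques of $\mathfrak{Y}$ disjoint from $\partial\mathfrak{Y}$ embed unchanged. Since we have exhibited a compact leaf of $\mathfrak{Y}^\pm$ meeting an arbitrary non-empty open set $U$, density follows.
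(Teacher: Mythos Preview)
Your proposal is correct and follows essentially the same route as the paper: pull an open set back into one copy of $\mathfrak{Y}$, invoke Lemma~\ref{l:mathfrakydpo} to find a compact leaf (possibly with boundary) there, and then observe that its image in the double is either that leaf itself or its topological double, hence compact. You are slightly more explicit than the paper in justifying why the doubled leaf is a single leaf of $\mathfrak{Y}^\pm$, but the argument is the same.
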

\begin{proof}
Denote by $\pi\colon \mathfrak{Y}^- \sqcup \mathfrak{Y}^+ \to \mathfrak{Y}^\pm$ the quotient map.
Let $U$ be a open subset of $\mathfrak{Y}^\pm$, which without loss of generality we may assume contained in $\pi(\mathfrak{Y}^-\setminus \partial \mathfrak{Y}^-)$. By Lemma~\ref{l:mathfrakydpo}, there is a compact leaf $L^-$ in $\mathfrak{Y}^-$ intersecting $\pi^{-1}(U)$. If $L^-$ has empty boundary, then $L^-\cap \partial \mathfrak{Y}^- =\emptyset$ and $\pi(L^-)$ is  a compact leaf in the quotient space $\mathfrak{Y}^\pm$ intersecting $U$. If $L^-$ has non-empty boundary, then it is contained in the leaf $L=\pi(L^-\sqcup L^+)$, where $L^+$ is the leaf of $\mathfrak{Y}^+$ that corresponds to $L^-$. Then $L$ intersects $U$ and is a quotient of the compact space $L^-\sqcup L^+$, hence compact.
\end{proof}

\begin{lemma}
	$\mathfrak{Y}^\pm$ is topologically transitive but not sensitive to initial conditions.
\end{lemma}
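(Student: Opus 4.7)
The plan is to handle topological transitivity and failure of sensitivity separately, building on Corollary~\ref{c:mathfraky} and Proposition~\ref{p:widetildegnotsic}.

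For topological transitivity I would invoke Lemma~\ref{l:pt} and exhibit a dense leaf of $\mathfrak{Y}^\pm$. By Corollary~\ref{c:mathfraky} there is a point $p\in\mathfrak{Y}^-$ whose leaf $L^-$ is dense in $\mathfrak{Y}^-$; equivalently, its $\widetilde{\mathcal G}$-orbit is dense in $Y$. This orbit must meet the non-empty open set $Y\setminus\overline{\dom\tilde g}$ (which contains any ball $\mathcal V_n$ around a point of $\widetilde Q_n$ for $n\ge 1$), and consequently $L^-$ hits $\partial\mathfrak{Y}^-$. In the quotient $\mathfrak{Y}^\pm$, $L^-$ is therefore glued along each boundary component to its mirror leaf $L^+\subset\mathfrak{Y}^+$; since the two copies $\mathfrak{Y}^\pm$ are by construction foliated-isomorphic, $L^+$ is dense in $\mathfrak{Y}^+$, so the resulting leaf $L=L^-\cup L^+/{\sim}$ of $\mathfrak{Y}^\pm$ is dense in the whole double.

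For non-sensitivity I would take as preferred transversal point $q_0=((0,0),0)$ and appeal to Proposition~\ref{p:widetildegnotsic}. A model for the transversal space of $\mathfrak{Y}^\pm$ is $Z=Y^-\sqcup Y^+/{\sim}$, where $y^-\sim y^+$ for every $y$ in the identified boundary region $(Y\setminus\overline{\dom\tilde g})\cup(Y\setminus\overline{\im\tilde g})$. Because $\im\tilde g\subset\mathbb{T}^2\times\{n\ge 1\}$, the point $q_0$ lies in $Y\setminus\overline{\im\tilde g}$, so its two copies coincide in $Z$ and a sufficiently small neighborhood of $q_0$ in $Z$ is the identified copy of a neighborhood of $((0,0),0)$ in $Y$. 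The holonomy pseudogroup $\mathcal H$ of $\mathfrak{Y}^\pm$ on $Z$ is generated by the obvious copies $\tilde f^\pm,\tilde g^\pm$ of $\tilde f$ and $\tilde g$ acting on $Y^-$ and $Y^+$ respectively. I would now transport the metric $d$ and generating pseudo-*-group $\mathcal S$ of Proposition~\ref{p:widetildegnotsic} to $Z$ by taking the same metric $d$ on both copies (compatible with the identifications, which are by the identity on $Y$) and setting $\mathcal T=\mathcal S^-\cup\mathcal S^+$. Any element of $\mathcal T$ whose domain contains $q_0$ descends from an element of $\mathcal S$ whose domain contains $((0,0),0)$, hence is an isometry for $d$; this precludes any sensitivity constant at $q_0$ for the pair $(\mathcal T,d)$, so $\mathcal H$, and hence $\mathfrak{Y}^\pm$, fails to be sensitive.

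The main obstacle, I expect, is the bookkeeping in the non-sensitivity step: making precise the model $Z$ for the transversal of $\mathfrak{Y}^\pm$ and verifying that the natural copies $\tilde f^\pm,\tilde g^\pm$ (together with the boundary identifications absorbed into $Z$) really do generate the full $\mathcal H$, so that no new generator coming from a loop crossing the boundary appears outside $\mathcal S^-\cup\mathcal S^+$. Once this structural point is in hand, the isometry property at $q_0$ transfers cleanly from $Y$ to $Z$ because the metric on both sides is the same and the identifications are metric-preserving, finishing the proof.
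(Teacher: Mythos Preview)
Your transitivity argument is fine and matches the paper's, which argues directly at the pseudogroup level: the dense $\widetilde{\mathcal G}$-orbit in $Y^-$ meets the non-empty open domain of the crossing holonomy, so its orbit under the holonomy pseudogroup of the double contains the mirror orbit in $Y^+$ and is dense in $Y^-\sqcup Y^+$.

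For non-sensitivity, however, your quotient model $Z=Y^-\sqcup Y^+/{\sim}$ does not work. The identified region $U=(Y\setminus\overline{\dom\tilde g})\cup(Y\setminus\overline{\im\tilde g})$ is open but not closed: since $\dom\tilde g$ and $\im\tilde g$ are already closed, one computes $Y\setminus U=\bigcup_{n\ge 1}(\mathbb T^2\setminus\mathcal V_n)\times\{n\}$, and any point $p\in\partial\mathcal V_n\times\{n\}$ with $n\ge 1$ lies in $\partial U$. The two copies $p^-,p^+$ then survive in $Z$ but cannot be separated, because every neighbourhood of either meets $U$. Thus $Z$ is non-Hausdorff and cannot serve as the Polish transversal of a foliated space in the sense of Section~\ref{ss:foliated}; your metric is also underspecified between non-identified points of $Y^-$ and $Y^+$. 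So the ``absorbing the boundary identifications into $Z$'' idea, which you correctly flag as the crux, does not go through as stated.

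The paper's remedy is to keep the transversal as the genuine disjoint union $Y^-\sqcup Y^+$ and to make the crossing holonomy an \emph{explicit} extra generator
\[
\tilde h\colon (n,t,-)\mapsto (n,t,+),\qquad \dom\tilde h=(Y^-\setminus\overline{\dom\tilde g^-})\cup(Y^-\setminus\overline{\im\tilde g^-}).
\]
The metric $d^\pm$ is taken to equal $d$ on each copy and $\infty$ between copies. Since $\tilde h$ is the identity on coordinates, it is automatically a $d^\pm$-isometry, and the pseudo{\textasteriskcentered}group generated by $\mathcal S^-\cup\mathcal S^+\cup\{\tilde h,\tilde h^{-1}\}$ therefore consists of isometries at $((0,0),0,-)$ by Proposition~\ref{p:widetildegnotsic}. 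This is precisely the ``new generator coming from a loop crossing the boundary'' that you anticipated; once it is named rather than quotiented away, the isometry check is immediate.
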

\begin{proof}
    Let's inspect the holonomy pseudogroup of $\mathfrak{Y}^\pm$. Since $Y\cong \{x_0\}\times Y$ was a total transversal for $\mathfrak{Y}$, for $\mathfrak{Y}\pm$ we can take $Y^\pm \equiv Y^-\cup Y^+$, where $Y^-$ and $Y^+$ denote the image by $\pi$ of copies of $\{x_0\}$ in $\mathfrak{Y}^-$ and $\mathfrak{Y}^+$, respectively. On each of $Y^-$ and $Y^+$ we have copies of $\widetilde{\mathcal{G}}$, denoted by $\widetilde{\mathcal{G}}^-$ and $\widetilde{\mathcal{G}}^+$. The maps between $Y^-$ and $Y^+$ come from the gluing of the boundaries of $\mathfrak{Y}^-$ and $\mathfrak{Y}^+$. Recall that $Y= \mathbb{T}^2\times\mathbb{N}$. Loooking at~\eqref{eq:Yboundary}, we see that, if we denote the points of  $Y^-$ and  $Y^+$ by $(y,-)$ and $(y,+)$, respectively, we get a new map $h$ defined by
    \begin{align*}
    \dom h &= \{(y,-)\mid y \in Y^-\setminus (\overline{\dom \tilde g^-} \cap \overline{\im \tilde g^-})\},\\
    \im h &= \{(y,+)\mid y \in Y^+\setminus (\overline{\dom \tilde g^+} \cap \overline{\im \tilde g^+})\},\\
    h(y,-)&=(y,+),
    \end{align*}
    where  $\tilde g^-$ and $\tilde g^+$ denote the respective copies of  $\tilde g$ acting on $Y^-$ and $Y^+$.

    Denote by $\widetilde{G}^\pm$ the pseudogroup acting on $Y^\pm$ generated by $\widetilde{G}^-$, $\widetilde{G}^+$, and $h$, which is then the holonomy pseudogroup of $\mathfrak{Y}^\pm$.
	Let us prove that is topologically transitive. By Corollary~\ref{c:mathfraky}, there is a dense orbit $\widetilde{\mathfrak{G}}(y)$ in $Y$. Then the orbit $\widetilde{\mathfrak{G}}^\pm(y,-)$ contains $\widetilde{\mathfrak{G}}^-(y,-)$, which is dense in $Y^-$, and similarly $\widetilde{\mathfrak{G}}^\pm(y,+)$ is dense in $Y^+$. Since $\dom \tilde g$ was a disjoint union of open balls in $Y$,  $\widetilde{\mathfrak{G}}^-(y,-)$ meets $\dom h$, and we get \[\widetilde{\mathfrak{G}}^\pm(y,-) = \widetilde{\mathfrak{G}}^-(y,-)\cup \widetilde{\mathfrak{G}}^+(y,+),\]
	which is dense in $Y^\pm$.
	
	Finally, let us prove that $\widetilde{\mathcal{G}}^\pm$ is not sensitive to initial conditions. By Proposition~\ref{p:widetildegnotsic}, there is a metric $d$ on $Y$, a point $y\in Y$ and a generating pseudo{\textasteriskcentered}group $S$ for $\widetilde{\mathcal G}$ such that every map of $S$ whose domain contains $y$ is an isometry. 
	Let $d^\pm$ be metric on $Y^\pm$ such that points from $Y^-$ and $Y^+$ are at infinite distance of each other, and the restrictions of $d^\pm$ to $Y^-$ and $Y^+$ coincide with $d$. Finally, let $S^\pm=S^-\cup S^+\cup \{ h\}$, where $S^-$ and $S^+$ are copies of $S$ acting on $Y^-$ and $Y^+$, respectively. It is immediate that $S^\pm$ generates $\widetilde{\mathcal{G}}^\pm$ and  every map in $S^\pm$ whose domain contains $(y,-)$ is an isometry with respect to $d^\pm$, and the result follows.
\end{proof}

The pseudogroup $\mathcal{G}\acts Y$ was affine, and it is easily checked that so is $\widetilde{\mathcal G}^\pm\acts Y^\pm$, yielding that $\mathfrak{Y}^\pm$ is a transversally affine foliation. This completes the proof of Theorem~\ref{t:counterfol}.

\subsection*{Acknowledgements}
I would like to thank the anonymous referee for their thoughtful suggestions, which helped improved the paper.

\end{document}